\author{Geoffrey Powell}
\title{Homological splitting results for modules over Leibniz algebras}
\address{Univ Angers, CNRS, LAREMA, SFR MATHSTIC, F-49000 Angers, France}
\email{Geoffrey.Powell@math.cnrs.fr}
\urladdr{http://math.univ-angers.fr/~powell/}
\date{}
\thanks{This work was partially supported by the ANR Project {\em ChroK}, {\tt ANR-16-CE40-0003}.}
\newtheorem{THM}{Theorem}
\newtheorem{thm}{Theorem}[section]
\newtheorem{prop}[thm]{Proposition}
\newtheorem{cor}[thm]{Corollary}
\newtheorem{lem}[thm]{Lemma}
\theoremstyle{definition}
\newtheorem{defn}[thm]{Definition}
\newtheorem{exam}[thm]{Example}
\theoremstyle{remark}
\newtheorem{rem}[thm]{Remark}
\newtheorem{nota}[thm]{Notation}
\newcommand{\kring}{\mathbbm{k}}
\newcommand{\g}{\mathfrak{g}}
\newcommand{\h}{\mathfrak{h}}
\newcommand{\glie}{{\g_{\mathrm{Lie}}}}
\newcommand{\alg}{\mathrm{Alg}}
\newcommand{\leibalg}{\alg^{\mathsf{Leib}}}
\newcommand{\liealg}{\alg^{\mathsf{Lie}}}
\newcommand{\uassalg}{\alg^{\mathsf{uAss}}}
\newcommand{\leibmod}{\mathrm{Mod}^{\mathsf{Leib}}}
\newcommand{\liemod}{\mathrm{Mod}^{\mathsf{Lie}}}
\newcommand{\uassmod}{\mathrm{Mod}}
\newcommand{\op}{^\mathrm{op}}
\newcommand{\modules}{\mathrm{Mod}}
\newcommand{\nat}{\mathbb{N}}
\renewcommand{\hom}{\mathrm{Hom}}
\newcommand{\id}{\mathrm{Id}}
\newcommand{\leibsym}{^{\mathrm{s}}}
\newcommand{\leibasym}{^{\mathrm{a}}}
\newcommand{\ext}{\mathrm{Ext}}
\newcommand{\lbs}{\mathbf{sym}}
\newcommand{\lbas}{\mathbf{asym}}
\newcommand{\ob}{\mathrm{Ob}\ }
\newcommand{\sinv}[1][]{^{\mathbf{sym}#1}}
\newcommand{\asinv}[1][]{^{\mathbf{asym}#1}}
\newcommand{\rreg}{^\downarrow}
\newcommand{\lext}[1][\mathfrak{g}]{\mathscr{E}_l^{#1}}
\newcommand{\rext}[1][\mathfrak{g}]{\mathscr{E}_r^{#1}}
\newcommand{\triv}{^{\mathrm{triv}}}
\newcommand{\kurd}{^\flat}
\newcommand{\tor}{\mathrm{Tor}}
\renewcommand{\phi}{\varphi}
\renewcommand{\epsilon}{\varepsilon}
\begin{document}

\begin{abstract}
A unified splitting result for Ext calculated in the category of modules over a Leibniz algebra is given for the case where coefficients are either both symmetric modules or both antisymmetric modules. This is a generalization of results of Loday and Pirashvili and others.
\end{abstract}

\maketitle

\section{Introduction}

Leibniz algebras are `non-commutative algebra' generalizations of Lie algebras. Given a Leibniz algebra $\g$, there is a universal surjection $\g \twoheadrightarrow \glie$ to a Lie algebra; the homological relationship between the category $\leibmod_\g$ of $\g$-modules and the category $\liemod_\glie$ of  right $\glie$-modules  is of significant interest, even when $\g = \glie$,  as in Loday and Pirashvili's work \cite{MR1383474}.  

There are two natural ways in which a right $\glie$-module $N$ can be considered as a $\g$-module, either by forming the associated {\em symmetric} $\g$-module $N\leibsym$ or the {\em antisymmetric} module $N\leibasym$. These are of fundamental importance in studying $\g$-modules: as observed by Loday and Pirashvili \cite{LP}, they allow the {\em dévissage} of the category of $\g$-modules. Explicitly, a $\g$-module $M$ fits into a natural short exact sequence 
\[
0
\rightarrow 
M_0 
\rightarrow 
M 
\rightarrow 
M/M_0 
\rightarrow 
0
\]
where $M_0$ is antisymmetric and $M/M_0$ is symmetric.

The functor $(-)\leibsym$ has a left adjoint $\lbs$ and a right adjoint $(-)\sinv$;  likewise, $(-)\leibasym$ has a left adjoint $\lbas$ and a right adjoint $(-)\asinv$. Their derived functors are important; for instance, the left derived functors $\mathbb{L}_* \lbs$ yield Leibniz homology $HL_* (\g; -)$ and the right derived functors $\mathbb{R}^*(-)\asinv$ are isomorphic to Leibniz cohomology $HL^* (\g; -)$, by the main result of \cite{LP}. 

Loday and Pirashvili \cite{MR1383474} initiated the study of the relationship between (co)homological algebra with antisymmetric coefficients and with symmetric coefficients. Feldvoss and Wagemann \cite{FW} used this to show that, for $\g$  a Leibniz algebra and $N$ a $\glie$-module,  $HL^0 (\g; N\leibasym) \cong N$ and $HL^n(\g; N\leibasym) \cong HL^{n-1}(\g ; \hom_{\kring} (\g\rreg, N)\leibsym)$ for $n>0$, where $\g\rreg$ denotes $\g$ considered as a right $\glie$-module.

This paper  generalizes techniques pioneered by Loday and Pirashvili, placing them in a unified setting. This  uses classes 
\begin{eqnarray*}
{} [\lext (N)] &\in &\ext^1_{\leibmod_\g}(N\leibsym, (N \otimes \g\rreg)\leibasym)
 \\
{} [\rext(N)]& \in &\ext^1_{\leibmod_\g}(\hom_\kring (\g\rreg, N)\leibsym, N\leibasym )
 \end{eqnarray*}
 derived from \cite{MR1383474}, where $N$ is a right $\glie$-module.

The main result is:

\begin{THM}
\label{THM}
(Theorem \ref{thm:reduction}.)
For $\g$ a Leibniz algebra and $ N_1, N_2$ right  $\glie$-modules, 
\begin{enumerate}
\item 
there is a natural isomorphism
\[
\ext^* _{\leibmod_\g}(N_1\leibsym, N_2 \leibsym )
\cong 
\ext^*_{\liemod_\glie} (N_1 , N_2) 
\oplus
\ext^{*-1} _{\leibmod_\g}((N_1 \otimes \g\rreg)\leibasym, N_2 \leibsym )
\]
and the inclusion $\ext^{*-1} _{\leibmod_\g}((N_1 \otimes \g\rreg)\leibasym,  N_2  \leibsym )
\hookrightarrow \ext^* _{\leibmod_\g}( N_1 \leibsym,  N_2  \leibsym )
$ is the Yoneda product with   $[\lext (N_1)]$;
\item 
there is a natural isomorphism
\[
\ext^* _{\leibmod_\g}( N_1 \leibasym,  N_2  \leibasym )
\cong 
\ext^*_{\liemod_\glie} (N_1 , N_2) 
\oplus
\ext^{*-1} _{\leibmod_\g}( N_1 \leibasym, \hom_\kring (\g\rreg, N_2) \leibsym )
\]
and the inclusion $\ext^{*-1} _{\leibmod_\g}( N_1 \leibasym, \hom_\kring (\g\rreg, N_2) \leibsym )
\hookrightarrow \ext^* _{\leibmod_\g}( N_1 \leibasym,  N_2  \leibasym )
$ is the Yoneda product with  $[\rext(N_2)]$.
\end{enumerate}
\end{THM}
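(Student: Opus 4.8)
The plan is to deduce both isomorphisms from the long exact sequences attached to the defining short exact sequences of the classes $[\lext(N_1)]$ and $[\rext(N_2)]$, the only substantial input being the identification, up to a degree shift, of one auxiliary $\ext$-group with $\ext^*_{\liemod_\glie}(N_1,N_2)$; everything else is formal.

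For (1), fix a short exact sequence of $\g$-modules $0\to(N_1\otimes\g\rreg)\leibasym\to\mathscr{E}\to N_1\leibsym\to 0$ representing $[\lext(N_1)]$; its sub is the maximal antisymmetric submodule and its quotient the symmetric module, by the dévissage of \cite{LP}. Applying $\ext^*_{\leibmod_\g}(-,N_2\leibsym)$ produces a long exact sequence whose connecting homomorphism $\ext^{*}_{\leibmod_\g}((N_1\otimes\g\rreg)\leibasym,N_2\leibsym)\to\ext^{*+1}_{\leibmod_\g}(N_1\leibsym,N_2\leibsym)$ is, by definition, the Yoneda product with $[\lext(N_1)]$. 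The key lemma to establish is \textbf{(i)}: a natural isomorphism $\ext^*_{\leibmod_\g}(\mathscr{E},M\leibsym)\cong\ext^*_{\liemod_\glie}(N_1,M)$ for all $\glie$-modules $M$, under which the restriction map along $\mathscr{E}\twoheadrightarrow N_1\leibsym$, precomposed with the functorial map $(-)\leibsym\colon\ext^*_{\liemod_\glie}(N_1,N_2)\to\ext^*_{\leibmod_\g}(N_1\leibsym,N_2\leibsym)$, becomes the identity of $\ext^*_{\liemod_\glie}(N_1,N_2)$. Granting (i), $(-)\leibsym$ splits the map $\ext^*_{\leibmod_\g}(N_1\leibsym,N_2\leibsym)\to\ext^*_{\leibmod_\g}(\mathscr{E},N_2\leibsym)$, which is therefore surjective; exactness then forces the next restriction map $\ext^*_{\leibmod_\g}(\mathscr{E},N_2\leibsym)\to\ext^*_{\leibmod_\g}((N_1\otimes\g\rreg)\leibasym,N_2\leibsym)$ to vanish in every degree, so the long exact sequence collapses to the split short exact sequences
\[
0\to\ext^{*-1}_{\leibmod_\g}((N_1\otimes\g\rreg)\leibasym,N_2\leibsym)\xrightarrow{\cup[\lext(N_1)]}\ext^*_{\leibmod_\g}(N_1\leibsym,N_2\leibsym)\to\ext^*_{\liemod_\glie}(N_1,N_2)\to 0 .
\]
Naturality in $N_1,N_2$ is inherited from naturality of the long exact sequence, of $N\mapsto[\lext(N)]$, and of (i). Part (2) is the formal dual: apply $\ext^*_{\leibmod_\g}(N_1\leibasym,-)$ to a representative $0\to N_2\leibasym\to\rext(N_2)\to\hom_\kring(\g\rreg,N_2)\leibsym\to 0$ of $[\rext(N_2)]$, use the analogue $\ext^*_{\leibmod_\g}(N_1\leibasym,\rext(N_2))\cong\ext^*_{\liemod_\glie}(N_1,N_2)$ of (i), and observe that the exact functor $(-)\leibasym$ supplies the splitting.

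The hard part will be (i) and its dual, i.e.\ computing $\ext^*_{\leibmod_\g}(\lext(N),M\leibsym)$; this is where the construction of the classes from \cite{MR1383474} must be exploited. Using the adjunction $\lbs\dashv(-)\leibsym$ there is a hyper-$\ext$ (Grothendieck-type) spectral sequence $\ext^p_{\liemod_\glie}(HL_q(\g;\lext(N)),M)\Rightarrow\ext^{p+q}_{\leibmod_\g}(\lext(N),M\leibsym)$, legitimate because the left adjoint $\lbs$ preserves projectives; so it suffices to prove that $\lext(N)$ is $\lbs$-acyclic with $\lbs(\lext(N))\cong N$, i.e.\ $HL_0(\g;\lext(N))\cong N$ and $HL_{>0}(\g;\lext(N))=0$. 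This acyclicity I would verify by feeding the short exact sequence $0\to(N\otimes\g\rreg)\leibasym\to\lext(N)\to N\leibsym\to 0$ into the long exact sequence for Leibniz homology and invoking the known values of $HL_*(\g;-)$ on symmetric and on antisymmetric coefficients (the homological counterpart of the Loday--Pirashvili / Feldvoss--Wagemann computations quoted in the introduction): the point is that the connecting maps must be isomorphisms in all positive degrees, which is precisely the property the class $[\lext(N)]$ was built to have. Dually, (i$'$) uses the right adjoint $(-)\asinv$ of $(-)\leibasym$ --- whose derived functors are Leibniz cohomology --- giving $\ext^p_{\liemod_\glie}(N_1,HL^q(\g;\rext(N_2)))\Rightarrow\ext^{p+q}_{\leibmod_\g}(N_1\leibasym,\rext(N_2))$, and the needed acyclicity $HL^0(\g;\rext(N_2))\cong N_2$, $HL^{>0}(\g;\rext(N_2))=0$ is essentially the Feldvoss--Wagemann computation rephrased. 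Pinning down this acyclicity, together with the edge-map compatibility required for the ``identity'' clause of (i), is the genuinely technical heart of the proof; the long exact sequence, its degeneration, the splitting and naturality, and the duality carrying (1) to (2), are then formal.
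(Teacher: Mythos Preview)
Your outline is essentially the paper's proof: reduce to the long exact sequence of the defining extension, identify the middle term $\ext^*_{\leibmod_\g}(\lext(N_1),N_2\leibsym)$ (resp.\ $\ext^*_{\leibmod_\g}(N_1\leibasym,\rext(N_2))$) with $\ext^*_{\liemod_\glie}(N_1,N_2)$ via a Grothendieck spectral sequence collapsed by the $\lbs$-acyclicity of $\lext(N_1)$ (resp.\ $(-)\asinv$-acyclicity of $\rext(N_2)$), and split using the exact functor $(-)\leibsym$ (resp.\ $(-)\leibasym$). This is exactly Proposition~\ref{prop:alternative_lext_rext} followed by the proof of Theorem~\ref{thm:reduction}.

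The one substantive difference is in how you propose to establish the $\lbs$-acyclicity of $\lext(N)$. You plan to feed the defining short exact sequence into the long exact sequence for $\mathbb{L}_*\lbs$ and use the known degree-shift isomorphism $\mathbb{L}_{*}\lbs(N\leibsym)\cong N\oplus\mathbb{L}_{*-1}\lbs((N\otimes\g\rreg)\leibasym)$ to conclude that the connecting maps are isomorphisms. But knowing the source and target are abstractly isomorphic does not by itself make a given map an isomorphism; you would still have to compute the connecting map explicitly and match it to the splitting of Proposition~\ref{prop:lder_sym_asym}, which is real work. The paper sidesteps this entirely: it observes (Lemma~\ref{lem:lext_Uglie}) that $\lext(U(\glie))\cong UL(\g)$ is projective, hence $\lbs$-acyclic, and then obtains the general case by dimension-shifting along a free resolution of $N$ in $\liemod_\glie$ (Proposition~\ref{prop:lext_deriv_lbs}). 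For $\rext$, by contrast, the paper does proceed essentially as you describe, reducing to the Feldvoss--Wagemann computation of the connecting map (Proposition~\ref{prop:rext_deriv_asinv}).
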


Since Leibniz cohomology $HL^*(\g; M)$ with coefficients in a $\g$-module $M$ is isomorphic to $
\ext^* _{\leibmod_\g}( (U(\glie) \leibasym, M )$ by \cite{LP}, taking $N_1 = U(\glie)$ in the second isomorphism recovers the aforementioned isomorphism for Leibniz cohomology.

Theorem \ref{THM}  has the following conceptual interpretation: the extension classes $[\lext (N)]$ and $[\rext(N)]$ induce split distinguished triangles in the derived category of right $\glie$-modules:
\begin{eqnarray*}
&& N 
\rightarrow 
\mathbb{L}\lbs (N^s)
\rightarrow 
\mathbb{L}\lbs  ((N \otimes \g\rreg)\leibasym) [1]
\rightarrow 
\\
&&
\mathbb{R} (-)\asinv\big(\hom_\kring (\g \rreg, N) \leibsym \big)[-1]
 \rightarrow 
 \mathbb{R} (-)\asinv(N \leibasym)
  \rightarrow
 N
\rightarrow ,
\end{eqnarray*}
using the isomorphisms $\mathbb{L}\lbs (\lext(N)) \cong N$ and $\mathbb{R} (-)\asinv(\rext (N)) \cong N$ given by Propositions \ref{prop:lext_deriv_lbs} and \ref{prop:rext_deriv_asinv} respectively. 

There is an analogous result for $\tor$, which is defined using the fact that the category $\leibmod_\g$ is equivalent to the category of right $UL(\g)$-modules, where $UL(\g)$ is the enveloping algebra constructed by Loday and Pirashvili \cite{LP}. Hence one can consider $-\otimes_{UL(\g)} - $ as a bifunctor on the product of the categories of right and left $UL(\g)$-modules, together with its derived functors. 

The functor $(-)\leibsym$ corresponds to restriction along the morphism of algebras $d_1 : UL(\g)\rightarrow U(\glie)$ introduced in \cite{LP}; likewise $(-)\leibasym$ corresponds to restriction along $d_0 : UL(\g)\rightarrow U(\glie)$ (this material is reviewed in  Sections \ref{sect:background} and \ref{sect:sym_asym}). The following statement  corresponds to taking both coefficients to be symmetric:

\begin{THM} 
\label{THM:tor}
(Theorem \ref{thm:tor}.) 
For $\g$ a Leibniz algebra, $M$ a right $\glie$-module and $N$ a left $\glie$-module, there is a natural isomorphism:
\[
\tor^{UL(\g)}_* (M^{d_1}, {}^{d_1}N) 
\cong 
\tor^{U(\glie)}_* (M, N)
\oplus 
 \tor^{UL(\g)}_{*-1} ((M\otimes \g\rreg)^{d_0}, {}^{d_1}N) 
\] 
where the morphism $\tor^{UL(\g)}_* (M^{d_1}, {}^{d_1}N)  \rightarrow \tor^{UL(\g)}_{*-1} ((M\otimes \g\rreg)^{d_0}, {}^{d_1}N) $ is induced by cap product with the class $[\lext (M)]$.
\end{THM}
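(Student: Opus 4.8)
The plan is to apply the derived tensor product $-\otimes^{\mathbb{L}}_{UL(\g)}{}^{d_1}N$ to the short exact sequence of right $UL(\g)$-modules representing $[\lext(M)]$ and to recognise the resulting distinguished triangle as the image, under $-\otimes^{\mathbb{L}}_{U(\glie)}N$, of the split triangle coming from Theorem \ref{thm:reduction}(1). I would begin with a short exact sequence of right $UL(\g)$-modules
\[
0 \to (M\otimes\g\rreg)^{d_0} \to \lext(M) \to M^{d_1} \to 0
\]
whose class in $\ext^1_{UL(\g)}(M^{d_1},(M\otimes\g\rreg)^{d_0})$ is $[\lext(M)]$, and apply $-\otimes^{\mathbb{L}}_{UL(\g)}{}^{d_1}N$. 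This gives a distinguished triangle in the derived category of right $\glie$-modules, hence a long exact sequence in $\tor^{UL(\g)}_*(-,{}^{d_1}N)$, whose connecting homomorphism $\tor^{UL(\g)}_*(M^{d_1},{}^{d_1}N)\to\tor^{UL(\g)}_{*-1}((M\otimes\g\rreg)^{d_0},{}^{d_1}N)$ is, by the standard description of connecting maps, the cap product with $[\lext(M)]$.

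Next I would identify the three vertices. Writing ${}^{d_1}N\cong{}^{d_1}U(\glie)\otimes_{U(\glie)}N$, where ${}^{d_1}U(\glie)$ is $U(\glie)$ viewed as a $(UL(\g),U(\glie))$-bimodule via $d_1$ and is free, hence flat, as a right $U(\glie)$-module, associativity of the derived tensor product yields a natural isomorphism $X\otimes^{\mathbb{L}}_{UL(\g)}{}^{d_1}N\cong\mathbb{L}\lbs(X)\otimes^{\mathbb{L}}_{U(\glie)}N$ for every right $UL(\g)$-module $X$, since the extension-of-scalars functor $-\otimes_{UL(\g)}{}^{d_1}U(\glie)$ is the left adjoint $\lbs$ of $(-)\leibsym$. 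Feeding the three terms of the sequence into this and using $\mathbb{L}\lbs(\lext(M))\cong M$ from Proposition \ref{prop:lext_deriv_lbs}, the triangle above becomes $-\otimes^{\mathbb{L}}_{U(\glie)}N$ applied to
\[
\mathbb{L}\lbs((M\otimes\g\rreg)\leibasym)\to M\to\mathbb{L}\lbs(M\leibsym)\to\mathbb{L}\lbs((M\otimes\g\rreg)\leibasym)[1],
\]
the split distinguished triangle attached to $[\lext(M)]$ that follows from Theorem \ref{thm:reduction}(1) and Proposition \ref{prop:lext_deriv_lbs}. (If one prefers to re-derive splitness here, apply the derived adjunction $\mathbb{R}\hom_{D(\glie)}(\mathbb{L}\lbs(M\leibsym),-)\cong\mathbb{R}\hom_{D(\leibmod_\g)}(M\leibsym,(-)\leibsym)$ to Theorem \ref{thm:reduction}(1); the resulting natural splitting exhibits the counit $\mathbb{L}\lbs(M\leibsym)\to M$ as a retraction of the map $M\to\mathbb{L}\lbs(M\leibsym)$.)

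To conclude, a split triangle remains split under the additive functor $-\otimes^{\mathbb{L}}_{U(\glie)}N$, so taking homology — and invoking the natural isomorphism of the previous step once more for $X=M\leibsym$ and for $X=(M\otimes\g\rreg)\leibasym$, with the middle vertex contributing $\tor^{U(\glie)}_*(M,N)$ — produces the asserted natural isomorphism
\[
\tor^{UL(\g)}_*(M^{d_1},{}^{d_1}N)\cong\tor^{U(\glie)}_*(M,N)\oplus\tor^{UL(\g)}_{*-1}((M\otimes\g\rreg)^{d_0},{}^{d_1}N).
\]
Because the splitting of the triangle is the one induced by the defining short exact sequence of $\lext(M)$ (it is obtained by applying $\mathbb{L}\lbs$ to that sequence), the projection onto the second summand agrees with the connecting homomorphism of the first step, namely cap product with $[\lext(M)]$.

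The main obstacle is not homological but organisational: I would need to verify that the associativity isomorphism $X\otimes^{\mathbb{L}}_{UL(\g)}{}^{d_1}N\cong\mathbb{L}\lbs(X)\otimes^{\mathbb{L}}_{U(\glie)}N$ is natural in $X$ and compatible with the connecting maps of short exact sequences — so that the cap-product description survives all the identifications — and that the abstract splitting supplied by Theorem \ref{thm:reduction}(1) is exactly the one induced by the distinguished triangle of the defining extension. Once this is in place, the remaining content reduces to additivity of $H_*$ on a split triangle, so no finiteness or boundedness hypotheses on $M$ or $N$ are required.
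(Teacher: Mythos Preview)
Your proposal is correct and follows essentially the same route as the paper: the paper's proof of Theorem~\ref{thm:tor} says only ``analogous to Theorem~\ref{thm:reduction}, \emph{mutatis mutandis},'' and your argument is precisely the $\tor$-analogue of the derived-category proof sketched in the Remark following Theorem~\ref{thm:reduction} --- apply $\mathbb{L}\lbs$ to the defining extension of $\lext(M)$, invoke Proposition~\ref{prop:lext_deriv_lbs} to collapse the middle term to $M$, split, then tensor with $N$ over $U(\glie)$. The only cosmetic difference is that the paper's formal proof of Theorem~\ref{thm:reduction} packages the step $X\otimes^{\mathbb{L}}_{UL(\g)}{}^{d_1}N\cong\mathbb{L}\lbs(X)\otimes^{\mathbb{L}}_{U(\glie)}N$ as a Grothendieck spectral sequence (Proposition~\ref{prop:alternative_lext_rext}) rather than as associativity of the derived tensor product.
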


The associated result for antisymmetric coefficients is deduced from Theorem \ref{THM:tor} by using the involution 
$\chi_L : UL(\g)\op \stackrel{\cong}{\rightarrow} UL (\g)$ exhibited by Kurdiani \cite{MR1709257}. This induces an equivalence of categories between right $UL(\g)$-modules and left $UL(\g)$-modules that exchanges the rôles of  $d_0$ and $d_1$.

The proofs of the main results rely upon explicit models for the derived functors $\mathbb{L} \lbs$ and $\mathbb{R}(-)\asinv$ together with $\mathbb{L} \lbas$ and $\mathbb{R}(-)\sinv$. These are obtained from the Leibniz complex equipped with an explicit $\glie$-action going back to the foundational work of Loday and Pirashvili \cite{LP}; the Kurdiani involution is used to transport these  to resolutions in `left modules'.

\subsection{Conventions and Notation}

\begin{enumerate}
\item 
Throughout, $\kring$ is taken to be a field; all tensor products $\otimes$ are formed over $\kring$.
\item 
For $M$ a $\kring$-vector space,  $\langle x | x \in \mathcal{I} \rangle _\kring \subset M$ denotes the subspace generated by the set of elements $\mathcal{I} \subset M$. 
\item 
Homological conventions are used: $[1]$ always denotes homological suspension (so that $[-1]$ denotes cohomological suspension).
\item 
When considered as a complex, an object of an abelian category is placed in homological degree zero.
\end{enumerate}

\subsection{Acknowledgement}
This work forms part of a  project that was inspired by questions raised by Friedrich Wagemann; the author is grateful for his interest. 

The author is  also very grateful to  Teimuraz Pirashvili both for his interest and his comments. In particular, Pirashvili alerted the author to the existence of Kurdiani's involution \cite{MR1709257}, which allowed the author's original approach to constructing resolutions for `left modules' to be replaced by a reduction to Loday and Pirashvili's work \cite{LP}.

\tableofcontents

\section{Background}
\label{sect:background}

This Section reviews Leibniz algebras and their modules.

\subsection{Leibniz algebras and their modules}
\label{subsect:alg_mod}

Much of the following material can be found in \cite{LP} and in  \cite{Feld}, to which the reader is referred for more detail. All Leibniz algebras considered here are {\em right} Leibniz algebras.

\begin{defn}
\label{defn:Leibniz}
A  Leibniz algebra over $\kring$ is a $\kring$-vector space $\g$ equipped with a product $\g \otimes \g \rightarrow \g$, $x \otimes y \mapsto xy$, that satisfies the  Leibniz relation:
\[
x(yz) = (xy)z - (xz) y.
\]
A morphism of Leibniz algebras $\g_1 \rightarrow \g_2$ is a morphism of $\kring$-vector spaces that is compatible with the respective products. The category of Leibniz algebras is denoted $\leibalg$.  
\end{defn}

A Lie algebra is a Leibniz algebra such that the product is antisymmetric; in particular, there is a fully-faithful inclusion $
\liealg 
\hookrightarrow 
\leibalg
$ of the category of Lie algebras. This has  left adjoint given by $\g \mapsto \glie$, where
$
\glie := \g / \langle x^2 | x \in \g \rangle_\kring
$. The image of $y \in \g$ under the canonical surjection of Leibniz algebras $\g \twoheadrightarrow \glie$ is written $\overline{y}$.

\begin{defn}
\label{defn:module_leibniz}
\cite{LP}
A module over a Leibniz algebra $\g$ is a $\kring$-vector space  $M$ equipped with left and right actions $
\g \otimes M  \rightarrow  M$,  
$
M \otimes \g  \rightarrow  M$ 
that satisfy the following relations, $\forall x, y \in \g, \ m \in M$:
$$
\begin{array}{llll}
m (xy) &=& (mx)y - (my)x& \ \  (MLL) \\
x (my) &=& (xm)y - (xy)m& \ \  (LML) \\
x (ym) &=&(xy)m - (xm)y& \ \ (LLM). 
\end{array}
$$
A morphism $M_1 \rightarrow M_2$ of $\g$-modules is a morphism of $\kring$-vector spaces that is compatible with the respective left and right actions.  The category of $\g$-modules is written $\leibmod_\g$.

A $\g$-module $M$ is
\begin{enumerate}
\item 
 {\bf symmetric} if $mx + xm= 0$ $\forall (m,x )  \in M \times \g$;
 \item 
 {\bf antisymmetric} if $xm=0$ $\forall (m,x )  \in M \times \g$.
\end{enumerate}
\end{defn}

\begin{nota}
For $\h$ a Lie algebra, $\liemod_\h$ denotes the category of right $\h$-modules.
\end{nota}

The following is standard  (see \cite[Section 1.10]{LP} and  \cite[Section  3]{Feld}, for example) and also serves to introduce the notation $(-)\rreg$.

\begin{prop}
\label{prop:right-restriction}
Let $\g$ be a Leibniz algebra. 
\begin{enumerate}
\item 
The restriction to the right action induces an exact restriction functor 
$
(-)\rreg : 
\leibmod_\g
\rightarrow 
\liemod_\glie. 
$
\item 
The full subcategory of antisymmetric $\g$-modules in $\leibmod_\g$ is equivalent to $\liemod_\glie$.  
\item 
The full subcategory of symmetric $\g$-modules in $\leibmod_\g$ is equivalent to $\liemod_\glie$.  
\end{enumerate}
\end{prop}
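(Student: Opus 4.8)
The statement to prove is Proposition~\ref{prop:right-restriction}, which has three parts: exactness of the right-restriction functor, and the two equivalences of categories with $\liemod_\glie$ coming from the antisymmetric and symmetric full subcategories.

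\medskip

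The plan is to verify each part directly from the defining relations (MLL), (LML), (LLM). First, for part (1): given a $\g$-module $M$, the vector space $M$ with its right action $M\otimes\g\to M$ is already a module over the Lie algebra $\g$ in the sense of right Leibniz actions, by relation (MLL) — this is exactly the statement that the right action satisfies $m(xy)=(mx)y-(my)x$. What remains is to check that this right action descends along $\g\twoheadrightarrow\glie$, i.e.\ that $mx^2=0$ for all $x\in\g$; this follows by setting $x=y$ in (MLL), which gives $m(x^2)=(mx)x-(mx)x=0$. Hence the right action factors through $\glie$ and makes $M$ a genuine right $\glie$-module. Functoriality is immediate since a morphism of $\g$-modules commutes with right actions, and exactness is clear because the underlying functor on $\kring$-vector spaces is the identity and kernels/cokernels of $\g$-module maps are computed on underlying vector spaces.

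\medskip

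For part (2): I would define a functor $\liemod_\glie\to\leibmod_\g$ sending a right $\glie$-module $N$ to the $\g$-module $N\leibasym$ with the given right action (via $\g\twoheadrightarrow\glie$) and zero left action; one checks (MLL), (LML), (LLM) hold — (MLL) is the $\glie$-module axiom (using $mx^2=0$), while (LML) and (LLM) are trivially satisfied since every term containing a left action vanishes. Composing with $(-)\rreg$ recovers $N$ on the nose. Conversely, if $M$ is antisymmetric then its left action is zero by definition, so $M$ is determined by its underlying right $\glie$-module $M\rreg$, and a $\g$-module map between antisymmetric modules is the same as a $\glie$-module map; this gives the inverse equivalence. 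For part (3): I would use the assignment $N\mapsto N\leibsym$, where $N\leibsym$ has underlying space $N$, right action through $\glie$, and left action defined by $xm:=-mx$ (forced by the symmetry condition $xm+mx=0$). The key verification is that this left/right pair satisfies (MLL), (LML), (LLM); each reduces, after substituting $xm=-mx$, to the single $\glie$-module relation $m(xy)=(mx)y-(my)x$ together with $mx^2=0$ and the antisymmetry $\overline{xy}=-\overline{yx}$ in $\glie$ — so all three Leibniz module relations collapse to consequences of the $\glie$-action. Conversely, a symmetric $\g$-module has its left action determined by its right action via $xm=-mx$, and $\g$-module morphisms between symmetric modules coincide with $\glie$-module morphisms, yielding the equivalence.

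\medskip

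The main obstacle is purely bookkeeping: checking that the symmetric construction $N\mapsto N\leibsym$ genuinely lands in $\leibmod_\g$, i.e.\ that relations (LML) and (LLM) hold once one imposes $xm=-mx$. This requires carefully expanding both sides using the substitution and the Leibniz relation $x(yz)=(xy)z-(xz)y$ in $\g$ (or rather its image in $\glie$), and seeing the antisymmetry of the $\glie$-bracket conspire to make the identities hold. None of this is deep, but it is the only place where something could a priori go wrong; the exactness in (1) and the antisymmetric case in (2) are essentially formal.
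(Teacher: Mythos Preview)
Your proposal is correct and follows essentially the same approach as the paper: both use $(MLL)$ with $x=y$ to show the right action descends to $\glie$, and both establish the equivalences in (2) and (3) by constructing the functors $N\mapsto N\leibasym$ and $N\mapsto N\leibsym$ and checking the module axioms. The paper is slightly terser in part (3), emphasizing that for a symmetric module $(x^2)m=0$ follows from $m(x^2)=0$ and $xm+mx=0$, whereas you spell out more of the verification that $N\leibsym$ satisfies $(LML)$ and $(LLM)$; these are just the two directions of the same equivalence.
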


\begin{proof}
For the first statement, it suffices to show that $x^2 \in \g$ acts trivially on the right on any $\g$-module $M$. This follows from the relation 
$
m (x^2) = (mx)x - (mx)x =0
$
given by $(MLL)$.

For the second statement, if $M$ is a $\glie$-module, then equipping it with the trivial left $\g$-action, it can be considered as an antisymmetric $\g$-module. This induces the equivalence of categories, with quasi-inverse given by the restriction functor.

The symmetric case is similar, using that, if $M$ is a symmetric module, the relation  $(x^2) m + m (x^2) =0$, implies that $(x^2)m=0$, since $m (x^2)=0$, so that $M$ arises from a $\glie$-module. 
\end{proof}

\begin{exam}
\label{exam:adjoint_rep}
For $\g$ a Leibniz algebra, the structure morphism makes $\g$ into a $\g$-module, the {\em adjoint module}. Restricting to the right action, $\g\rreg$ is a module over the Lie algebra $\glie$. 
\end{exam}

\subsection{The enveloping algebra of a Leibniz algebra}
\label{subsect:ULg}

The category $\leibmod_\g$ is described in Proposition \ref{prop:ULg_modules} using the enveloping algebra of a Leibniz algebra, as introduced by Loday and Pirashvili: 

\begin{defn}
\label{defn:UL}
\cite[Definition 2.2]{LP}
Let $UL(-)$ be the functor $UL (-) : \leibalg \rightarrow \uassalg$ to  the category of unital associative algebras that is defined on a Leibniz algebra $\g$ by 
\[
UL (\g) := T (\g^l \oplus \g^r) / I
\]
for $I$  the two-sided ideal of the tensor algebra on $\g ^l \oplus \g^r$ (where $\g \cong \g^l$, $x \mapsto l_x$ and $\g \cong \g^r$, $x \mapsto r_x$) generated by the relations for $x, y \in \g$:
\begin{eqnarray*}
r_{xy} &=& [r_x , r_y] \\
l_{xy} &=& [l_x ,r_y] \\
(r_x + l_x) l_y& =& 0. 
\end{eqnarray*}
\end{defn}

\begin{prop}
\label{prop:ULg_modules}
\cite[Theorem 2.3]{LP}
For $\g$ a Leibniz algebra, the category $\leibmod_\g$ is equivalent to the category $\uassmod_{UL(\g)}$ of right $UL (\g)$-modules. In particular, $\leibmod_\g$ has enough projectives and enough injectives.
\end{prop}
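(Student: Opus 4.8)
The plan is to establish the equivalence $\leibmod_\g \simeq \uassmod_{UL(\g)}$ by constructing functors in both directions and showing they are mutually quasi-inverse; the claims about enough projectives and injectives then follow formally, since $\uassmod_{UL(\g)}$ is a category of modules over a unital associative ring. First I would unwind the definition of $UL(\g)$: a right $UL(\g)$-module $M$ is a $\kring$-vector space equipped with $\kring$-linear endomorphisms $M \to M$ for each generator $l_x$ and $r_x$ (acting on the right), subject only to $\kring$-linearity in $x$ and the three families of relations $r_{xy} = [r_x, r_y]$, $l_{xy} = [l_x, r_y]$, and $(r_x + l_x)l_y = 0$. The functor $\leibmod_\g \to \uassmod_{UL(\g)}$ sends a $\g$-module $M$ to $M$ with $m \cdot r_x := mx$ and $m \cdot l_x := xm$; the functor in the other direction sends a right $UL(\g)$-module to the same vector space with right action $mx := m\cdot r_x$ and left action $xm := m \cdot l_x$. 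On morphisms both functors are the identity on underlying $\kring$-linear maps.

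The key step is a careful dictionary between the two sets of axioms: I would verify that the three Leibniz module relations $(MLL)$, $(LML)$, $(LLM)$ of Definition \ref{defn:module_leibniz} are \emph{equivalent}, under the above translation, to the three defining relations of $UL(\g)$. Concretely, writing out $m \cdot r_{xy}$ versus $m \cdot [r_x, r_y] = (mx)y - (my)x$ recovers exactly $(MLL)$; the relation $m \cdot l_{xy} = m\cdot[l_x, r_y]$ unpacks to $(xy)m = (xm)y - (xy)m$... wait, one must be attentive to the side on which the actions compose, so $m \cdot (l_x r_y) = (xm)y$ and $m \cdot (r_y l_x) = x(my)$, giving $(xy)m = (xm)y - x(my)$, which is a rearrangement of $(LML)$; and $m \cdot (r_x + l_x)l_y = 0$ unpacks to $x(ym) + (xy)m$... again watching the order, $m\cdot(r_x l_y) = x(my)$ and $m\cdot(l_x l_y) = (xm)y$... this should rearrange to $(LLM)$. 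Establishing these equivalences precisely — keeping track of which composite of operators corresponds to applying the left action first versus the right action first — is the main (and only real) obstacle; it is a bookkeeping exercise, but one where sign and order errors are easy to make, so I would lay out each of the three correspondences explicitly.

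Once the dictionary is verified, the two functors are visibly inverse to each other on objects and on morphisms (a morphism of $\g$-modules is precisely a $\kring$-linear map commuting with all the $l_x$ and $r_x$, which is exactly a morphism of right $UL(\g)$-modules), so the equivalence of categories follows immediately. Finally, for the concluding sentence: $\uassmod_{UL(\g)}$ has enough projectives because free modules are projective and every module is a quotient of a free one, and it has enough injectives since the category of modules over any unital ring does (e.g. via the existence of injective envelopes, or by the standard construction producing an injective cogenerator); transporting along the equivalence gives the same for $\leibmod_\g$. I would cite \cite[Theorem 2.3]{LP} for the equivalence itself and simply remark that the (co)generation statements are standard facts about module categories.
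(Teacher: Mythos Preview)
The paper gives no proof of this proposition: it is stated with a bare citation to \cite[Theorem 2.3]{LP}. Your approach is the standard one (and is that of \cite{LP}), so there is nothing to compare at the level of strategy.

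There is, however, a concrete slip in your dictionary for the third relation. With $m\cdot r_x = mx$ and $m\cdot l_x = xm$ in a \emph{right} module, one has $m\cdot(r_x l_y) = (m\cdot r_x)\cdot l_y = y(mx)$ and $m\cdot(l_x l_y) = (m\cdot l_x)\cdot l_y = y(xm)$; your tentative computations $x(my)$ and $(xm)y$ reverse the order of application. Thus $(r_x + l_x)l_y = 0$ unpacks to
\[
y(mx) + y(xm) = 0,
\]
which is the relation $(ZD)$ of \cite[Section 1]{LP} (cf.\ Remark \ref{rem:antisymmetric}), \emph{not} $(LLM)$ directly. The missing observation is that, in the presence of $(LML)$, the relation $(ZD)$ is equivalent to $(LLM)$: adding $(LML)$ and $(LLM)$ yields $(ZD)$, and conversely $(ZD)$ combined with $(LML)$ gives $x(ym) = -x(my) = -(xm)y + (xy)m$, which is $(LLM)$. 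Once you insert this step, the equivalence of axiom systems is established and the remainder of your plan (mutually inverse functors on the nose, enough projectives and injectives from general module theory) is correct.
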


Kurdiani provided the following important additional information:

\begin{prop}
\label{prop:involution}
\cite[Proposition 2.3]{MR1709257}
For $\g$ a Leibniz algebra, there is a natural involutive anti-automorphism $\chi_L : UL(\g) \op \stackrel{\cong}{\rightarrow} UL(\g)$ of associative algebras  given by $\chi_L(r_x) = -r_x$ and $\chi_L(l_x) = r_x + l_x$. 

Thus the category of left $UL(\g)$-modules is naturally equivalent to $\uassmod_{UL(\g)}$, hence to $\leibmod_\g$ also.
\end{prop}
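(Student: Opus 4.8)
The plan is to construct $\chi_L$ first on the tensor algebra and then show it descends to $UL(\g)$. I would begin with the $\kring$-linear map $\g^l \oplus \g^r \to UL(\g)$ determined by $r_x \mapsto -r_x$ and $l_x \mapsto r_x + l_x$; by the universal property of the tensor algebra it extends uniquely to an anti-homomorphism of associative algebras $\widetilde{\chi} \colon T(\g^l \oplus \g^r) \to UL(\g)$, equivalently a homomorphism $T(\g^l \oplus \g^r)\op \to UL(\g)$. Since $\widetilde\chi$ is an anti-homomorphism, the image of a two-sided ideal $(S)$ is contained in the two-sided ideal generated by $\widetilde\chi(S)$, so it suffices to check that $\widetilde\chi$ annihilates the three families of relators $r_{xy} - [r_x, r_y]$, $l_{xy} - [l_x, r_y]$ and $(r_x + l_x) l_y$ generating the ideal $I$ of Definition~\ref{defn:UL}.

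The three verifications are short computations. Applying $\widetilde\chi$ to $r_{xy} - [r_x, r_y]$ reproduces that same relator up to an overall sign, hence gives $0$ in $UL(\g)$; applying it to $(r_x + l_x) l_y$ yields $(r_y + l_y) l_x$, which is the third relator with $x$ and $y$ interchanged, hence $0$ again. The one mildly delicate case is the mixed relator $l_{xy} - [l_x, r_y]$: its image expands into $r_{xy}$, $l_{xy}$ and quadratic monomials in the $r$'s and $l$'s which, after cancelling a copy of $r_{xy} - [r_x, r_y]$, collapse back to $l_{xy} - [l_x, r_y]$ itself and so vanish in $UL(\g)$. This is precisely the point at which the choice $\chi_L(l_x) = r_x + l_x$ (rather than a naive $l_x \mapsto \pm l_x$) is forced, and it is the single step I would flag as requiring care with signs. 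Thus $\widetilde\chi$ factors through an algebra homomorphism $\chi_L \colon UL(\g)\op \to UL(\g)$. To see it is involutive, observe that $\chi_L \circ \chi_L$ is an algebra endomorphism of $UL(\g)$, so it is enough to evaluate it on the generators: $\chi_L(-r_x) = r_x$ and $\chi_L(r_x + l_x) = -r_x + (r_x + l_x) = l_x$, whence $\chi_L^2 = \id$ and in particular $\chi_L$ is bijective, i.e.\ an anti-automorphism. Naturality in $\g$ is immediate: for a morphism $f \colon \g_1 \to \g_2$ the composites $\chi_L \circ UL(f)$ and $UL(f) \circ \chi_L$ are both anti-homomorphisms agreeing on the generators $r_x, l_x$, hence equal.

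For the final assertion I would invoke the standard fact that an anti-automorphism $\chi \colon A\op \stackrel{\cong}{\rightarrow} A$ of a unital associative algebra turns a right $A$-module $M$ into a left $A$-module via $a \cdot m := m \cdot \chi(a)$, and that this gives an equivalence between right and left $A$-modules, the inverse functor using $\chi^{-1}$ (here $\chi^{-1} = \chi_L$). Applying this with $A = UL(\g)$ and composing with the equivalence $\leibmod_\g \simeq \uassmod_{UL(\g)}$ of Proposition~\ref{prop:ULg_modules} identifies the category of left $UL(\g)$-modules with $\leibmod_\g$. I do not expect any real obstacle here: the entire content of the proposition is the well-definedness check in the second paragraph.
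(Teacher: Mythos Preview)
Your proof is correct; the computations you sketch all go through as you describe, and the involutivity and naturality checks are routine. Note that the paper itself does not supply a proof of this proposition: it is stated with attribution to Kurdiani \cite[Proposition 2.3]{MR1709257} and left at that, so your argument is not so much to be compared with the paper's as it is a self-contained verification filling in what the paper takes on faith from the literature.
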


\begin{defn}
\label{defn:s0d0d1}
(Cf. \cite[Proposition 2.5]{LP}.)
For $\g$ a Leibniz algebra, define the natural morphisms of associative algebras 
\[
\xymatrix{
UL(\g) 
\ar@<.7ex>[r]|{d_0}
\ar@<-.7ex>[r]|{d_1}
&
U(\glie)
\ar@<-.1ex>@/_1pc/[l]|{s_0}
}
\]
for $x \in \g$ and $\overline{x}$ is its image in $\glie$ by:
\begin{enumerate}
\item 
$d_0 (l_x)=0$ and $d_0 (r_x) = \overline{x}$;
\item 
$d_1 (l_x) = - \overline{x}$ and $d_1(r_x) = \overline{x}$; 
\item 
$s_0 (\overline{x}) = r_x$.
\end{enumerate}
\end{defn}

\begin{rem}
The natural morphisms $s_0$, $d_0$, $d_1$ satisfy $ d_0s_0 = \id_{U(\glie)} = d_1 s_0$. 
\end{rem}

The importance of $d_0$ and $d_1$ is through their rôle in identifying the full subcategories of (anti)symmetric modules:

\begin{defn}
\label{defn:as_sym}
For a Leibniz algebra $\g$  let 
\begin{enumerate}
\item 
$(-)\leibasym : \liemod_{\glie} \rightarrow \leibmod_\g$ denote the functor induced by restriction along $d_0$; 
\item 
$(-)\leibsym : \liemod_{\glie} \rightarrow \leibmod_\g$ denote the functor induced by restriction along $d_1$.
\end{enumerate}
\end{defn}

\begin{rem}
For $\g$ a Leibniz algebra and $N$ a $\glie$-module, $N\leibasym$ (respectively $N \leibsym$) is the associated antisymmetric (resp. symmetric) $\g$-module, via the embeddings of Proposition \ref{prop:right-restriction}.
 Similarly, restriction along $s_0$ induces the functor $
(-)\rreg : 
\leibmod_\g
\rightarrow 
\liemod_\glie. 
$
\end{rem}

\subsection{The closed tensor structure for modules over a Lie algebra}
\label{subsect:closed_tensor_lie}

Let $\h$  be a Lie algebra over the field $\kring$. The following recalls  the closed tensor structure on the category $\liemod_\h$, where the tensor product and the internal hom use the diagonal $\h$-action. For example, for $f \in \hom_\kring (N, X)$ where $N, X \in \ob \liemod_\h$, and $h \in \h$, $fh\in \hom_\kring (N, X)$ acts by $fh (n) = f(n)h - f (nh)$. 

\begin{prop}
\label{prop:closed_tensor_lie}
For $N \in \ob \liemod_\h$, there is an adjunction
\[
-\otimes N \ : \ \liemod_\h \rightleftarrows \liemod_\h \ : \ \hom_\kring (N, -), 
\]
and both the left adjoint $- \otimes N$ and the right adjoint $\hom_\kring (N,-)$ are exact. 

For $X$ an $\h$-module, the adjunction unit $\eta_{X} : X \rightarrow \hom_\kring (N , X \otimes N) $ 
sends $x$ to the map $n \mapsto x \otimes n$; the adjunction counit $\epsilon_X : \hom_\kring (N, X) \otimes N \rightarrow X$ is the evaluation map. 
\end{prop}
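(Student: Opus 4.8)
The statement to prove is Proposition~\ref{prop:closed_tensor_lie}, the closed tensor structure on $\liemod_\h$.

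\medskip

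The plan is to verify directly that the prescribed $\h$-action on $\hom_\kring(N,X)$ is a well-defined $\h$-module structure, and then to check the hom--tensor adjunction at the level of $\kring$-linear maps, confirming that the natural bijection restricts to $\h$-equivariant maps on both sides. First I would record the diagonal action on $X \otimes N$, namely $(x \otimes n)h = xh \otimes n + x \otimes nh$, and the action on $\hom_\kring(N,X)$ given in the statement, $(fh)(n) = f(n)h - f(nh)$; a short computation using the Jacobi/antisymmetry relations in $\h$ (equivalently, that these are the standard actions coming from the Hopf-algebra structure on $U(\h)$ with antipode $\overline{x} \mapsto -\overline{x}$) shows $f[h_1,h_2] = (fh_1)h_2 - (fh_2)h_1$, so $\hom_\kring(N,-)$ indeed lands in $\liemod_\h$; the same Hopf-theoretic input gives that $-\otimes N$ lands in $\liemod_\h$.

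\medskip

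Next I would exhibit the adjunction. The underlying bijection $\hom_\kring(X\otimes N, Y) \cong \hom_\kring(X, \hom_\kring(N,Y))$ is the usual tensor--hom adjunction of $\kring$-vector spaces, $\phi \mapsto \widehat\phi$ with $\widehat\phi(x)(n) = \phi(x\otimes n)$. It remains to show $\phi$ is $\h$-equivariant if and only if $\widehat\phi$ is: assuming $\phi((x\otimes n)h) = \phi(x\otimes n)h$ and expanding both sides via the diagonal action on the left and the $\hom$-action on the right, one checks the two conditions coincide term by term. This gives the claimed adjunction $-\otimes N \dashv \hom_\kring(N,-)$ on $\liemod_\h$, and identifies the unit as $x \mapsto (n \mapsto x\otimes n)$ and the counit as evaluation, since these are the unit and counit of the underlying vector-space adjunction and we have just shown they are $\h$-linear.

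\medskip

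Finally, exactness of both functors is immediate: over the field $\kring$ every module is flat and every vector space is injective relative to $\kring$-linear splittings, so $-\otimes_\kring N$ and $\hom_\kring(N,-)$ are exact as endofunctors of $\kring$-vector spaces, and exactness of a sequence in $\liemod_\h$ is tested on underlying vector spaces; applying a functor that is exact on vector spaces and preserves the subcategory therefore preserves exact sequences in $\liemod_\h$. The only mildly delicate point is the well-definedness of the $\h$-action on $\hom_\kring(N,X)$ — i.e.\ verifying the Jacobi-type identity $f[h_1,h_2] = (fh_1)h_2 - (fh_2)h_1$ — and I expect that bookkeeping (best organized by invoking the $U(\h)$-module/antipode formalism rather than bare-hands manipulation) to be the main, though still routine, obstacle; everything else is the standard adjunction of $\kring$-linear algebra together with the observation that the structure maps are equivariant.
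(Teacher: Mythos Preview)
Your argument is correct and is the standard verification of the closed monoidal structure on $\liemod_\h$ via the Hopf-algebra structure on $U(\h)$. Note, however, that the paper itself supplies no proof for this proposition: it is stated as a recall of well-known material, so there is nothing to compare against beyond observing that your outline is exactly the routine justification one would give if pressed.
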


 \section{Symmetric and antisymmetric modules}
\label{sect:sym_asym} 
 
This Section presents the adjunctions associated to the full subcategories of symmetric (respectively antisymmetric) modules over a Leibniz algebra $\g$. 

\subsection{Induction and coinduction}
\label{subsect:ind_coind}

\begin{nota}
For $\phi : A \rightarrow B$ a morphism of associative algebras, ${}^\phi B$ (respectively $B^\phi$) denotes $B$ considered as a left (respectively right) $A$-module, where $a\in A$ acts via multiplication by $\phi(a)$.
\end{nota}

\begin{defn}
\label{defn:induct_coinduct}
(Cf. \cite[Section II.6]{MR1731415}.)
For $\phi : A \rightarrow B$ a morphism of associative, unital algebras, and $\modules_A$, $\modules_B$ the respective categories of right modules, 
\begin{enumerate}
\item 
the restriction functor $\phi^! : \modules_B \rightarrow \modules_A$ sends a $B$-module $M$ to $M$ with $a\in A$ acting via $\phi(a)$; 
\item 
the induction functor is the functor $- \otimes_A^\phi B : \modules_A \rightarrow \modules_B$, where $B$ acts via right multiplication  on the factor $B$ and $- \otimes_A^\phi B  := - \otimes_A ({}^\phi B)$; 
\item 
the coinduction functor is the functor $\hom_A (B^\phi, -) : \modules_A \rightarrow \modules_B$, where morphisms are taken 
with respect to right $A$-modules, where the right $B$ action on $ \hom_A (B^\phi, -)  $ is induced by the  left action on $B$.
\end{enumerate}
\end{defn}

\begin{prop}
\label{prop:induct_coinduct_adjoints}
\cite[Section II.6]{MR1731415}
For $\phi : A \rightarrow B$ a morphism of associative, unital algebras, induction $- \otimes_A ^\phi B$ is left adjoint to $\phi^!$ and coinduction $\hom_A (B^\phi, -)$ is right adjoint to $\phi^!$.
\end{prop}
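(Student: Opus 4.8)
The plan is to reduce both adjunctions to the standard tensor--hom adjunction for bimodules, by recognising the restriction functor $\phi^!$ as an instance of each of the two sides. Throughout, regard $B$ simultaneously as an $(A,B)$-bimodule ${}^\phi B$ (left $A$-action through $\phi$, right $B$-action by multiplication) and as a $(B,A)$-bimodule $B^\phi$ (left $B$-action by multiplication, right $A$-action through $\phi$).

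First I would prove that induction is left adjoint to $\phi^!$. The general bimodule adjunction --- i.e.\ associativity of tensor and hom --- gives, for any $(A,B)$-bimodule $P$, a natural isomorphism $\hom_B(M\otimes_A P, N)\cong \hom_A(M,\hom_B(P,N))$, so that $-\otimes_A P : \modules_A \rightarrow \modules_B$ is left adjoint to $\hom_B(P,-):\modules_B\rightarrow\modules_A$. Applying this with $P={}^\phi B$ shows that induction $-\otimes_A^\phi B$ is left adjoint to $\hom_B({}^\phi B,-)$, so it remains to identify $\hom_B({}^\phi B,-)$ with $\phi^!$. Since ${}^\phi B$ is free of rank one as a right $B$-module, evaluation at $1_B$ is an isomorphism of $\kring$-vector spaces $\hom_B({}^\phi B,N)\xrightarrow{\cong} N$, natural in $N$; and it intertwines the right $A$-actions, since $A$ acts on the source via the left $A$-action on ${}^\phi B$, whence $(f\cdot a)(1_B)=f(\phi(a))=f(1_B)\phi(a)$, which is exactly the action on $\phi^! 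N$.

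The second assertion is dual. Applying the bimodule adjunction with the $(B,A)$-bimodule $B^\phi$ shows that $-\otimes_B B^\phi : \modules_B\rightarrow\modules_A$ is left adjoint to $\hom_A(B^\phi,-)$, i.e.\ to coinduction. It therefore suffices to identify $-\otimes_B B^\phi$ with $\phi^!$: for a right $B$-module $M$, the map $M\otimes_B B^\phi\rightarrow M$, $m\otimes b\mapsto mb$, is an isomorphism because $B^\phi$ is free of rank one as a left $B$-module, and it carries the right $A$-action $(m\otimes b)\cdot a=m\otimes b\phi(a)$ to $mb\phi(a)$, which is the action on $\phi^! M$. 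Hence coinduction $\hom_A(B^\phi,-)$ is right adjoint to $\phi^!$.

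The argument involves no serious obstacle --- indeed this is a standard fact (cf.\ \cite[Section II.6]{MR1731415}) --- and the only point requiring care is the bookkeeping of left/right bimodule structures, specifically checking that the canonical identifications $\hom_B({}^\phi B,-)\cong\phi^!$ and $-\otimes_B B^\phi\cong\phi^!$ respect the right $A$-actions and not merely the underlying vector spaces. Alternatively one could exhibit the units and counits explicitly --- for induction, the unit $M\rightarrow \phi^!(M\otimes_A^\phi B)$, $m\mapsto m\otimes 1$, and counit $(\phi^! N)\otimes_A^\phi B\rightarrow N$, $n\otimes b\mapsto nb$ --- and verify the triangle identities directly, but the bimodule reduction is cleaner and makes naturality manifest.
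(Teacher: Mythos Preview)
Your argument is correct: reducing both adjunctions to the tensor--hom adjunction for bimodules, and then identifying $\hom_B({}^\phi B,-)$ and $-\otimes_B B^\phi$ with $\phi^!$ via evaluation at $1_B$ (respectively the multiplication map), is exactly the standard proof, and your verification that the right $A$-actions match is the only point requiring care.

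The paper itself does not give a proof of this proposition; it simply records it as a classical fact with a reference to Cartan--Eilenberg \cite[Section II.6]{MR1731415}. So there is nothing to compare against beyond noting that your write-up supplies the details that the paper (reasonably) omits.
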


\subsection{Adjunctions for symmetric modules}
\label{subsect:sym_adj}

The following uses the induction, restriction and coinduction functors of Definition \ref{defn:induct_coinduct}
 and the equivalences of categories between $\leibmod_\g$ and $\uassmod_{UL(\g)}$ (respectively $\liemod_{\glie}$ and  $\uassmod_{U (\glie)}$):

\begin{prop}
\label{prop:leibsym_adjoints}
For $\g$ a Leibniz algebra, the functor 
$ (-)\leibsym : \liemod_\glie \rightarrow \leibmod$ admits both left and right adjoints: 
\[
\xymatrix{
 \liemod_\glie
\ar[rr]|{(-)\leibsym}
 &&
 \leibmod_\g.
\ar@<-1ex>@/_1pc/[ll]_{\lbs}^\perp
\ar@<+1ex>@/^1pc/[ll]^{(-)\sinv}_\perp
}
\]
In particular, $(-)\leibsym$ is exact, $\lbs$ is right exact and preserves projectives and $(-)\sinv$ is left exact and preserves injectives. 

For $M$ a $\g$-module, 
\begin{enumerate}
\item 
the $\lbs \dashv (-)\leibsym$ adjunction unit $M \twoheadrightarrow (\lbs M) \leibsym$ identifies with the surjection 
$$
 M \twoheadrightarrow  M/M_0
$$
where $M_0 := \langle mx + xm \ | \ x \in \g, m \in M \rangle _\kring$ is an antisymmetric $\g$-module;
\item 
the $(-)\leibsym \dashv (-)\sinv$ adjunction counit $(M\sinv)\leibsym \rightarrow M$ identifies with the inclusion 
$$
\{ m \in M \ | \ mx + x m = 0 \  \forall x \in \g \} \hookrightarrow M.
$$
\end{enumerate}
\end{prop}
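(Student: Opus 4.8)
The plan is to produce the two adjoints by applying the general induction/coinduction machinery of Proposition~\ref{prop:induct_coinduct_adjoints} to the algebra map $d_1 : UL(\g) \to U(\glie)$, and then to identify the resulting functors concretely. Indeed, by Definition~\ref{defn:as_sym}, $(-)\leibsym$ is exactly the restriction functor $d_1^!$ under the equivalences $\leibmod_\g \simeq \uassmod_{UL(\g)}$ and $\liemod_\glie \simeq \uassmod_{U(\glie)}$. Proposition~\ref{prop:induct_coinduct_adjoints} then immediately gives a left adjoint $\lbs := - \otimes_{U(\glie)}^{d_1} UL(\g)$ wait --- one must be careful about the direction: $d_1$ goes $UL(\g) \to U(\glie)$, so restriction sends $U(\glie)$-modules to $UL(\g)$-modules, and its left adjoint is induction $- \otimes_{UL(\g)}^{d_1} U(\glie)$ while its right adjoint is coinduction $\hom_{UL(\g)}(U(\glie)^{d_1}, -)$. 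This formally yields the adjunction diagram; the exactness of $(-)\leibsym$ is clear since restriction is exact, and the stated one-sided exactness and preservation properties (right exactness of $\lbs$, preservation of projectives; left exactness of $(-)\sinv$, preservation of injectives) are the standard consequences of these adjunctions together with exactness of the right (resp. left) adjoint.

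Next I would make the unit and counit explicit. For the unit $M \to (\lbs M)\leibsym$: the induction functor $M \mapsto M \otimes_{UL(\g)} U(\glie)$ with the $UL(\g)$-action coming from $d_1$ can be computed as a coequalizer, and since $d_1$ is surjective, $M \otimes_{UL(\g)} U(\glie)$ is the quotient of $M$ by the subspace spanned by $m\cdot u - m\cdot (\text{something})$; concretely one checks that $d_1$ kills exactly the left ideal / relations forcing $l_x$ and $r_x$ to act as $-\overline{x}$ and $\overline{x}$, so the induced module is $M$ modulo the subspace generated by $m(l_x + r_x) = mx + xm$ for $x \in \g$, $m \in M$ --- translating via Definition~\ref{defn:module_leibniz}, $l_x$ acts as left multiplication by $x$ and $r_x$ as right multiplication. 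Thus $\lbs M \cong M/M_0$ with $M_0 = \langle mx + xm \mid x\in\g,\ m\in M\rangle_\kring$, and the universal property of the quotient is precisely the adjunction unit; that $M_0$ is a $\g$-submodule which is antisymmetric follows from a direct check using the relations $(MLL)$, $(LML)$, $(LLM)$ (e.g. $(mx+xm)y = \ldots$ lands back in $M_0$, and $y(mx+xm) = 0$). For the counit $(M\sinv)\leibsym \to M$: by Definition~\ref{defn:induct_coinduct}, $M\sinv = \hom_{UL(\g)}(U(\glie)^{d_1}, M)$, and since $U(\glie)^{d_1}$ is generated as a $UL(\g)$-module by the image of $1$, such a homomorphism is determined by where it sends $1$, giving an injection $M\sinv \hookrightarrow M$ whose image is the set of $m$ annihilated by all elements of $\ker d_1$ acting on the right --- and $\ker d_1$ is generated by the $l_x + r_x$, so the image is $\{m \in M \mid m(l_x + r_x) = 0\ \forall x\} = \{m \mid mx + xm = 0\ \forall x \in \g\}$, and the counit is this inclusion.

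The main obstacle I anticipate is not the formal adjunction but the bookkeeping needed to pin down the precise form of $\ker d_1$ and to verify that induction/coinduction along $d_1$ really does produce the quotient by $M_0$ and the subspace of symmetric elements, respectively, rather than something a priori larger or smaller. This requires knowing the relations defining $UL(\g)$ (Definition~\ref{defn:UL}) well enough to see that modulo $\ker d_1$ one has exactly $l_x \equiv -r_x$, i.e. $\ker d_1$ is the two-sided ideal generated by $\{l_x + r_x \mid x \in \g\}$; this is where the identity $(r_x+l_x)l_y = 0$ in $UL(\g)$ and the computation $d_1(l_x) = -\overline{x} = -d_1(r_x)$ are used. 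Once that is in hand, the identifications of unit and counit, and the claim that $M_0$ is antisymmetric, are routine applications of the module axioms of Definition~\ref{defn:module_leibniz}.
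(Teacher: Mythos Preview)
Your proposal is correct and follows essentially the same route as the paper: both obtain the adjoints from induction/coinduction along $d_1 : UL(\g)\to U(\glie)$ via Proposition~\ref{prop:induct_coinduct_adjoints}, and then identify the unit and counit explicitly. The only difference is emphasis: for the right adjoint, the paper verifies directly from the axioms $(MLL)$, $(LML)$, $(LLM)$ that $\{m : mx+xm=0\ \forall x\}$ is a sub $\g$-module, whereas you phrase the same content as ``$\ker d_1$ is the (two-sided) ideal generated by the $l_x+r_x$''. Note that for the coinduction identification you actually need $\ker d_1$ to be generated by the $l_x+r_x$ as a \emph{right} ideal (equivalently, that the set of symmetric elements is closed under the $\g$-action); this is precisely what the relation $(r_x+l_x)l_y=0$ buys you, as you anticipate, but it is the step that requires the computation and should not be left implicit.
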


\begin{proof}
Proposition \ref{prop:induct_coinduct_adjoints} applied to the  morphism $d_1 : UL(\g) \rightarrow U(\glie)$ gives the  module category interpretation, from which the explicit descriptions can be deduced. It is instructive to check these  directly. 

For the left adjoint, see \cite[Section 1.10]{LP} and \cite[Section 3]{Feld}. For the right adjoint, the key ingredient is that $
\{ m \in M \ | \ mx + x m = 0 \  \forall x \in \g \}
$
 is a sub $\g$-module. To show this, it suffices to check that, for any $m $ in this sub $\kring$-module and $y \in \g$,  $my \in  \{ m \in M \ | \ mx + x m = 0 \ \forall x \in \g \}$;  the result for left multiplication then follows since $ym = -my$, by the hypothesis on $m$.  Hence the result follows from the  sequence of equalities:
 \begin{eqnarray*}
 (my)x + x (my)&=&(my)x + ((xm)y -(xy)m)\\
 &=&(my)x + (xm)y + m(xy) \\
 &=&(my)x + (xm)y + ((mx)y - (my) x) \\
 &=&0,
 \end{eqnarray*}
 where the second equality uses the hypothesis on $m$ to give $-(xy) m= m(xy)$ and the final  one $xm +mx =0$. 
\end{proof}

\begin{prop}
\label{prop:lbs_ULg}
For $\g$ a Leibniz algebra, there is a natural isomorphism of right $\glie$-modules  $\lbs UL(\g) \cong U(\glie)$ and, under this isomorphism, the adjunction unit 
$UL(\g) \rightarrow (\lbs UL(\g)) \leibsym$ identifies with the morphism of $\g$-modules underlying
\[
UL(\g) 
\stackrel{d_1}{\longrightarrow} 
U(\glie)\leibsym .
\]
\end{prop}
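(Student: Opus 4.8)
The plan is to compute $\lbs UL(\g)$ directly from the description of the left adjoint $\lbs$ given in Proposition~\ref{prop:leibsym_adjoints}. By that Proposition, applied to the $\g$-module $M = UL(\g)$ (viewed as a $\g$-module, i.e.\ a right $UL(\g)$-module, via right multiplication on itself), the unit of the adjunction $\lbs \dashv (-)\leibsym$ is the canonical surjection $UL(\g) \twoheadrightarrow UL(\g)/M_0$, where $M_0 = \langle mx + xm \mid x \in \g,\ m \in UL(\g)\rangle_\kring$. In terms of the generators $l_x, r_x \in UL(\g)$, the right action of $x \in \g$ on $m \in UL(\g)$ is multiplication on the right by $r_x$, and the left action is multiplication on the right by $l_x$ (this is part of the equivalence $\leibmod_\g \simeq \uassmod_{UL(\g)}$ of Proposition~\ref{prop:ULg_modules}, which the excerpt lets me assume). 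Hence $M_0$ is exactly the right ideal $UL(\g)\cdot\langle r_x + l_x \mid x \in \g\rangle_\kring$, and $\lbs UL(\g) = UL(\g)/\big(UL(\g)(r_x+l_x)\big)$.

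The second step is to identify this quotient with $U(\glie)$. I would argue that the quotient of $UL(\g)$ by the two-sided ideal generated by $\{r_x + l_x : x \in \g\}$ is isomorphic to $U(\glie)$: imposing $l_x = -r_x$ in the presentation of Definition~\ref{defn:UL} collapses the relation $r_{xy} = [r_x, r_y]$ to the Lie bracket relation for the $x \mapsto r_x$ generators, the relation $l_{xy} = [l_x, r_y]$ becomes $-r_{xy} = [-r_x, r_y] = -[r_x,r_y]$, which is the same relation, and the third relation $(r_x+l_x)l_y = 0$ becomes trivial; moreover $x \mapsto \overline x$ kills $x^2$, which matches the Lie-quotient $\glie = \g/\langle x^2\rangle_\kring$ since $r_{x^2} = [r_x, r_x] = 0$. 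This gives a ring isomorphism $UL(\g)\big/\big(\text{two-sided ideal}(r_x+l_x)\big) \cong U(\glie)$ sending $r_x \mapsto \overline x$. Here I must be slightly careful: $M_0$ is a priori only the \emph{right} ideal generated by the $r_x + l_x$, not the two-sided ideal, so the quotient by $M_0$ is only a right $UL(\g)$-module, not obviously a ring. But as a right $UL(\g)$-module it is in the image of $(-)\leibsym$ (that is what the unit being an isomorphism onto $(\lbs M)\leibsym$ means), so $UL(\g)$ acts through $d_1$; thus the right ideal $M_0$ and the two-sided ideal coincide once we know the quotient is a symmetric module, and the identification $\lbs UL(\g) \cong U(\glie)$ as right $\glie$-modules follows. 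The right $\glie$-module structure on $U(\glie)$ here is right multiplication, matching $d_1(r_x) = \overline x$.

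The third step is to match the unit map with $d_1$. Under the identification just made, the unit $UL(\g) \twoheadrightarrow (\lbs UL(\g))\leibsym = U(\glie)\leibsym$ sends a generator $r_x$ to its class, which is $\overline x = d_1(r_x)$, and sends $l_x$ to the class of $l_x = -r_x$, which is $-\overline x = d_1(l_x)$; since these generate $UL(\g)$ as an algebra and both maps are algebra maps (the quotient map $UL(\g) \to U(\glie)$ and $d_1$), they agree, and the unit is the morphism of $\g$-modules underlying $d_1 : UL(\g) \to U(\glie)\leibsym$, as claimed.

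The main obstacle is the bookkeeping around right ideal versus two-sided ideal and making precise the passage between the ``module over the Leibniz algebra'' picture and the ``right $UL(\g)$-module'' picture — in particular checking that the left and right $\g$-actions on $UL(\g)$ really are right multiplication by $l_x$ and $r_x$ respectively under the equivalence of Proposition~\ref{prop:ULg_modules}. Everything else is a routine comparison of presentations. One clean way to sidestep the ideal subtlety entirely is to invoke Proposition~\ref{prop:induct_coinduct_adjoints}: $\lbs = (-)\otimes_{UL(\g)}^{d_1} U(\glie)$, so $\lbs UL(\g) = UL(\g) \otimes_{UL(\g)} {}^{d_1}U(\glie) \cong {}^{d_1}U(\glie)$, which is $U(\glie)$ with the right $\glie$-module structure coming from $d_1$, i.e.\ the standard one; and the unit $UL(\g) \to \lbs(UL(\g))\leibsym$ is then visibly induced by $d_1$ by naturality of the adjunction. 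I would present the argument via this route, mentioning the explicit quotient description as a cross-check.
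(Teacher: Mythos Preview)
Your proposal is correct, and the ``clean sidestep'' you mention at the end is precisely the paper's proof: it simply observes that $\lbs$ is the induction functor $-\otimes_{UL(\g)}^{d_1} U(\glie)$, so $\lbs UL(\g)\cong U(\glie)$ and the adjunction unit is induced by $d_1$. The longer route you present first, via the explicit quotient $UL(\g)/M_0$ and a presentation comparison, is a genuinely different argument; it has the merit of making the kernel of the unit visible by hand (anticipating Proposition~\ref{prop:ker_d1}), but the right-ideal versus two-sided-ideal issue you flag does need the extra step you sketch (symmetry of the quotient forces $\ker d_1$ to act trivially, hence $M_0=\ker d_1$), so it is heavier than necessary. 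I would lead with the induction-functor argument and relegate the explicit quotient description to a remark.
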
 
 
 \begin{proof}
 As in Proposition \ref{prop:leibsym_adjoints}, the functor $\lbs$ corresponds to the induction functor $- \otimes_{UL(\g)}^{d_1} U (\glie)$, which sends $UL(\g)$ to $U(\glie)$. The induction/ restriction adjunction gives that the adjunction unit is induced by $d_1$.
 \end{proof}
 
\begin{cor}
\label{cor:sinv}
For $\g$ a Leibniz algebra and $M$ a $\g$-module, there is a natural isomorphism:
\[
M\sinv \cong \hom_{\leibmod_\g} (U(\glie)\leibsym, M ) .
\]
\end{cor}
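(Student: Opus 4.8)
The plan is to identify the right adjoint $(-)\sinv$ with the coinduction functor along $d_1$ and then simplify the resulting $\hom$. By Proposition \ref{prop:leibsym_adjoints}, more precisely by the module-category interpretation in its proof via Proposition \ref{prop:induct_coinduct_adjoints} applied to $d_1 : UL(\g) \to U(\glie)$, the functor $(-)\sinv : \leibmod_\g \to \liemod_\glie$ corresponds to the coinduction functor $\hom_{UL(\g)}(U(\glie)^{d_1}, -)$, where the $\glie$-module structure on the target comes from the left $U(\glie)$-action on $U(\glie)^{d_1}$. So for a $\g$-module $M$, viewed as a right $UL(\g)$-module, one has $M\sinv \cong \hom_{UL(\g)}(U(\glie)^{d_1}, M)$ as right $U(\glie)$-modules, i.e. as $\glie$-modules.

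The remaining step is to rewrite $\hom_{UL(\g)}(U(\glie)^{d_1}, M)$ as $\hom_{\leibmod_\g}(U(\glie)\leibsym, M)$. Here one uses that $U(\glie)^{d_1}$ is exactly the right $UL(\g)$-module underlying $U(\glie)\leibsym$: indeed $(-)\leibsym$ is by Definition \ref{defn:as_sym} restriction along $d_1$, and applying it to the regular right $\glie$-module $U(\glie)$ gives precisely $U(\glie)^{d_1}$. Hence the $\kring$-modules $\hom_{UL(\g)}(U(\glie)^{d_1}, M)$ and $\hom_{\leibmod_\g}(U(\glie)\leibsym, M)$ coincide: both are $UL(\g)$-linear maps from the same module to $M$. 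One should check that the two $\glie$-module structures agree — on the left-hand side it is the structure coinduction puts on $\hom$ via the left $U(\glie)$-action on $U(\glie)^{d_1}$ (which is just left multiplication, since $U(\glie)$ is a $U(\glie)$-$UL(\g)$-bimodule via $d_1$ on the right), and on the right-hand side it is whatever $\glie$-structure is being asserted in the statement; these are manifestly the same once one unwinds Definition \ref{defn:induct_coinduct}(3).

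I expect the only real content to be bookkeeping: making sure the right $U(\glie)$-action that coinduction produces on $\hom_{UL(\g)}(U(\glie)^{d_1}, M)$ is identified correctly with the intended $\glie$-action on $\hom_{\leibmod_\g}(U(\glie)\leibsym, M)$, and checking naturality in $M$, which is automatic. There is no hard step here — the statement is essentially a translation of Proposition \ref{prop:leibsym_adjoints} and Proposition \ref{prop:induct_coinduct_adjoints} through the equivalence $\leibmod_\g \simeq \uassmod_{UL(\g)}$ of Proposition \ref{prop:ULg_modules}. One could also give the argument purely formally: $(-)\sinv$ is right adjoint to $(-)\leibsym$, and $\hom_{\leibmod_\g}(U(\glie)\leibsym, -)$ is the composite of $(-)\leibsym$'s right adjoint structure applied to the regular module, so the Yoneda-type identity $\hom_{\leibmod_\g}(U(\glie)\leibsym, M) \cong \hom_{\liemod_\glie}(U(\glie), M\sinv) \cong M\sinv$ (the last isomorphism being evaluation at $1$) does the job, provided one checks this is $\glie$-linear for the natural $\glie$-action on the left coming from the right regular action on $U(\glie)$.
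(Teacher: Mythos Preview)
Your proposal is correct, and both routes you outline work. The paper takes a slightly different path: it starts from $M\sinv \cong \hom_{\leibmod_\g}(UL(\g), (M\sinv)\leibsym)$ (representability by the free module), then applies \emph{both} adjunctions $\lbs \dashv (-)\leibsym$ and $(-)\leibsym \dashv (-)\sinv$ in succession to obtain $\hom_{\leibmod_\g}((\lbs UL(\g))\leibsym, M)$, and finally invokes Proposition~\ref{prop:lbs_ULg} to identify $\lbs UL(\g) \cong U(\glie)$. Your argument is more economical: by going directly through coinduction along $d_1$ (or, in your alternative, through the single adjunction $(-)\leibsym \dashv (-)\sinv$ applied to the regular module $U(\glie)$), you bypass Proposition~\ref{prop:lbs_ULg} entirely. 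The paper's route has the mild narrative advantage of tying the Corollary visibly to the computation of $\lbs UL(\g)$, but yours is the more direct derivation.
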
 
 
\begin{proof}
Since the category of $\g$-modules is equivalent to the category of right $UL(\g)$-modules, there is 
a natural isomorphism $M\sinv \cong \hom_{\leibmod_\g} (UL(\g), (M\sinv)\leibsym)$. By adjunction, the right hand side is isomorphic to $\hom_{\leibmod_\g} ( (\lbs UL(\g))\leibsym, M)$ and, by Proposition \ref{prop:lbs_ULg}, $\lbs UL(\g) \cong U(\glie)$.
\end{proof}

Proposition \ref{prop:lbs_ULg} together with Proposition \ref{prop:leibsym_adjoints} imply that $\ker d_1$ is an antisymmetric $\g$-module. The structure of $UL(\g)$ as a $\g$-module is made explicit as follows:

\begin{prop}
\label{prop:ker_d1}
For $\g$ a Leibniz algebra, the morphism $d_1$ induces a short exact sequence of $\g$-modules:
\[
0
\rightarrow 
(\g \rreg \otimes U(\glie)) \leibasym 
\rightarrow 
UL(\g) 
\stackrel{d_1} {\rightarrow} 
U(\glie)^s 
\rightarrow 
0,
\] 
where $\g \rreg \otimes U(\glie)$ is equipped with the diagonal $\glie$-module structure. 
The monomorphism $\g \rreg \otimes U(\glie) \hookrightarrow UL(\g)$ is given by 
\[
y  \otimes \Theta 
\mapsto 
s_0 (\Theta) (r_y + l_y),
\]
for $y \in \g$ and $\Theta \in U(\glie)$. 

The morphism $s_0 : U(\glie)\rightarrow UL(\g)$ induces a  splitting of the short exact sequence as right $U(\glie)$-modules
giving $UL(\g) \cong (\g \rreg \otimes U(\glie)) \oplus U(\glie) $. 
 The extension is determined with respect to this splitting by: 
\[
s_0 (\Theta) l_y = (y \otimes \Theta) - \Theta \overline{y}
\] 
for $y \in \g$ and $\Theta \in U(\glie)$. 
\end{prop}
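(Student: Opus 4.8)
The plan is to produce the asserted morphism
\[
\Psi \colon \g\rreg \otimes U(\glie) \longrightarrow UL(\g), \qquad y \otimes \Theta \longmapsto s_0(\Theta)(r_y + l_y),
\]
to show that it is a morphism of $\g$-modules identifying $(\g\rreg \otimes U(\glie))\leibasym$ with $\ker d_1$, and then to read off the splitting and the extension formula from the identity $d_1 s_0 = \id_{U(\glie)}$. First I would set up the short exact sequence: since $d_1$ is a morphism of algebras and $U(\glie)\leibsym$ is by definition $U(\glie)$ with the $d_1$-restricted action, $d_1 \colon UL(\g) \to U(\glie)\leibsym$ is a morphism of $\g$-modules; it is surjective because $d_1 s_0 = \id$, and its kernel is antisymmetric as already observed (using Proposition~\ref{prop:lbs_ULg} together with Proposition~\ref{prop:leibsym_adjoints}). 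Applying the exact restriction functor $(-)\rreg$ (restriction along $s_0$) and using $d_1 s_0 = \id$ once more exhibits $s_0$ as a $U(\glie)$-linear section, so the sequence
\[
0 \to \ker d_1 \to UL(\g) \xrightarrow{\ d_1\ } U(\glie)\leibsym \to 0
\]
splits as a sequence of right $U(\glie)$-modules, whence $UL(\g) \cong \ker d_1 \oplus s_0(U(\glie))$ there.

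Next, $\Psi$ is $\kring$-bilinear, hence well defined, and it lands in $\ker d_1$ because $d_1\big(s_0(\Theta)(r_y+l_y)\big) = \Theta(\overline y - \overline y) = 0$. Since $r_x = s_0(\overline x)$, the claim that $\Psi$ is a morphism of right $U(\glie)$-modules — for the diagonal action on the source and the $(-)\rreg$-action on the target — reduces to the single identity
\[
(r_y + l_y)\, r_x = r_{yx} + l_{yx} + r_x(r_y + l_y)
\]
in $UL(\g)$, which is immediate from the defining relations $r_{yx} = [r_y, r_x]$ and $l_{yx} = [l_y, r_x]$. Because the antisymmetric $\g$-modules form a full subcategory equivalent to $\liemod_\glie$ (Proposition~\ref{prop:right-restriction}), it follows that $\Psi$ is a morphism of $\g$-modules $(\g\rreg \otimes U(\glie))\leibasym \to UL(\g)$ whose image lies in $\ker d_1$.

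The heart of the matter — and the step I expect to be the main obstacle — is that $\Psi$ is a bijection onto $\ker d_1$; surjectivity can be checked by hand, but injectivity genuinely requires knowing the underlying vector space of $UL(\g)$, for which I would invoke the explicit basis of $UL(\g)$ given in \cite{LP}: for a $\kring$-basis $\{e_\alpha\}$ of $\g$ and a PBW basis $\{u_\beta\}$ of $U(\glie)$, the family $\{s_0(u_\beta)\} \cup \{s_0(u_\beta)\, l_{e_\alpha}\}$ is a $\kring$-basis of $UL(\g)$. As $\Psi(e_\alpha \otimes u_\beta) = s_0(u_\beta)\, l_{e_\alpha} + s_0(u_\beta \overline{e_\alpha})$ with $s_0(u_\beta \overline{e_\alpha}) \in \langle s_0(u_{\beta'}) \mid \beta' \rangle_\kring$, replacing each $s_0(u_\beta)\, l_{e_\alpha}$ by $\Psi(e_\alpha \otimes u_\beta)$ is an invertible change of basis; since $d_1$ annihilates every $\Psi(e_\alpha \otimes u_\beta)$ while carrying the $s_0(u_\beta)$ to the linearly independent family $\{u_\beta\}$, one reads off that $\{\Psi(e_\alpha \otimes u_\beta)\}$ is a $\kring$-basis of $\ker d_1$. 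Thus $\Psi$ sends the basis $\{e_\alpha \otimes u_\beta\}$ of $\g\rreg \otimes U(\glie)$ to a basis of $\ker d_1$, so it is the claimed isomorphism.

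Finally, combining $\Psi$ with the $s_0$-splitting yields $UL(\g) \cong (\g\rreg \otimes U(\glie)) \oplus U(\glie)$ as right $U(\glie)$-modules. To pin down the extension I would observe that, under this identification, the right $\g$-action is the evident $\glie$-action on each summand because $r_y = s_0(\overline y)$, that $l_y$ acts as zero on $\ker d_1$ by antisymmetry, and that on the summand $s_0(U(\glie))$ the equality $s_0(\Theta)(r_y + l_y) = s_0(\Theta\overline y) + s_0(\Theta)\, l_y$ rearranges to
\[
s_0(\Theta)\, l_y = \Psi(y \otimes \Theta) - s_0(\Theta\overline y),
\]
which is precisely the displayed formula $s_0(\Theta)\, l_y = (y\otimes\Theta) - \Theta\overline y$; these data determine the $\g$-module structure on the right-hand side completely.
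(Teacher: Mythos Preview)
Your proof is correct and follows essentially the same route as the paper's: you show $\Psi$ lands in $\ker d_1$, invoke the Loday--Pirashvili basis of $UL(\g)$ (their Proposition~2.4) for the vector-space isomorphism, verify $\glie$-linearity via the relations $r_{yx}=[r_y,r_x]$ and $l_{yx}=[l_y,r_x]$, and obtain the extension formula by rearranging $s_0(\Theta)(r_y+l_y)=s_0(\Theta\overline y)+s_0(\Theta)l_y$. The only difference is that you spell out the change-of-basis argument for bijectivity explicitly, whereas the paper simply asserts this is straightforward from \cite[Proposition~2.4]{LP}.
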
 
 
\begin{proof}
Since $r_y + l_y$ lies in the kernel of $d_1$, the given linear map takes values in $\ker d_1$. Using \cite[Proposition 2.4]{LP}, it is straightforward to show that this induces an isomorphism of $\kring$-vector spaces. 

To verify the $UL(\g)$-module structure, since $\ker d_1$ is antisymmetric, it suffices to consider the action of $r_z \in UL(\g)$ for $z \in \g$ on an element of the image of $\g \otimes U(\glie) $ in $UL(\g)$. Using the identities $r_y r_z = r_{yz} + r_z r_y$ and $l_y r_z = l_{yz} + r_z l_y$ in $UL(\g)$, one has 
\[
s_0(\Theta) (r_y+ l_y)r_z = s_0 (\Theta ) (r_{yz} + l_{yz} ) + s_0 (\Theta) r_z (l_y + r_y)
\]
and the right hand side is the image of $yz \otimes \Theta + y \otimes \Theta \overline{z}$; this is  the image of $(y \otimes \Theta) $ under the  action of $\overline{z}$ diagonally. This identifies the module structure. 

Finally, for $\Theta \in U(\glie)$, one has $s_0 (\Theta) l_y = s_0 (\Theta) (l_y + r_y) - s_0(\Theta) r_y$ in $UL(\g)$. Here $
s_0 (\Theta) (l_y + r_y)$ is the image of $y \otimes \Theta$, giving the stated result.
\end{proof}

\begin{cor}
\label{cor:ker_d1}
For $\g$ a Leibniz algebra, there is a natural isomorphism of $\g$-modules:
\[
\ker d_1 \cong \g\triv \otimes U(\glie)\leibasym,
\]
where $\g\triv$ is considered as a $\kring$-vector space.
\end{cor}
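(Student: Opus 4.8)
The plan is to reduce the statement to the classical ``untwisting'' (tensor identity) isomorphism for comodules/modules over the cocommutative Hopf algebra $U(\glie)$. First I would invoke Proposition~\ref{prop:ker_d1}, which already identifies $\ker d_1$ with $(\g\rreg \otimes U(\glie))\leibasym$ as a $\g$-module, where $\g\rreg \otimes U(\glie)$ carries the \emph{diagonal} $\glie$-module structure. Since, by Proposition~\ref{prop:right-restriction}, the functor $(-)\leibasym$ is an equivalence onto the full subcategory of antisymmetric $\g$-modules, and since $(\g\triv \otimes U(\glie))\leibasym \cong \g\triv \otimes (U(\glie)\leibasym)$ when $\g\triv \otimes U(\glie)$ is given the $\glie$-action on the second tensor factor only, it suffices to produce a natural isomorphism of right $\glie$-modules
\[
\g\rreg \otimes U(\glie) \ \cong\ \g\triv \otimes U(\glie).
\]

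The key step is then the standard observation that, for any right $U(\glie)$-module $N$, the map $N\triv \otimes U(\glie) \to N \otimes U(\glie)$ sending $n \otimes \Theta$ to $\sum n\Theta_{(1)} \otimes \Theta_{(2)}$ (target with diagonal action, $U(\glie)$ acting on itself by right multiplication, and $\Delta \Theta = \sum \Theta_{(1)} \otimes \Theta_{(2)}$) is an isomorphism of right $U(\glie)$-modules, with inverse $n \otimes \Theta \mapsto \sum n S(\Theta_{(1)}) \otimes \Theta_{(2)}$, where $S$ is the antipode of $U(\glie)$ (the anti-automorphism induced by $\overline{x} \mapsto -\overline{x}$). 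Specializing to $N = \g\rreg$ and applying $(-)\leibasym$ yields the desired isomorphism; composing with the monomorphism of Proposition~\ref{prop:ker_d1} one can check that the resulting embedding $\g\triv \otimes U(\glie) \hookrightarrow UL(\g)$ sends $y \otimes \Theta$ to $(r_y + l_y) s_0(\Theta)$, using the identities $r_{yx} + l_{yx} = (r_y+l_y)r_x - r_x(r_y+l_y)$ in $UL(\g)$. Naturality in $\g$ follows from the functoriality in $\g$ of $UL(\g)$, of $\g\rreg$ and of $U(\glie)$, together with naturality of the tensor identity.

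I do not expect a genuine obstacle here: the content is essentially the combination of Proposition~\ref{prop:ker_d1} with a well-known Hopf-algebraic fact. The one place to be careful is bookkeeping with conventions --- the left/right distinction for $\glie$-modules, the direction of the antipode, and the precise sense in which $(-)\leibasym$ intertwines the trivial-factor $\glie$-structure on $\g\triv \otimes U(\glie)$ with the $\g$-module $\g\triv \otimes (U(\glie)\leibasym)$ --- so that the abstract untwisting isomorphism is correctly transported to an isomorphism of $\g$-modules and matches the explicit description inside $UL(\g)$.
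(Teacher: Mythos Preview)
Your proposal is correct and follows essentially the same route as the paper: invoke Proposition~\ref{prop:ker_d1} to identify $\ker d_1$ with $(\g\rreg \otimes U(\glie))\leibasym$, use the Hopf-algebra untwisting isomorphism $\g\triv \otimes U(\glie) \cong \g\rreg \otimes U(\glie)$ of right $U(\glie)$-modules, and apply $(-)\leibasym$. The paper states the untwisting step more tersely (the isomorphism is the unique $U(\glie)$-linear extension of $y \mapsto y \otimes 1$), whereas you spell out the Sweedler formula, its inverse via the antipode, and the resulting explicit embedding $y \otimes \Theta \mapsto (r_y + l_y)\,s_0(\Theta)$; this extra detail is correct and consistent with the paper's conventions.
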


\begin{proof}
Since $U(\glie)$ is a Hopf algebra, the $\kring$-linear inclusion $\g \hookrightarrow \g \otimes U(\glie)$, $y \mapsto y \otimes 1$ induces an isomorphism of right $U(\glie)$-modules:
\[
\g\triv \otimes U(\glie) 
\stackrel{\cong}{\rightarrow} 
\g\rreg \otimes U(\glie)
\] 
where the codomain has the diagonal action. The result then follows from Proposition \ref{prop:ker_d1} on applying the functor $(-)\leibasym$.
\end{proof}

 \subsection{Adjunctions for antisymmetric modules}
 \label{subsect:anti-sym_adjunc}

\begin{prop}
\label{prop:antisym_adjunction}
For $\g$ a Leibniz algebra, 
$ (-)\leibasym : \liemod_\glie \rightarrow \leibmod$ admits both left and right adjoints: 
\[
\xymatrix{
 \liemod_\glie
\ar[rr]|{(-)\leibasym}
 &&
 \leibmod_\g.
\ar@<-1ex>@/_1pc/[ll]_{\lbas}^\perp
\ar@<+1ex>@/^1pc/[ll]^{(-)\asinv}_\perp
}
\]
In particular, $(-)\leibasym$ is exact, $\lbas$ is right exact and preserves projectives and $(-)\asinv$ is left exact and preserves injectives. 

For $M$ a $\g$-module, 
\begin{enumerate}
\item 
the $\lbas \dashv (-)\leibasym$ adjunction unit $M \twoheadrightarrow (\lbas M) \leibasym$ identifies with the surjection 
$$
 M \twoheadrightarrow  M/M_1
$$
where  $M_1=\langle xm \ | \ x \in \g, m \in M \rangle_\kring$;
\item 
the  $(-)\leibasym \dashv (-)\asinv$ adjunction counit $(M\asinv)\leibasym \rightarrow M$ identifies with the inclusion 
$$
\{ m \in M \ | \ x m = 0\  \forall x \in \g \} \hookrightarrow M.
$$
\end{enumerate}
\end{prop}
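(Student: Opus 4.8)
The plan is to repeat the argument of Proposition~\ref{prop:leibsym_adjoints}, working now with the morphism $d_0 : UL(\g) \to U(\glie)$ of Definition~\ref{defn:s0d0d1} in place of $d_1$. By Definition~\ref{defn:as_sym}, under the equivalences $\leibmod_\g \simeq \uassmod_{UL(\g)}$ and $\liemod_\glie \simeq \uassmod_{U(\glie)}$ the functor $(-)\leibasym$ is restriction along $d_0$; hence Proposition~\ref{prop:induct_coinduct_adjoints} provides the left adjoint $\lbas = - \otimes^{d_0}_{UL(\g)} U(\glie)$ (induction) and the right adjoint $(-)\asinv = \hom_{UL(\g)}(U(\glie)^{d_0}, -)$ (coinduction). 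The formal consequences are then immediate: restriction is exact, so $\lbas$ is right exact and sends projectives to projectives (being left adjoint to an exact functor) and $(-)\asinv$ is left exact and sends injectives to injectives (being right adjoint to an exact functor).

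It remains to identify the two adjoints and their (co)unit morphisms concretely, which I would do directly in $\leibmod_\g$. For $\lbas$: first check that $M_1 = \langle xm \mid x \in \g,\ m\in M\rangle_\kring$ is a sub-$\g$-module, using $(LML)$ in the form $(xm)y = x(my) + (xy)m$ for closure under the right action, and then $(LLM)$ together with this for closure under the left action; consequently $M/M_1$ is antisymmetric, so comes from a $\glie$-module by Proposition~\ref{prop:right-restriction}. Recalling that $l_x$ acts on a right $UL(\g)$-module $M$ by $m\cdot l_x = xm$, and that $d_0(l_x)=0$, the defining relations of $M\otimes_{UL(\g)} {}^{d_0}U(\glie)$ kill $xm\otimes\theta$; the resulting comparison map is readily seen to be an isomorphism $\lbas M \cong M/M_1$, under which the induction/restriction unit is the quotient map $M\twoheadrightarrow M/M_1$.

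For $(-)\asinv$: check, again from $(LML)$ and $(LLM)$, that $\{m \in M \mid xm = 0\ \forall x\in\g\}$ is a sub-$\g$-module; it is antisymmetric by construction, hence of the form $N\leibasym$. To see that $N \cong M\asinv$, compute the coinduction directly: a morphism of right $UL(\g)$-modules $f : U(\glie)^{d_0}\to M$ is determined by $m := f(1)$ (since $d_0$ is surjective, $1$ generates $U(\glie)^{d_0}$), and the assignment $f\mapsto m$ identifies $\hom_{UL(\g)}(U(\glie)^{d_0}, M)$ with $\{m : m\cdot(\ker d_0) = 0\}$; since $\ker d_0$ is the right ideal of $UL(\g)$ generated by the $l_x$ (a consequence of the presentation of $UL(\g)$), this is $\{m : xm = 0\ \forall x\}$, and the coinduction counit, being evaluation at $1$, is the stated inclusion. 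Equivalently, one checks that every morphism from an antisymmetric $\g$-module into $M$ factors through this subspace, which is precisely its universal property.

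The argument is a routine transcription of the $d_1$-case along $d_0$, so I do not expect a genuine obstacle; the only points that warrant being written out are the two verifications that $M_1$ and $\{m : xm = 0\}$ are sub-$\g$-modules and the explicit computation of induction and coinduction along $d_0$ — in particular the identification of $\ker d_0$ as the right ideal generated by the $l_x$.
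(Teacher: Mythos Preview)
Your proposal is correct and follows essentially the same approach as the paper: construct the adjoints as induction and coinduction along $d_0$ via Proposition~\ref{prop:induct_coinduct_adjoints}, then verify the explicit descriptions by checking that $M_1$ and $\{m : xm=0\ \forall x\}$ are sub-$\g$-modules. The paper is terser --- it notes that $M_1$ is ``clearly'' stable under the left action (which is immediate from the definition, so your appeal to $(LLM)$ there is unnecessary) and cites \cite[Lemma 1.3]{FW} for the second submodule check --- whereas you spell out the coinduction computation via $\ker d_0$ more explicitly, but the substance is the same.
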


\begin{proof}
The adjunctions can be constructed using induction and coinduction associated to $d_0 : UL(\g) \rightarrow U(\glie)$, using Proposition \ref{prop:induct_coinduct_adjoints}. 

For the explicit description of $\lbas$, it suffices to show that $M_1$ is a sub $\g$-module of $M$. It is clearly stable by the left action; stability under the right action follows from the relation $(LML)$, which gives 
$(xm) y = x (my) + (xy) m$, $\forall x, y \in \g, \ m \in M$. The right hand side  lies in $M_1$, whence the result.

The verification that $
\{ m \in M \ | \ x m = 0 \  \forall x \in \g \} \hookrightarrow M
$ is a sub $\g$-module follows from the relations for a $\g$-module (see \cite[Lemma 1.3]{FW}). 
\end{proof}

\begin{rem}
\label{rem:antisymmetric}
The submodule $M_1$ is not in general symmetric. Indeed, for $x,y \in \g$, the relation $(xm)y = x(my) +(xy)m$ implies $
(xm)y + y(xm)  = x(my) +(xy)m + y (xm). 
$ 
Then the relation  $x(my) + x (ym) =0$ (cf. the relation $(ZD)$ of \cite[Section 1]{LP})  gives
$
(xm) x + x(xm) = (x^2) m
$ on taking $x=y$. This is not in general zero (see Example \ref{exam:not_sym}).
\end{rem}

\begin{exam}
\label{exam:not_sym}
Take $M = \g$, where $\g$ is the opposite to the left Leibniz algebra of \cite[Example 2.3]{Feld}; explicitly, $\g = \langle e, f \rangle$ as $\kring$-modules, with $e^2= ef= f^2=0$ and $fe = f$. Then $(e+f)^2 = f$; hence, taking $x= (e+f) $ and $m= e$, $(x^2)m = f \neq 0$.  The kernel of the corresponding surjection $M \rightarrow (\lbas M)\leibasym $ is $\langle f \rangle \subset \g$. The relations $(e+f) f=0$ and $f(e+f) =f$ show that this is not symmetric.
\end{exam}

The following is the asymmetric counterpart of Proposition \ref{prop:asym_ULg}:

\begin{prop}
\label{prop:asym_ULg}
For $\g$ a Leibniz algebra, there is a natural isomorphism of right $\glie$-modules 
$
\lbas UL(\g) 
\cong 
U(\glie)
$ and, under this isomorphism, the canonical projection $UL(\g)\twoheadrightarrow (\lbas U(\glie)) \leibasym$
 identifies with the morphism of $\g$-modules  underlying:
\[
UL(\g) 
\stackrel{d_0}{\rightarrow} 
U(\glie)\leibasym.
\]
\end{prop}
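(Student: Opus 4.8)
The plan is to mirror, almost verbatim, the argument used for the symmetric analogue in Proposition~\ref{prop:lbs_ULg}, simply replacing $d_1$ by $d_0$ throughout. Recall from the proof of Proposition~\ref{prop:antisym_adjunction} that the functor $\lbas : \leibmod_\g \to \liemod_\glie$ corresponds, under the equivalence $\leibmod_\g \simeq \uassmod_{UL(\g)}$, to the induction functor $- \otimes_{UL(\g)}^{d_0} U(\glie)$ along the algebra morphism $d_0 : UL(\g) \to U(\glie)$. First I would apply this induction functor to the free rank-one module $UL(\g)$: since induction takes the regular module to the regular module, $UL(\g) \otimes_{UL(\g)}^{d_0} U(\glie) \cong {}^{d_0} U(\glie)$ as right $U(\glie)$-modules, which is exactly the asserted isomorphism $\lbas UL(\g) \cong U(\glie)$.

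Second, I would identify the adjunction unit. For the induction/restriction adjunction $- \otimes_A^\phi B \dashv \phi^!$, the unit at the free module $A$ is, under the identification $A \otimes_A^\phi B \cong B$, precisely the algebra morphism $\phi : A \to \phi^!(B)$ regarded as a map of right $A$-modules (this is the content of Proposition~\ref{prop:induct_coinduct_adjoints} together with the standard description of the unit of an adjunction obtained by restriction of scalars). Applying this with $\phi = d_0$ and translating back through the equivalence $\leibmod_\g \simeq \uassmod_{UL(\g)}$ — under which restriction along $d_0$ is exactly the functor $(-)\leibasym$ of Definition~\ref{defn:as_sym} — gives that the adjunction unit $UL(\g) \twoheadrightarrow (\lbas UL(\g))\leibasym$ is the morphism of $\g$-modules underlying $d_0 : UL(\g) \to U(\glie)\leibasym$, as claimed. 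Surjectivity of $d_0$ is immediate since $d_0(r_x) = \overline{x}$ hits all algebra generators of $U(\glie)$.

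This argument is essentially formal and I do not anticipate a genuine obstacle; the only point requiring a little care is the bookkeeping identifying restriction along $d_0$ with the functor $(-)\leibasym$ (so that the codomain of the unit is correctly read as an \emph{antisymmetric} $\g$-module rather than merely an abstract $UL(\g)$-module), but this is exactly Definition~\ref{defn:as_sym} and the remark following it. Alternatively, and perhaps more in the spirit of the surrounding section, one could verify everything by hand: $\lbas UL(\g) = UL(\g)/(UL(\g))_1$, and one checks directly that the submodule generated by all $l_x \cdot \omega$ is precisely $\ker d_0$, using the relations $r_{xy} = [r_x,r_y]$, $l_{xy}=[l_x,r_y]$ and $(r_x+l_x)l_y = 0$ defining $UL(\g)$ together with the description of $\ker d_0$. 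I would present the short conceptual proof and, if desired, remark that the hands-on check is instructive, exactly as was done for Proposition~\ref{prop:lbs_ULg}.
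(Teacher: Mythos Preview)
Your proposal is correct and follows essentially the same approach as the paper: the paper does not even write out a separate proof for this Proposition, simply flagging it as the antisymmetric counterpart of Proposition~\ref{prop:lbs_ULg}, whose proof is exactly the induction-along-$d_1$ argument you reproduce with $d_0$ in place of $d_1$. Your optional hands-on verification via $(UL(\g))_1 = \ker d_0$ is a reasonable extra remark but is not needed.
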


\begin{cor}
\label{cor:right_inv_identify}
For $\g$ a Leibniz algebra and $M$ a $\g$-module, there are natural isomorphisms:
\[
M\asinv 
\cong 
\hom _{\leibmod_\g} ((\lbas UL(\g))\leibasym, M ) 
\cong 
\hom_{\leibmod_\g} (U(\glie)\leibasym, M). 
\]
\end{cor}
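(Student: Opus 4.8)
The plan is to mirror the proof of Corollary~\ref{cor:sinv}, replacing the symmetric data ($d_1$, $\lbs$, Proposition~\ref{prop:lbs_ULg}) throughout by their antisymmetric counterparts ($d_0$, $\lbas$, Proposition~\ref{prop:asym_ULg}). The argument is purely formal, so I do not anticipate a genuine obstacle; the only points requiring (routine) care are naturality in $M$ and the matching of the right $\glie$-module structures, exactly as in Corollary~\ref{cor:sinv}.

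I would first dispose of the right-hand isomorphism. Proposition~\ref{prop:asym_ULg} supplies an isomorphism of right $\glie$-modules $\lbas UL(\g) \cong U(\glie)$; applying the exact functor $(-)\leibasym$ and then $\hom_{\leibmod_\g}(-,M)$ produces the natural isomorphism
\[
\hom_{\leibmod_\g}((\lbas UL(\g))\leibasym, M)
\cong
\hom_{\leibmod_\g}(U(\glie)\leibasym, M).
\]
So it remains to identify $\hom_{\leibmod_\g}(U(\glie)\leibasym, M)$ with $M\asinv$.

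For this I would invoke the adjunction $(-)\leibasym \dashv (-)\asinv$ of Proposition~\ref{prop:antisym_adjunction} with source object $U(\glie)$, together with the fact that $U(\glie)$ is free of rank one as a right $U(\glie)$-module:
\[
\hom_{\leibmod_\g}(U(\glie)\leibasym, M)
\cong
\hom_{\liemod_\glie}(U(\glie), M\asinv)
\cong
M\asinv,
\]
where the first isomorphism is the adjunction (equivalently, the identification of $(-)\asinv$ with coinduction along $d_0$ via Proposition~\ref{prop:induct_coinduct_adjoints}) and the second is evaluation at $1 \in U(\glie)$, which is an isomorphism of right $\glie$-modules since the structure on the Hom comes from left multiplication on $U(\glie)$. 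Combining the two displays yields both isomorphisms of the statement. If one prefers to run the argument through $UL(\g)$ as literally as in Corollary~\ref{cor:sinv}, one may instead start from the isomorphism $(M\asinv)\leibasym \cong \hom_{\leibmod_\g}(UL(\g), (M\asinv)\leibasym)$ of $\g$-modules (freeness of $UL(\g)$), apply $(-)\rreg$ and use $((-)\leibasym)\rreg = \id$ — which holds because $d_0 s_0 = \id$ — and then rewrite the right-hand side by the $\lbas \dashv (-)\leibasym$ adjunction and Proposition~\ref{prop:asym_ULg}; the same bookkeeping of module structures is needed either way.
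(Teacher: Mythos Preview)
Your proposal is correct and matches the paper's own proof, which simply states that the argument is the same as that of Corollary~\ref{cor:sinv}, \emph{mutatis mutandis}. Your alternative version at the end is the literal mirror of that proof (freeness of $UL(\g)$, then the two adjunctions, then Proposition~\ref{prop:asym_ULg}), while your main version is a mild streamlining using the single adjunction $(-)\leibasym \dashv (-)\asinv$; either way the content is identical.
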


\begin{proof}
The proof is the same as that of Corollary \ref{cor:sinv}, {\em mutatis mutandis}.
\end{proof}

\begin{rem}
The structure of the kernel of $d_0$ is given by \cite[Proposition 2.4]{LP}; from this one sees that $\ker d_0$ is neither symmetric not antisymmetric in general. In particular, $\ker d_0$ is not determined by $(\ker d_0)\rreg$, hence there is no direct analogue of Proposition \ref{prop:ker_d1} for $\ker d_0$. 
\end{rem}

\section{Duality}
\label{sect:duality}

This Section reviews duality for modules over a Leibniz algebra $\g$. Whilst this material is not necessary for the proofs of the main result, there is an underlying duality relationship of inherent interest and which is already implicit in the literature, for example in \cite{MR1383474} and \cite{MW}. The duality functor exploits the Kurdiani involution.

\subsection{Using the Kurdiani involution}
\label{subsect:using_kurd}

Recall that the Kurdiani involution $\chi_L$ of $UL(\g)$ (cf. Proposition \ref{prop:involution}) induces an equivalence between the category of right $UL(\g)$-modules and the category of left $UL(\g)$-modules.
 
 \begin{nota}
 \label{nota:kurd}
Let $\g$ be a Leibniz algebra.
\begin{enumerate}
\item
For $M$ a $\g$-module (considered as a right $UL(\g)$-module), let $M\kurd$ denote $M$ considered as a left $UL(\g)$-module by restriction along $\chi_L : UL(\g)\op \rightarrow UL(\g)$.   
\item 
For $A$ a left $UL(\g)$-module, let ${}\kurd A$ denote $A$ considered as a right $UL(\g)$-module by restriction along $\chi_L : UL(\g)\op \rightarrow UL(\g)$.   
\end{enumerate}
 \end{nota}
 
The following is clear, since $\chi_L$ is an involution: 
 
 \begin{lem}
\label{lem:kurd_flip}
 For $\g$ a Leibniz algebra, $M$ a $\g$-module and $A$ a left $UL(\g)$-module, there are natural isomorphisms:
$
 M  \cong  {}\kurd (M\kurd) $ of right $UL(\g)$-modules and 
 $A  \cong  ({}\kurd A )\kurd$ of left 
 $UL(\g)$-modules.
 \end{lem} 
 
Let $\chi$ denote the canonical involution of the Hopf algebra $U(\glie)$; the morphisms $\chi_L$ and $\chi$ are compatible via the following result, which is proved by a direct verification:

\begin{prop}
\label{prop:compat_chiL}
For $\g$ a Leibniz algebra, there is a natural commutative diagram of morphisms of associative algebras:
\[
\xymatrix{
U(\glie)\op 
\ar[r]^{s_0\op} 
\ar[d]_\chi ^\cong
&
UL(\g)\op 
\ar[r]^{d_1\op}
\ar[d]_{\chi_L}^\cong
&
U(\glie)\op 
\ar[d]^\chi_\cong
\\
U(\glie) 
\ar[r]^{s_0}
&
UL(\g) 
\ar[r]^{d_0}
&
U(\glie).
}
\]
\end{prop}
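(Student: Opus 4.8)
The plan is to verify the commutativity of the two squares separately by evaluating all morphisms on algebra generators, which suffices since every arrow in the diagram is a morphism of associative algebras and the algebras are generated by the indicated elements.

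\smallskip
First I would treat the right-hand square. The source $U(\glie)\op$ is generated by the classes $\overline{x}$ for $x \in \g$ (more precisely by their images, but since the diagram involves $\op$ one must be a little careful about orientation). Going along the top then down: $d_1\op$ is, as a map of sets, the same as $d_1$, so one should compute $\chi \circ d_1$ evaluated on $r_x$ and on $l_x$ separately — recalling $d_1(r_x) = \overline{x}$ and $d_1(l_x) = -\overline{x}$, and that $\chi(\overline{x}) = -\overline{x}$ since $\chi$ is the antipode of $U(\glie)$. Going down then along the bottom: one computes $d_0 \circ \chi_L$ on $r_x$ and $l_x$, using Kurdiani's formulas $\chi_L(r_x) = -r_x$, $\chi_L(l_x) = r_x + l_x$, together with $d_0(r_x) = \overline{x}$, $d_0(l_x) = 0$. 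A direct check shows both composites send $r_x \mapsto -\overline{x}$ and $l_x \mapsto \overline{x}$, so the square commutes on generators; since all maps are algebra homomorphisms (with the $\op$'s reversing multiplication consistently on both routes), it commutes. The left-hand square is handled the same way: on $\overline{x}$, the top-then-down route gives $\chi_L(s_0\op(\overline{x})) = \chi_L(r_x) = -r_x$, while the down-then-along route gives $s_0(\chi(\overline{x})) = s_0(-\overline{x}) = -r_x$; agreement on generators gives commutativity.

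\smallskip
The one genuine subtlety — and the step I would flag as the main point to get right — is bookkeeping with the opposite algebras and the anti-automorphism property: $\chi_L$ is an \emph{anti}-automorphism $UL(\g)\op \to UL(\g)$, so one must confirm that the claim ``agree on generators $\Rightarrow$ agree'' is legitimate, i.e. that both composites $\chi_L \circ s_0\op = s_0 \circ \chi$ and $\chi_L \circ d_1\op = d_1 \circ \chi$ really are honest morphisms of associative algebras out of $U(\glie)\op$. This holds because a composite of two anti-homomorphisms is a homomorphism and a composite of two homomorphisms is a homomorphism, and along each route of each square there is either one anti-homomorphism composed with an appropriate $\op$ or a matched pair; once the source and target are correctly identified as $U(\glie)\op$ and $U(\glie)$ respectively, the universal property of $U(\glie)$ as generated (as an algebra) by $\{\overline{x}\}$ closes the argument. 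No deeper input is needed; this is exactly the ``direct verification'' promised in the statement.
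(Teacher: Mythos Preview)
Your proposal is correct and is exactly the ``direct verification'' the paper has in mind: checking both squares on algebra generators using the explicit formulas for $\chi$, $\chi_L$, $s_0$, $d_0$, $d_1$. One small expository slip: for the right-hand square the relevant source is $UL(\g)\op$, not $U(\glie)\op$, which is why you (correctly) end up verifying on $r_x$ and $l_x$ rather than on $\overline{x}$; once that is said, your computations and your remark about anti-homomorphisms composing to homomorphisms are precisely what is needed.
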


 This gives:
 
 \begin{cor}
 \label{cor:left_right_Uglie}
 For $\g$ a Leibniz algebra, the involution $\chi$ of $U(\glie)$ induces isomorphisms
 \begin{enumerate}
 \item 
 $(U(\glie)^{d_0})\kurd \stackrel{\cong}{\longrightarrow}{}^{d_1} U(\glie)$;
 \item 
 $(U(\glie)^{d_1})\kurd \stackrel{\cong}{\longrightarrow} {}^{d_0} U(\glie)$
 \end{enumerate}
 of left $UL(\g)$-modules.
 \end{cor}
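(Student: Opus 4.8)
The plan is to deduce the corollary directly from the commutative diagram of Proposition~\ref{prop:compat_chiL} by tracking what restriction of scalars along $\chi_L$ does to the modules $U(\glie)^{d_0}$ and $U(\glie)^{d_1}$. First I would fix conventions: for a morphism of algebras $\phi : A \to B$, the module $B^\phi$ has underlying space $B$ with $a \in A$ acting by right multiplication by $\phi(a)$; and for a right $UL(\g)$-module $M$, the left module $M\kurd$ has $u \in UL(\g)$ acting on $m$ by $m \cdot \chi_L(u)$ (using the anti-automorphism property so that this is genuinely a left action). So I want to understand $(U(\glie)^{d_0})\kurd$: its underlying space is $U(\glie)$, and $u \in UL(\g)$ acts on the left by $\xi \mapsto \xi \cdot_{d_0} \chi_L(u) = \xi \, d_0(\chi_L(u))$, the product taken in $U(\glie)$.

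The key step is then to invoke the right square of Proposition~\ref{prop:compat_chiL}, which says $\chi \circ d_1^{\mathrm{op}} = d_0 \circ \chi_L$, i.e. $d_0(\chi_L(u)) = \chi(d_1(u))$ for all $u \in UL(\g)$ (here $d_1^{\mathrm{op}}$ is just $d_1$ on underlying sets). Therefore the left action on $(U(\glie)^{d_0})\kurd$ is $\xi \mapsto \xi \cdot \chi(d_1(u))$. Now I claim the map $\chi : U(\glie) \to U(\glie)$ itself — the Hopf-algebra antipode, which is an anti-automorphism — intertwines this with the standard left module ${}^{d_1}U(\glie)$, whose action is $\xi \mapsto d_1(u) \cdot \xi$. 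Indeed, applying $\chi$ to $\xi \cdot \chi(d_1(u))$ gives $\chi(\chi(d_1(u))) \cdot \chi(\xi) = d_1(u) \cdot \chi(\xi)$, using that $\chi$ is an involutive anti-automorphism of $U(\glie)$; so $\chi : (U(\glie)^{d_0})\kurd \to {}^{d_1}U(\glie)$ is an isomorphism of left $UL(\g)$-modules, which is statement~(1). Statement~(2) is entirely symmetric: use instead the identity $d_1(\chi_L(u)) = \chi(d_0(u))$ coming from the composite of the two squares (equivalently the left square combined with the right one), and the same antipode argument gives $\chi : (U(\glie)^{d_1})\kurd \xrightarrow{\cong} {}^{d_0}U(\glie)$.

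There is essentially no obstacle here beyond bookkeeping with left-versus-right actions and the op-conventions; the only point requiring a moment's care is confirming that the relevant composites in the diagram of Proposition~\ref{prop:compat_chiL} do furnish the two identities $d_0 \circ \chi_L = \chi \circ d_1$ and $d_1 \circ \chi_L = \chi \circ d_0$ as maps of sets (the second being obtained from the first by composing with $\chi_L$ on the right and $\chi$ on the left and using that both $\chi$ and $\chi_L$ are involutions), and that the antipode $\chi$, being an anti-automorphism, converts a ``right multiplication by $\chi(a)$'' action into a ``left multiplication by $a$'' action after transport. I would state the proof in two or three sentences, writing out the action of $u$ on $(U(\glie)^{d_0})\kurd$, substituting via Proposition~\ref{prop:compat_chiL}, and observing that $\chi$ is the desired isomorphism, then remarking that (2) follows {\em mutatis mutandis}.
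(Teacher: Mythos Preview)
Your argument is correct and is precisely the intended deduction from Proposition~\ref{prop:compat_chiL}; the paper gives no explicit proof beyond ``This gives:'', so your unpacking of the module actions and the verification that $\chi$ intertwines them is exactly what is left to the reader. One small remark: the identity $d_1(\chi_L(u)) = \chi(d_0(u))$ does not come from the left square of the diagram (which involves $s_0$, not $d_1$), but rather, as you also correctly note, by applying the involutions to the first identity $d_0 \circ \chi_L = \chi \circ d_1$---you should drop the parenthetical about ``the composite of the two squares'' and keep only the involution argument.
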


 \subsection{The duality functors}
 
 Let $(-)^\sharp$ denote the $\kring$-vector space duality functor. For $M$ a right $UL(\g)$-module, $M^\sharp$ is canonically a left $UL(\g)$-module, hence one can form the associated right $UL(\g)$-module $(M^\sharp)\kurd$; this will be denoted simply $M^\sharp$, by abuse of notation. Similarly, one has the duality functor $(-)^\sharp$ for right $U(\glie)$-modules, by exploiting the conjugation $\chi$. These are compatible, by Proposition \ref{prop:compat_chiL}:
 
 \begin{prop}
 \label{prop:duality_compatibility}
For $\g$ a Leibniz algebra, duality induces exact functors:
 \begin{eqnarray*}
 (-)^\sharp &:& (\leibmod_\g)\op \rightarrow \leibmod_\g \\
 (-)^\sharp &:& (\liemod_\glie)\op \rightarrow \liemod_\glie 
 \end{eqnarray*}
 and these restrict to equivalences of categories between the full subcategories of finite-dimensional modules. 
 
 These are compatible with the functors $(-)\rreg : \leibmod_\g \rightarrow \liemod_\glie$ and $(-)\leibsym, (-)\leibasym : \liemod_\glie \rightarrow \leibmod_\g$ in the following sense: for $M \in \ob \leibmod_\g$ and $N \in \ob \liemod_\glie$, there are natural isomorphisms:
 \begin{eqnarray*}
 (M\rreg)^\sharp &\cong & (M^\sharp)\rreg \\
 (N\leibasym)^\sharp & \cong & (N^\sharp)\leibsym \\
 (N\leibsym)^\sharp & \cong & (N^\sharp)\leibasym.  
 \end{eqnarray*}
 \end{prop}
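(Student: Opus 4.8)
The plan is to split the statement into its formal part — that the two duality functors are well-defined, exact, and are equivalences on the finite-dimensional subcategories — and its substantive part, the three compatibility isomorphisms, and to reduce the latter entirely to the commutative diagram of Proposition \ref{prop:compat_chiL}.

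For the formal part: by the construction recalled just before the statement, $(-)^\sharp$ on $\leibmod_\g$ is the composite of $\kring$-linear dualization, which sends a right $UL(\g)$-module $M$ to the left $UL(\g)$-module $M^\sharp$ with $(a\cdot f)(m)=f(ma)$, followed by restriction along $\chi_L : UL(\g)\op \to UL(\g)$; since $\kring$ is a field, dualization is exact, and restriction functors are always exact, so the composite is an exact contravariant functor, and likewise on $\liemod_\glie$ with $\chi$ in place of $\chi_L$. For the finite-dimensional assertion I would verify that the evaluation map $\mathrm{ev}_M : M \to (M^\sharp)^\sharp$ is a morphism of right $UL(\g)$-modules: unwinding the two twisted dualizations, the right action of $a \in UL(\g)$ on $(M^\sharp)^\sharp$ carries $\phi$ to the functional $f\mapsto\phi\bigl(m\mapsto f(m\,\chi_L^2(a))\bigr)$, and $\chi_L^2=\id$ since $\chi_L$ is involutive (Proposition \ref{prop:involution}), so $\mathrm{ev}_M$ intertwines the actions. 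On finite-dimensional modules $\mathrm{ev}_M$ is then a natural isomorphism, so $(-)^\sharp$ is its own quasi-inverse there; the same argument works on $\liemod_\glie$ since $\chi^2=\id$ ($U(\glie)$ is cocommutative).

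For the compatibility isomorphisms I would use that each of $(-)\rreg$, $(-)\leibasym$, $(-)\leibsym$ is a restriction functor — along $s_0$, $d_0$, $d_1$ respectively. For $M\in\ob\leibmod_\g$, both $(M\rreg)^\sharp$ and $(M^\sharp)\rreg$ have $M^\sharp$ as underlying $\kring$-vector space, and unwinding the twisted dualizations one finds that in both cases $\overline{x}\in\glie$ sends $f$ to the functional $m\mapsto f\bigl(m\cdot s_0(\chi(\overline{x}))\bigr)$; the two descriptions agree exactly because the left-hand square of the diagram in Proposition \ref{prop:compat_chiL} gives $\chi_L(s_0(\overline{x}))=s_0(\chi(\overline{x}))$. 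Hence the identity of $M^\sharp$ is the desired natural isomorphism $(M\rreg)^\sharp\cong(M^\sharp)\rreg$. For $N\in\ob\liemod_\glie$, the right-hand square gives $d_0\circ\chi_L=\chi\circ d_1$, and composing with the involutions $\chi,\chi_L$ also yields $d_1\circ\chi_L=\chi\circ d_0$; the same underlying-space comparison then identifies $(N\leibasym)^\sharp$ with $(N^\sharp)\leibsym$ via the first relation and $(N\leibsym)^\sharp$ with $(N^\sharp)\leibasym$ via the second.

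I do not expect a genuine obstacle here: the only delicate point is the bookkeeping between left and right modules and between an algebra and its opposite — being scrupulous about which map acts, and in which order, once one composes two restrictions with a twisted dualization. Once the conventions are pinned down, each of the three isomorphisms is a one-line chase through Proposition \ref{prop:compat_chiL}, and naturality in $M$ (resp. $N$) is automatic because every identification is the identity on underlying $\kring$-vector spaces.
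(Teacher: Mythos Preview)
Your proposal is correct and follows exactly the approach the paper intends: the paper does not spell out a proof of this proposition, merely asserting (in the sentence preceding the statement) that the compatibilities hold by Proposition~\ref{prop:compat_chiL}, and your argument is precisely the unpacking of that claim. Your bookkeeping with $\chi_L\circ s_0 = s_0\circ\chi$ and $d_0\circ\chi_L = \chi\circ d_1$ (together with its consequence $d_1\circ\chi_L = \chi\circ d_0$) is the correct way to read off the three isomorphisms from the diagram, and the formal part is standard.
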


\begin{rem}
Duality switches the rôles of symmetric and antisymmetric modules. 
\end{rem}

\begin{cor}
\label{cor:nat_duality_iso}
For $M \in \ob \leibmod_\g$, there are natural isomorphisms of functors from $(\leibmod_\g)\op$ to $\liemod_\glie$:
\begin{eqnarray*}
(\lbs M)^\sharp & \rightarrow & (M^\sharp)\asinv \\
(\lbas M)^\sharp & \rightarrow & (M^\sharp)\sinv .
\end{eqnarray*}
\end{cor}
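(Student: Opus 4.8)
The plan is to obtain each isomorphism by dualizing the adjunctions that define the functors involved and then matching the various duality-twisted restrictions using the compatibilities already established. The starting observation is that $\kring$-vector space duality $(-)^\sharp$ is a contravariant exact involution on finite-dimensional modules, and more to the point it converts a left adjoint into (the opposite of) a right adjoint: if $F \dashv G$ is an adjoint pair of additive functors between categories of modules, then applying $(-)^\sharp$ on both sides yields a pair in which $G^\sharp$ becomes left adjoint to $F^\sharp$ in the appropriate opposite-category sense. Concretely, I would first recall from Proposition \ref{prop:leibsym_adjoints} that $\lbs$ is left adjoint to $(-)\leibsym$, and from Proposition \ref{prop:antisym_adjunction} that $(-)\asinv$ is right adjoint to $(-)\leibasym$; the middle functor $(-)^\sharp$ of Proposition \ref{prop:duality_compatibility} intertwines $(-)\leibsym$ and $(-)\leibasym$ via the isomorphism $(N\leibsym)^\sharp \cong (N^\sharp)\leibasym$.

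Next I would assemble the chain of natural isomorphisms. For $M \in \ob\leibmod_\g$ and $N \in \ob\liemod_\glie$ (both finite-dimensional, the general case following by the usual reduction, or directly since the functors in play are all exact and commute with filtered colimits), compute
\[
\hom_{\liemod_\glie}\!\big( (\lbs M)^\sharp, N \big)
\cong
\hom_{\liemod_\glie}\!\big( N^\sharp, \lbs M \big)
\cong
\hom_{\leibmod_\g}\!\big( (N^\sharp)\leibsym, M \big),
\]
using $\kring$-duality for the first step and the $\lbs \dashv (-)\leibsym$ adjunction for the second. Then apply $(-)^\sharp$ to the last $\hom$ and use $(N^\sharp)\leibsym \cong (N\leibasym)^\sharp$ from Proposition \ref{prop:duality_compatibility} to rewrite it as $\hom_{\leibmod_\g}\big( M^\sharp, N\leibasym \big)$; finally the $(-)\leibasym \dashv (-)\asinv$ adjunction of Proposition \ref{prop:antisym_adjunction} identifies this with $\hom_{\liemod_\glie}\big( (M^\sharp)\asinv, N \big)$. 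Thus $(\lbs M)^\sharp$ and $(M^\sharp)\asinv$ represent the same functor of $N$, and Yoneda yields the stated natural isomorphism. The second isomorphism $(\lbas M)^\sharp \cong (M^\sharp)\sinv$ is obtained identically, exchanging the rôles of $\leibsym$ and $\leibasym$ and using $(N\leibsym)^\sharp \cong (N^\sharp)\leibasym$ together with the adjunctions $\lbas \dashv (-)\leibasym$ and $(-)\leibsym \dashv (-)\sinv$.

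The main technical point to be careful about — and the step I expect to require the most attention — is the bookkeeping of the Kurdiani twist $(-)\kurd$ hidden inside the duality functor: recall that $M^\sharp$ here means $(M^\sharp)\kurd$, i.e. the $\kring$-dual of the right $UL(\g)$-module $M$ is made into a right $UL(\g)$-module via $\chi_L$. One must check that, with this convention, the duality-adjunction isomorphism $\hom_{\leibmod_\g}(X,Y) \cong \hom_{\leibmod_\g}(Y^\sharp, X^\sharp)$ is genuinely natural as a statement about $\g$-modules (not merely about underlying $\kring$-vector spaces), which is exactly where Proposition \ref{prop:compat_chiL} and its corollary \ref{cor:left_right_Uglie} enter, ensuring that the twist on the Leibniz side corresponds under $d_0$, $d_1$, $s_0$ to the Hopf-algebra conjugation $\chi$ on the $\glie$ side. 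Once this compatibility is in hand, every arrow above is a composite of natural isomorphisms already recorded in the excerpt, and the two asserted isomorphisms follow formally.
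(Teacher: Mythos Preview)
Your Yoneda strategy is reasonable in spirit, but the displayed chain contains two genuine errors where adjunctions are applied in the wrong direction. The step
\[
\hom_{\liemod_\glie}(N^\sharp, \lbs M)\ \cong\ \hom_{\leibmod_\g}((N^\sharp)\leibsym, M)
\]
would require $(-)\leibsym$ to be \emph{left} adjoint to $\lbs$, whereas in fact $\lbs \dashv (-)\leibsym$; and the final step $\hom_{\leibmod_\g}(M^\sharp, N\leibasym) \cong \hom_{\liemod_\glie}((M^\sharp)\asinv, N)$ likewise reverses $(-)\leibasym \dashv (-)\asinv$. These are not harmless: for $M = Z\leibasym$ antisymmetric, your second step would identify $\hom_{\liemod_\glie}(X, Z_{\glie})$ with $\hom_{\kring}(X_{\glie}, Z^{\glie})$, which fails whenever invariants and coinvariants of $Z$ differ. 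A correct Yoneda argument tests with $\hom_{\liemod_\glie}(N,-)$ instead: $\hom(N,(M^\sharp)\asinv)\cong\hom(N\leibasym,M^\sharp)\cong\hom(M,(N\leibasym)^\sharp)\cong\hom(M,(N^\sharp)\leibsym)\cong\hom(\lbs M,N^\sharp)\cong\hom(N,(\lbs M)^\sharp)$, each adjunction now used in its proper direction. Note also that your parenthetical escape clause ``the functors in play \ldots\ commute with filtered colimits'' is false for $(-)^\sharp$, which is contravariant; the reduction from general $M$ to finite-dimensional $M$ works, but because both $(-)^\sharp$ and the right adjoint $(-)\asinv$ send filtered colimits to cofiltered limits.

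By contrast, the paper bypasses Yoneda and any finite-dimensionality reduction. It constructs the map directly by dualizing the unit $M \to (\lbs M)\leibsym$, which via Proposition~\ref{prop:duality_compatibility} gives a morphism $((\lbs M)^\sharp)\leibasym \to M^\sharp$, hence $(\lbs M)^\sharp \to (M^\sharp)\asinv$ by the $(-)\leibasym \dashv (-)\asinv$ adjunction. That this is an isomorphism is then checked by hand: $\lbs M$ is the cokernel of $M\otimes\g \to M$, $m\otimes x \mapsto xm+mx$, so its dual is the kernel of the transposed map $M^\sharp \to \hom_\kring(\g, M^\sharp)$, and the explicit description of $(-)\asinv$ in Proposition~\ref{prop:antisym_adjunction} identifies that kernel with $(M^\sharp)\asinv$. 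This elementwise argument is uniform in $M$ and avoids both the adjunction-direction pitfalls and the finite-dimensionality bookkeeping.
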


\begin{proof}
The adjunction unit $M \rightarrow (\lbs M) \leibsym$ induces $((\lbs M) \leibsym)^\sharp\rightarrow M^\sharp$ on applying duality and $((\lbs M) \leibsym)^\sharp \cong ((\lbs M)^\sharp)\leibasym$, by Proposition \ref{prop:duality_compatibility}. The adjunction $(-)\leibasym \dashv (-)\asinv$ thus gives the first natural transformation. To see that it is an isomorphism,  use that $\lbs M$ is the cokernel of $M \otimes \g \rightarrow M$ defined by $m \otimes x \mapsto xm + mx$. On applying the duality functor one obtains the map $M^\sharp \rightarrow (M \otimes \g)^\sharp \cong \hom_\kring (\g , M^\sharp)$. This sends $f \in M^\sharp$ to $ x \mapsto fx$ for $x \in \g$ (where $fx$ is given by the right action of $\g$ on $M^\sharp$). The kernel of this map is $(M^\sharp)\asinv$, by Proposition \ref{prop:antisym_adjunction}.

The antisymmetric case is treated similarly.
\end{proof}

 Proposition \ref{prop:duality_compatibility} also gives the following:

\begin{cor}
\label{cor:duality_ext}
For $\g$ a Leibniz algebra and $\g$-modules $M, N$, duality induces a natural morphism
\[
\ext^*_{\leibmod_\g} (M, N) 
\rightarrow 
\ext^*_{\leibmod_\g} (N^\sharp, M^\sharp) 
.
\]
\end{cor}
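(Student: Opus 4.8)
The plan is to build the morphism on $\ext^*$ using the standard characterization of $\ext$ via Yoneda extensions, and to show that the duality functor $(-)^\sharp$ carries extensions to extensions. Concretely, I would first recall that $\ext^n_{\leibmod_\g}(M,N)$ is computed by equivalence classes of $n$-fold exact sequences
\[
0 \rightarrow N \rightarrow E_{n-1} \rightarrow \cdots \rightarrow E_0 \rightarrow M \rightarrow 0
\]
in $\leibmod_\g$, with Baer sum and splicing providing the abelian group and Yoneda-product structure. Since $\leibmod_\g$ is equivalent to $\uassmod_{UL(\g)}$ (Proposition \ref{prop:ULg_modules}), it has enough projectives and injectives, so this Yoneda description agrees with the derived-functor one.

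Next I would use that the duality functor $(-)^\sharp : (\leibmod_\g)\op \rightarrow \leibmod_\g$ is exact, by Proposition \ref{prop:duality_compatibility}. Exactness means it sends any $n$-fold exact sequence as above to an $n$-fold exact sequence
\[
0 \rightarrow M^\sharp \rightarrow E_0^\sharp \rightarrow \cdots \rightarrow E_{n-1}^\sharp \rightarrow N^\sharp \rightarrow 0,
\]
representing a class in $\ext^n_{\leibmod_\g}(N^\sharp, M^\sharp)$. One checks that this assignment respects the equivalence relation on Yoneda extensions (a morphism of $n$-extensions dualizes to one in the opposite direction) and is additive with respect to Baer sum, since $(-)^\sharp$ commutes with finite direct sums and pullbacks/pushouts are sent to the dual pushouts/pullbacks. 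This yields the desired natural homomorphism, and naturality in $M$ and $N$ is immediate from functoriality of $(-)^\sharp$ together with the contravariance built into the Yoneda description.

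Alternatively — and perhaps more cleanly for the purposes of getting a \emph{natural} transformation — I would phrase the argument using projective resolutions: pick a projective resolution $P_\bullet \twoheadrightarrow M$ in $\leibmod_\g$; then $\hom_{\leibmod_\g}(M,N)$-style $\ext$ is the cohomology of $\hom_{\leibmod_\g}(P_\bullet, N)$, and applying $(-)^\sharp$ gives a map $\hom_{\leibmod_\g}(P_\bullet, N) \rightarrow \hom_{\leibmod_\g}(N^\sharp, P_\bullet^\sharp)$. Since $(-)^\sharp$ is exact it sends projectives to injectives, so $P_\bullet^\sharp$ is an injective resolution of $M^\sharp$ (when $M$ is finite-dimensional this is a genuine resolution; in general one uses that $\hom$ out of the $P_i$ still computes $\ext$ on the nose in the injective variable via an injective resolution of $M^\sharp$, or restricts the statement's content to the level of the natural map on cochain complexes up to the usual comparison). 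Taking cohomology produces the map to $\ext^*_{\leibmod_\g}(N^\sharp, M^\sharp)$.

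The main obstacle is the interplay with \emph{infinite-dimensional} modules: $(-)^\sharp$ is only an equivalence on finite-dimensional modules, and $P^\sharp$ of a projective $P$ need not be projective, only injective, so one cannot naively dualize a projective resolution of $M$ into a projective resolution of $M^\sharp$. The Yoneda-extension formulation sidesteps this entirely, since it only requires exactness of $(-)^\sharp$, not that it preserve any distinguished class of objects; so I would favour that route, and the remaining work is the routine (but slightly tedious) verification that dualization is compatible with splicing and Baer sum, i.e. defines a homomorphism of the Yoneda $\ext$-groups. Compatibility with Yoneda products, should it be wanted later, follows from the fact that $(-)^\sharp$ turns a splice of an $m$-extension after an $n$-extension into the splice in the opposite order.
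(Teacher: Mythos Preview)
Your proposal is correct and is essentially what the paper has in mind: the paper gives no explicit proof, stating only that the corollary follows from Proposition~\ref{prop:duality_compatibility}, i.e.\ from the exactness of $(-)^\sharp$; your Yoneda-extension argument is precisely the standard way to make this inference explicit. Your caution about the projective-resolution variant is well placed, and your preference for the Yoneda route is the right call.
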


\begin{rem}
\label{rem:duality_not_iso}
The natural transformation of Corollary \ref{cor:duality_ext} is not an isomorphism in general. However, when $\g$ is  finite-dimensional over $\kring$, one can generalize the approach of Loday and Pirashvili \cite{MR1383474} as extended by Mugniery and Wagemann \cite{MW}, replacing $\ext$ by morphisms calculated in the appropriate derived category of finite-dimensional modules. In this context, duality induces an isomorphism.
\end{rem}

\section{Resolutions}
\label{sect:resolutions}

This Section presents the fundamental resolutions in right and left $UL(\g)$-modules, also making explicit the appropriate actions of $\glie$. This builds upon results of Loday and Pirashvili \cite{LP}.

\subsection{Resolutions in right $UL(\g)$-modules}
\label{subsect:resolutions_right}

The following gives the Leibniz complex that leads to the explicit definition of Leibniz (co)homology:

\begin{thm}
\label{thm:Wg}
\cite[Section 3, Theorem 3.4]{LP}
For $\g$ a Leibniz algebra, there is a  free resolution $W_\bullet (\g)$  of $U(\glie)\leibasym$ in right $UL(\g)$-modules such that $W_n (\g) := \g^{\otimes n} \otimes UL(\g)$, with  differential 
\begin{eqnarray*}
d (x_1 \otimes \ldots \otimes x_n) \otimes 1 &=& 
(x_2\otimes \ldots \otimes x_n) \otimes l_{x_1} 
+ \sum_{i=2}^n (-1)^i (x_1 \otimes \ldots \otimes \widehat{x_i} \otimes \ldots \otimes x_n) \otimes r_{x_i} 
\\
&&
+ 
\sum_{1 \leq i < j \leq n} (-1) ^{j+1} (x_1 \otimes \ldots \otimes x_i x_j \otimes \ldots  \otimes \widehat{x_j} \otimes \ldots \otimes x_n) \otimes 1. 
\end{eqnarray*}
\end{thm}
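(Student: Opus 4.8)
The plan is to verify three things in turn: that each $W_n(\g) = \g^{\otimes n}\otimes UL(\g)$ is a free right $UL(\g)$-module (immediate, since tensoring a $\kring$-vector space with $UL(\g)$ on the right yields a free module on a basis of $\g^{\otimes n}$); that $d$ is a well-defined morphism of right $UL(\g)$-modules with $d^2 = 0$; and that the augmented complex $\cdots \to W_1(\g) \to W_0(\g) = UL(\g) \xrightarrow{d_0} U(\glie)\leibasym \to 0$ is exact. The first point needs no work. For the second, since each $W_n(\g)$ is free, it suffices to define $d$ on the generators $(x_1\otimes\cdots\otimes x_n)\otimes 1$ by the stated formula and extend $UL(\g)$-linearly; the content is the identity $d^2 = 0$, which I would check by a direct if lengthy computation, grouping the terms of $d\circ d$ by type. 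The three families of terms in $d$ (the ``$l_{x_1}$'' head term, the ``$r_{x_i}$'' omission terms, the ``$x_ix_j$'' bracket terms) produce, under a second application of $d$, contributions that must cancel in pairs; the cancellations among the bracket-bracket terms encode the Leibniz relation $x(yz) = (xy)z - (xz)y$ of Definition \ref{defn:Leibniz}, while the mixed terms involving $l_{x_1}$ and $r_{x_i}$ use the defining relations $r_{xy} = [r_x,r_y]$, $l_{xy} = [l_x, r_y]$, and $(r_x + l_x)l_y = 0$ of $UL(\g)$ from Definition \ref{defn:UL}. I expect this bookkeeping to be the main technical obstacle, though it is purely mechanical; the reference \cite{LP} does exactly this.

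For exactness of the augmented complex, the cleanest route is to exhibit a contracting homotopy, or alternatively to filter $UL(\g)$ and pass to an associated graded complex that one recognizes as a Koszul-type resolution. Concretely, by \cite[Proposition 2.4]{LP} the algebra $UL(\g)$ has an explicit $\kring$-basis (a PBW-type basis built from $\g^l$ and $\g^r$), so one can set up a filtration of $W_\bullet(\g)$ under which the associated graded differential has only the ``head'' and ``omission'' terms, and the resulting complex is the Chevalley--Eilenberg–like complex computing $\tor^{UL(\g)}(U(\glie)\leibasym, \kring)$-style invariants; acyclicity in positive degrees then follows from a standard Koszul argument, and surjectivity of the augmentation $d_0$ is clear since $d_0(r_x) = \overline{x}$ hits all of $\glie$ and $U(\glie)$ is generated by $\glie$. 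That the augmentation is indeed $d_0$ — equivalently, that $\mathrm{coker}(d\colon W_1 \to W_0) \cong U(\glie)\leibasym$ compatibly with $d_0$ — follows from $d((x_1)\otimes 1) = l_{x_1}$, so the cokernel of $d$ is $UL(\g)/(UL(\g)\cdot \g^l)$; one checks this quotient is precisely $U(\glie)$ with the antisymmetric (i.e. $d_0$-restricted) action, using the relations of $UL(\g)$ and Proposition \ref{prop:asym_ULg}.

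Since the statement is quoted verbatim from \cite[Theorem 3.4]{LP}, I would in practice simply cite that result; but the self-contained argument above — freeness for free, $d^2=0$ by the $UL(\g)$-relations encoding the Leibniz identity, and acyclicity via a PBW filtration and Koszul-type collapse — is the proof I would write if forced to reproduce it. The one genuinely delicate point worth spelling out, rather than the routine $d^2 = 0$ check, is the identification of the augmentation with $d_0$ and hence of $H_0(W_\bullet(\g))$ with $U(\glie)\leibasym$ as a right $UL(\g)$-module, since this is what ties the abstract resolution to the definition of Leibniz (co)homology used throughout the paper.
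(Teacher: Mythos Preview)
The paper does not give a proof of this theorem at all; it is simply cited from \cite[Section 3, Theorem 3.4]{LP} and then used as input for the rest of Section \ref{sect:resolutions}. Your proposal correctly recognizes this and sketches essentially the argument of \cite{LP}: freeness is immediate, $d^2=0$ is a direct (if tedious) computation organized by the three families of terms and driven by the Leibniz identity together with the defining relations of $UL(\g)$, and acyclicity is obtained via a PBW-type filtration or an explicit contracting homotopy.

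One small slip worth flagging: since $d$ is a map of \emph{right} $UL(\g)$-modules and $d(x\otimes 1)=l_x$, the image of $d\colon W_1\to W_0$ is the right submodule $\g^l\cdot UL(\g)$, not $UL(\g)\cdot\g^l$ as you wrote. This does not affect the conclusion, because the relation $(r_x+l_x)l_y=0$ forces $r_x l_y = -l_x l_y \in \g^l\cdot UL(\g)$, so $\g^l\cdot UL(\g)$ is already a two-sided ideal and coincides with $\ker d_0$; hence $H_0(W_\bullet(\g))\cong U(\glie)\leibasym$ as claimed, compatibly with Proposition \ref{prop:asym_ULg}.
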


\begin{rem}
The resolution $W_\bullet (\g)$ is natural with respect to $\g$ in the following sense:  a morphism $\g \rightarrow \g'$  of Leibniz algebras induces a morphism of complexes $W_\bullet (\g) \rightarrow W_\bullet (\g')$  of right $UL(\g)$-modules, where $W_\bullet (\g')$ is considered as a $UL(\g)$-module by restriction along $UL(\g) \rightarrow UL(\g')$. 

Similar naturality statements hold for the other structures considered here.
\end{rem}

Combining Theorem \ref{thm:Wg} with Corollary \ref{cor:ker_d1} gives:

\begin{prop}
\label{prop:resolution_Uglie_sym}
For $\g$ a Leibniz algebra, there is a free resolution $V_\bullet (\g)$ of $U(\glie)\leibsym$ in right $UL(\g)$-modules that occurs in the short exact sequence of complexes:
\[
0
\rightarrow 
UL(\g) 
\rightarrow 
V_\bullet (\g)
\rightarrow 
( \g \otimes W_\bullet (\g) )[1]
\rightarrow 
0
\]
that is determined by the differential $ V_1(\g) =  \g  \otimes UL(\g) 
 \rightarrow V_0(\g) = UL(\g)$, which is the composite:
\[
 \g  \otimes
UL(\g) 
\twoheadrightarrow 
\g \otimes U(\glie)\leibasym 
\cong 
\ker d_1 
\hookrightarrow 
UL(\g)
\]
induced by  $\g \rightarrow UL(\g)$, $x \mapsto r_x + l_x$.
\end{prop}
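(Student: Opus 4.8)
The plan is to build $V_\bullet(\g)$ by splicing the resolution $W_\bullet(\g)$ of $U(\glie)\leibasym$ from Theorem~\ref{thm:Wg} against the short exact sequence of Proposition~\ref{prop:ker_d1}, and then to check it resolves $U(\glie)\leibsym$ and is free. Concretely, I would \emph{define} $V_n(\g) := \g^{\otimes(n-1)} \otimes \g \otimes UL(\g) = \g^{\otimes n}\otimes UL(\g) = W_{n-1}(\g)\otimes\g$ for $n\geq 1$ (using the $W_\bullet$ grading shifted by one, i.e.\ $(\g\otimes W_\bullet(\g))[1]$ in degrees $\geq 1$) and $V_0(\g) := UL(\g)$; these are all free right $UL(\g)$-modules. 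The differentials in degrees $\geq 2$ are those of $(\g\otimes W_\bullet(\g))[1]$, i.e.\ $\id_\g\otimes d^W$ up to sign; the only new piece of data is $d\colon V_1(\g)=\g\otimes UL(\g)\to V_0(\g)=UL(\g)$, which I take to be the composite displayed in the statement, namely $\g\otimes UL(\g)\twoheadrightarrow \g\otimes U(\glie)\leibasym \xrightarrow{\cong}\ker d_1\hookrightarrow UL(\g)$ induced by $x\mapsto r_x+l_x$ (this is exactly the monomorphism of Proposition~\ref{prop:ker_d1} precomposed with the quotient $UL(\g)\twoheadrightarrow U(\glie)$). By construction this yields the short exact sequence of complexes
\[
0 \to UL(\g) \to V_\bullet(\g) \to (\g\otimes W_\bullet(\g))[1] \to 0,
\]
where $UL(\g)$ sits in degree $0$ (as a complex concentrated there) and the quotient is $(\g\otimes W_\bullet(\g))[1]$.

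Next I would run the long exact homology sequence of this short exact sequence of complexes. The quotient complex $(\g\otimes W_\bullet(\g))[1]$ has homology concentrated in degree $1$, where it equals $\g\otimes H_0(W_\bullet(\g)) = \g\otimes U(\glie)\leibasym$ (using Theorem~\ref{thm:Wg} and that $\g\otimes -$ is exact over the field $\kring$, or rather that $\g^{\otimes n}\otimes UL(\g)$ are free so $\g\otimes W_\bullet(\g)$ still resolves $\g\otimes U(\glie)\leibasym$, which by Corollary~\ref{cor:ker_d1} is $\ker d_1$ — here I should be careful that the tensor factor $\g$ carries the diagonal-vs-trivial subtlety, but Corollary~\ref{cor:ker_d1} tells us $\g\triv\otimes U(\glie)\leibasym\cong\ker d_1$ as $\g$-modules, which is what matters). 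The sub-complex $UL(\g)$ contributes $H_0 = UL(\g)$. The connecting map $H_1\big((\g\otimes W_\bullet(\g))[1]\big)=\ker d_1 \to H_0(UL(\g)) = UL(\g)$ is, by the very definition of $d\colon V_1\to V_0$, precisely the inclusion $\ker d_1\hookrightarrow UL(\g)$. Hence the long exact sequence collapses to: $H_n(V_\bullet(\g))=0$ for $n\geq 2$, $H_1(V_\bullet(\g)) = \ker(\ker d_1\hookrightarrow UL(\g)) = 0$, and $H_0(V_\bullet(\g)) = \mathrm{coker}(\ker d_1\hookrightarrow UL(\g)) = UL(\g)/\ker d_1 \cong U(\glie)\leibsym$ via $d_1$, the last isomorphism being Proposition~\ref{prop:ker_d1} again. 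This shows $V_\bullet(\g)$ is a free resolution of $U(\glie)\leibsym$, as required, and the description of $d\colon V_1\to V_0$ is built into the construction.

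The step I expect to need the most care is the bookkeeping identifying the connecting homomorphism with the inclusion $\ker d_1\hookrightarrow UL(\g)$ — i.e.\ confirming that the differential $V_1(\g)\to V_0(\g)$ I have prescribed is genuinely compatible with the differential $V_2(\g)\to V_1(\g)$ inherited from $(\g\otimes W_\bullet(\g))[1]$, so that $V_\bullet(\g)$ is a bona fide complex. This amounts to checking $d|_{V_1}\circ d|_{V_2} = 0$, equivalently that the composite $\g\otimes\g\otimes UL(\g)\xrightarrow{\id_\g\otimes d^W_1}\g\otimes UL(\g)\to UL(\g)$ vanishes, where $d^W_1\colon \g\otimes UL(\g)\to UL(\g)$ is the degree-$1$ differential of $W_\bullet(\g)$ from Theorem~\ref{thm:Wg}. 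Since $W_\bullet(\g)$ is a complex resolving $U(\glie)\leibasym$, the image of $d^W_1$ lies in $\ker d_1$ (its augmentation is $d_0$, and one checks $d_1\circ(\text{augmentation of }W)=0$ — or more directly, the relevant elements $l_{x_1}$, $r_{x_i}$, and $x_ix_j$-corrections all map into $\ker d_1$ after the inclusion $x\mapsto r_x+l_x$), so the composite factors through $\ker d_1\otimes$(something) mapped in via an inclusion, and a short explicit calculation with the relations $r_{xy}=[r_x,r_y]$, $l_{xy}=[l_x,r_y]$, $(r_x+l_x)l_y=0$ in $UL(\g)$ gives the vanishing. This is the routine-but-delicate computation that pins the construction down; everything else is formal diagram-chasing with the long exact sequence and quotation of Proposition~\ref{prop:ker_d1} and Corollary~\ref{cor:ker_d1}.
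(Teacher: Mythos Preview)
Your proposal is correct and is precisely the splicing argument the paper uses: take the short exact sequence $0\to \g\otimes U(\glie)\leibasym \to UL(\g)\xrightarrow{d_1} U(\glie)\leibsym\to 0$ from Corollary~\ref{cor:ker_d1}, resolve $\ker d_1$ by $\g\otimes W_\bullet(\g)$, and splice. The paper states this in two sentences and leaves the verifications implicit; your long exact sequence argument and the check that $d^2=0$ (which comes down cleanly to the relation $(r_x+l_x)l_y=0$, since $d^W_1(y\otimes 1)=l_y$) are exactly the details one fills in.
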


\begin{proof}
Corollary \ref{cor:ker_d1} gives the short exact sequence of $\g$-modules:
\[
0
\rightarrow 
\g  \otimes U(\glie)\leibasym
\rightarrow 
UL(\g)
\stackrel{d_1}{\rightarrow}
U(\glie)\leibsym
\rightarrow 
0.
\]
The complex $ \g \otimes  W_\bullet (\g) $ is a free resolution of $  \g \otimes U(\glie)\leibasym $ as a right $UL(\g)$-module, by Theorem \ref{thm:Wg}. The result then follows by splicing  with the short exact sequence.
\end{proof}

\begin{rem}
The above is implicit in the proof of \cite[Proposition 2.2]{MW}.
\end{rem}

 \subsection{Resolutions in left $UL(\g)$-modules}
 \label{subsect:left}

Using the Kurdiani involution as in Section \ref{subsect:using_kurd}, one has:
 
 \begin{lem}
\label{lem:left_right_free}
 For $Z$ a $\kring$-vector space, there is a natural isomorphism of left $UL(\g)$-modules:
 \begin{eqnarray*}
 (Z \otimes UL(\g))\kurd 
 &\stackrel{\cong}{\rightarrow}&
 UL(\g) \otimes Z 
 \\
 z \otimes a & \mapsto & \chi_L(a) \otimes z.
 \end{eqnarray*}
 \end{lem}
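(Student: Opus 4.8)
The statement to prove is Lemma \ref{lem:left_right_free}: for a $\kring$-vector space $Z$, the map $z \otimes a \mapsto \chi_L(a) \otimes z$ is a natural isomorphism of left $UL(\g)$-modules $(Z \otimes UL(\g))\kurd \stackrel{\cong}{\rightarrow} UL(\g) \otimes Z$. The plan is to first be careful about which module structures are in play. On the left-hand side, $Z \otimes UL(\g)$ is a \emph{free right} $UL(\g)$-module (on the basis $Z$), with $UL(\g)$ acting by right multiplication on the tensorand $UL(\g)$; applying $(-)\kurd$ (Notation \ref{nota:kurd}) turns it into a \emph{left} $UL(\g)$-module where $b \in UL(\g)$ acts by $b \cdot (z \otimes a) = z \otimes (a \chi_L(b))$, since $\chi_L : UL(\g)\op \to UL(\g)$ is the restriction map. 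On the right-hand side, $UL(\g) \otimes Z$ is the \emph{free left} $UL(\g)$-module on $Z$, with $b$ acting by $b \cdot (c \otimes z) = (bc)\otimes z$.

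\textbf{Key steps.} First I would check the map is well defined and $\kring$-linear, which is immediate since it is built from the linear involution $\chi_L$ of Proposition \ref{prop:involution} and the flip of tensor factors. Second, I would exhibit the inverse explicitly: it is $c \otimes z \mapsto z \otimes \chi_L(c)$, and the two composites are the identity because $\chi_L$ is involutive ($\chi_L \circ \chi_L = \id$), which is part of the content of Proposition \ref{prop:involution}. Third — the only step with any content — I would verify equivariance for the left $UL(\g)$-actions described above. Taking $b \in UL(\g)$ and a generator $z \otimes a$, the image of $b\cdot(z\otimes a) = z \otimes (a\chi_L(b))$ is $\chi_L(a \chi_L(b)) \otimes z$; since $\chi_L$ is an anti-automorphism, $\chi_L(a\chi_L(b)) = \chi_L(\chi_L(b))\,\chi_L(a) = b\,\chi_L(a)$, which is exactly $b$ acting on the image $\chi_L(a)\otimes z$ of $z\otimes a$. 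Fourth, naturality in $Z$ is clear, as every ingredient is natural in $Z$.

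\textbf{Main obstacle.} There is no serious mathematical obstacle here; the lemma is essentially the observation that applying an anti-automorphism converts a free right module into a free left module with the roles of the two tensor factors swapped. The only thing that requires care — and the place where a reader could get confused — is bookkeeping the direction of the actions: the appearance of $\chi_L$ on the \emph{right} of $a$ in the twisted action on $(Z\otimes UL(\g))\kurd$ versus on the \emph{left} after transport, and checking that the anti-automorphism property of $\chi_L$ reverses this correctly. So I would present the proof as a short explicit verification of equivariance together with the remark that the inverse is given by the same formula with $\chi_L$ applied in the other slot, invoking only the involutivity and anti-multiplicativity of $\chi_L$ from Proposition \ref{prop:involution}.
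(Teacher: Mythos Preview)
Your proposal is correct and is exactly the verification one would carry out; the paper itself states the lemma without proof, so your argument simply fills in the routine check using the involutivity and anti-multiplicativity of $\chi_L$ from Proposition \ref{prop:involution}.
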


This allows the resolution $W_\bullet (\g)$ of $U(\glie)\leibasym$ in right $UL(\g)$-modules of Theorem \ref{thm:Wg} to be transposed to give a resolution of ${}^{d_1} U(\glie)$ in left $UL(\g)$-modules and likewise for the complex $V_\bullet (\g)$ of Proposition \ref{prop:resolution_Uglie_sym}:
 
 \begin{thm}
 \label{thm:left_module_d1_d0}
 Let $\g$ be a Leibniz algebra.  
 \begin{enumerate}
 \item 
 The complex $W_\bullet (\g) \kurd$ is  a free resolution of  $^{d_1}U(\glie)$ in left $UL(\g)$-modules. Explicitly, 
 $ 
 W_n (\g)\kurd = UL(\g) \otimes \g^{\otimes n},
 $ 
 with differential determined by 
 \begin{eqnarray*}
 d (1 \otimes \langle x_1 , \ldots ,x_n \rangle)
  &=& l_{x_1} \otimes \langle x_2, \ldots , x_n\rangle + 
 \sum_{i\geq 1} (-1)^{i+1} r_{x_i} \otimes \langle  x_1 , \ldots \widehat{x_i} , \ldots, x_n \rangle + \\
&&
 \sum_{1 \leq i< j} (-1)^{j+1} 1 \otimes \langle  x_1 , \ldots, x_ix_j, \ldots,  \widehat{x_j} , \ldots, x_n \rangle.
 \end{eqnarray*}
\item 
The complex $V_\bullet (\g)\kurd$ is a free resolution 
of ${}^{d_0} U(\glie)$ in left $UL(\g)$-modules that fits into the short exact sequence of complexes:
 \[
 0
 \rightarrow 
 UL(\g)
 \rightarrow 
 V_\bullet (\g)\kurd 
 \rightarrow 
( W_\bullet (\g)\kurd \otimes \g)[1]
 \rightarrow 
 0
 \]
 that is determined by the differential from homological degree one to degree zero:
 \[
 UL(\g) \otimes  \g
 \rightarrow 
 UL(\g)
 \] 
induced by  $\g \hookrightarrow UL(\g)$, $x\mapsto l_x$.  
\end{enumerate}
 \end{thm}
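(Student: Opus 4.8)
The plan is to obtain both statements by transporting the resolutions of Theorem~\ref{thm:Wg} and Proposition~\ref{prop:resolution_Uglie_sym} along the Kurdiani involution. By Proposition~\ref{prop:involution}, restriction along $\chi_L$ is an exact equivalence from right $UL(\g)$-modules to left $UL(\g)$-modules that sends free modules to free modules; hence it carries free resolutions to free resolutions. The only work is then to identify the image of the augmentation object and to render the differentials explicitly.

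For~(1), I would apply $(-)\kurd$ to the free resolution $W_\bullet(\g) \to U(\glie)\leibasym = U(\glie)^{d_0}$ of Theorem~\ref{thm:Wg}. This immediately gives that $W_\bullet(\g)\kurd$ is a free resolution of $(U(\glie)^{d_0})\kurd$, which is $^{d_1}U(\glie)$ by Corollary~\ref{cor:left_right_Uglie}(1). To obtain the explicit description, I would use Lemma~\ref{lem:left_right_free} with $Z = \g^{\otimes n}$ to identify $W_n(\g)\kurd = (\g^{\otimes n} \otimes UL(\g))\kurd$ with $UL(\g) \otimes \g^{\otimes n}$, the class of $(x_1 \otimes \cdots \otimes x_n)\otimes a$ corresponding to $\chi_L(a) \otimes \langle x_1, \ldots, x_n \rangle$; applying this identification to the three families of terms in the differential of $W_\bullet(\g)$ and substituting $\chi_L(l_x) = r_x + l_x$, $\chi_L(r_x) = -r_x$ and $\chi_L(1)=1$ yields the stated formula. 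In particular the $i=1$ summand of $\sum_{i\geq1}(-1)^{i+1}r_{x_i}\otimes\langle\ldots\rangle$ combines with the leading term to produce $\chi_L(l_{x_1}) = r_{x_1}+l_{x_1}$, and the remaining signs are as recorded.

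For~(2), I would apply $(-)\kurd$ to the resolution $V_\bullet(\g) \to U(\glie)\leibsym = U(\glie)^{d_1}$ of Proposition~\ref{prop:resolution_Uglie_sym}; by Corollary~\ref{cor:left_right_Uglie}(2) the augmentation target becomes $^{d_0}U(\glie)$. Applying the exact functor $(-)\kurd$ to the short exact sequence of complexes of Proposition~\ref{prop:resolution_Uglie_sym} gives a short exact sequence $0 \to UL(\g)\kurd \to V_\bullet(\g)\kurd \to ((\g \otimes W_\bullet(\g))\kurd)[1] \to 0$. I would then identify $UL(\g)\kurd \cong UL(\g)$ via $\chi_L$ and, using Lemma~\ref{lem:left_right_free} together with a reordering of the tensor factors, $(\g \otimes W_\bullet(\g))\kurd \cong W_\bullet(\g)\kurd \otimes \g$, so as to recover the displayed sequence. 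Finally, the degree-one-to-degree-zero differential is obtained by tracing the map $\g \otimes UL(\g) \to UL(\g)$ induced by $x \mapsto r_x + l_x$ through $(-)\kurd$, which sends $1 \otimes x$ to $\chi_L(r_x + l_x) = l_x$, giving the asserted description.

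The argument is essentially formal once the Kurdiani involution is in hand; the main obstacle is purely bookkeeping, namely keeping the signs straight, getting the order of the tensor factors to match the stated normalisation, and handling correctly the $^{\mathrm{op}}$-convention in the restriction along $\chi_L$ (harmless since $\chi_L$ is involutive, but easy to mishandle).
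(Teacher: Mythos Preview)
Your proposal is correct and is precisely the argument the paper intends: the theorem is stated there without an explicit proof, as an immediate consequence of transporting Theorem~\ref{thm:Wg} and Proposition~\ref{prop:resolution_Uglie_sym} along the Kurdiani involution using Lemma~\ref{lem:left_right_free} and Corollary~\ref{cor:left_right_Uglie}. Your explicit checks of the differential via $\chi_L(l_x)=r_x+l_x$, $\chi_L(r_x)=-r_x$, and $\chi_L(r_x+l_x)=l_x$ are exactly the bookkeeping that the paper leaves to the reader.
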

 
 \subsection{The $\glie$-actions}
This Section shows that the resolutions introduced above admit natural right $\glie$-actions. This is inspired by \cite[Proposition 3.1]{LP} (see Remark \ref{rem:caveat_LP} for one crucial difference with the definitions given there). 

The following introduces the right $\glie$-action on the relevant objects:
 
\begin{lem}
\label{lem:right_action_X}
For $\g$ a Leibniz algebra and $n \in \nat$, $UL(\g)\rreg \otimes (\g\rreg)^{\otimes n}$ is a right $\glie$-module for the diagonal action.  This action commutes with the left $UL(\g)$-module structure, so that $UL(\g) \otimes \g^{\otimes n}$ acquires the structure of a  left $UL(\g) \otimes U(\glie)\op$-module.
\end{lem}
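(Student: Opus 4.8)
The plan is to unwind the definitions and check the three compatibility statements in turn, the main point being that the right $\glie$-action on $UL(\g)\rreg$ — which is restriction along $s_0 : U(\glie) \to UL(\g)$, i.e. $a \cdot \overline{z} = a\, r_z$ — commutes with the left $UL(\g)$-action on $UL(\g)$ simply because left and right multiplication in the associative algebra $UL(\g)$ commute. The diagonal action on $UL(\g)\rreg \otimes (\g\rreg)^{\otimes n}$ is defined, for $\overline{z} \in \glie$, by $(a \otimes x_1 \otimes \cdots \otimes x_n)\cdot \overline z = a r_z \otimes x_1 \otimes \cdots \otimes x_n + \sum_i a \otimes x_1 \otimes \cdots \otimes x_i z \otimes \cdots \otimes x_n$, using the adjoint $\glie$-action on each $\g\rreg$ factor (Example \ref{exam:adjoint_rep}); this is a right $\glie$-module structure precisely because the tensor product of right $\glie$-modules carries the diagonal action (Proposition \ref{prop:closed_tensor_lie} applied iteratively), so nothing needs checking there beyond recording the formula.

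First I would spell out the two actions explicitly on $UL(\g) \otimes \g^{\otimes n}$: the left $UL(\g)$-action is $b \cdot (a \otimes \underline{x}) = (ba) \otimes \underline{x}$ for $b \in UL(\g)$, and the right $\glie$-action is the diagonal one just described. Then the commutation $b \cdot \big( (a \otimes \underline x)\cdot \overline z\big) = \big( b\cdot (a \otimes \underline x)\big)\cdot \overline z$ reduces, term by term in the diagonal sum, to the identities $b(a r_z) = (ba) r_z$ and $b\otimes(\text{modify }x_i) $ commuting with left multiplication by $b$ — both immediate from associativity and from the fact that the $\g$-factor modifications do not involve the $UL(\g)$-factor. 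This is a routine verification with no obstacle.

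Finally, to conclude that $UL(\g)\otimes\g^{\otimes n}$ becomes a left $UL(\g)\otimes U(\glie)\op$-module, I would invoke the standard fact that a left action of $A$ and a right action of $B$ on the same vector space that commute are the same thing as a left action of $A \otimes B\op$; here $A = UL(\g)$ and $B = U(\glie)$, so $B\op = U(\glie)\op$, and a commuting right $U(\glie)$-action is the same as a right $\glie$-action since $U(\glie)$ is the enveloping algebra of $\glie$. There is essentially no hard part; the only thing to be careful about is bookkeeping the $\op$ and the direction of the $\glie$-action, and confirming that the right $\glie$-action used on the $UL(\g)$-factor is the one restricted along $s_0$ (consistent with the notation $UL(\g)\rreg$ introduced via the Remark following Definition \ref{defn:as_sym}), so that the diagonal action is genuinely a right $\glie$-module structure.
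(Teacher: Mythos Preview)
Your proposal is correct and complete. The paper in fact states this lemma without proof, treating it as a routine verification; your argument supplies exactly the details one would expect---the diagonal tensor product of right $\glie$-modules is again a right $\glie$-module, and commutation with the left $UL(\g)$-action reduces to associativity of multiplication in $UL(\g)$ on the first factor and to the trivial observation that left multiplication by $b$ leaves the $\g^{\otimes n}$-factors untouched.
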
 

\begin{rem}
\label{rem:right_glie_ULg}
The right action of $\glie$ on $UL(\g)$ is given by $\Phi  \otimes \overline{y}\mapsto \Phi  r_y$, for $\overline{y} \in \glie$ and $\Phi  \in UL(\g)$. This corresponds to the right $U(\glie)$-module structure on $UL(\g)$ given by restriction along $s_0 : U(\glie) \rightarrow UL(\g)$.
\end{rem}

\begin{lem}
\label{lem:glie-linear}
For $\g$ a Leibniz algebra, the morphisms of left $UL(\g)$-modules $UL (\g) \otimes \g \rightrightarrows UL(\g)$ induced respectively by $1 \otimes x \mapsto l_x$ and $1 \otimes x \mapsto r_x$, for $x \in \g$, are $\glie$-linear with respect to the action given by Lemma \ref{lem:right_action_X}.
\end{lem}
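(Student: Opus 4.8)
The plan is to reduce the claim to a computation in the ring $UL(\g)$, using the description of the right $\glie$-action from Lemma \ref{lem:right_action_X} (and the explicit formula from Remark \ref{rem:right_glie_ULg}). First I would recall that a morphism $f : UL(\g)\otimes \g \to UL(\g)$ of left $UL(\g)$-modules is $\glie$-linear if and only if, for every generator $1\otimes x$ and every $\overline y \in \glie$, one has $f\big((1\otimes x)\cdot \overline y\big) = f(1\otimes x)\cdot \overline y$, since the left $UL(\g)$-action and the right $\glie$-action commute (again by Lemma \ref{lem:right_action_X}), and the $1\otimes x$ generate the source as a left $UL(\g)$-module. So everything comes down to unwinding the two sides of this identity.

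Next I would spell out the diagonal right $\glie$-action on $UL(\g)\otimes \g$ via Remark \ref{rem:right_glie_ULg}: $(a\otimes x)\cdot \overline y = (a\, r_y)\otimes x + a\otimes (xy)$, where the first term uses the right $\glie$-action on $UL(\g)$ (namely right multiplication by $r_y$) and the second uses the right $\glie$-module structure on $\g\rreg$, i.e. the bracket $x\mapsto xy$ (well-defined on $\glie$ since $x(y^2)=0$ follows from the right Leibniz relation applied to $M=\g$, as in Proposition \ref{prop:right-restriction}). For $a=1$ this reads $(1\otimes x)\cdot \overline y = r_y \otimes x + 1\otimes(xy)$. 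Hence for the map sending $1\otimes x\mapsto r_x$ I must check $r_{xy} = r_x r_y - \ldots$; precisely, applying the map gives $r_y\, r_x + r_{xy}$, while the right-hand side $f(1\otimes x)\cdot\overline y = r_x\cdot\overline y = r_x r_y$, so the required identity is $r_x r_y = r_y r_x + r_{xy}$, which is exactly the defining relation $r_{xy}=[r_x,r_y]$ of $UL(\g)$ in Definition \ref{defn:UL}. Likewise, for the map $1\otimes x\mapsto l_x$, the left-hand side is $r_y\, l_x + l_{xy}$ and the right-hand side is $l_x r_y$, so the identity needed is $l_x r_y = r_y l_x + l_{xy}$, i.e. $l_{xy} = [l_x, r_y]$, which is again one of the defining relations of $UL(\g)$.

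So the proof is essentially: reduce to generators by $UL(\g)$-linearity, expand the diagonal action, and observe that the two resulting identities are precisely the two relations $r_{xy}=[r_x,r_y]$ and $l_{xy}=[l_x,r_y]$ imposed in Definition \ref{defn:UL}. The only subtlety — and the one point worth stating carefully rather than dismissing — is checking that the right $\glie$-action on the source is well-defined through $\glie$ (not merely through $\g$): this is exactly the fact, already used in Proposition \ref{prop:right-restriction} and built into Lemma \ref{lem:right_action_X}, that $x^2$ acts trivially on the right on $UL(\g)$ and that $\g\rreg$ is a genuine $\glie$-module. Given that, there is no real obstacle; the computation is short and purely formal.
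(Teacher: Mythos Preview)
Your proposal is correct and follows essentially the same approach as the paper: reduce to generators $1\otimes x$, expand the diagonal $\glie$-action as $r_y\otimes x + 1\otimes xy$, and identify the resulting equalities with the defining relations $r_{xy}=[r_x,r_y]$ and $l_{xy}=[l_x,r_y]$ of $UL(\g)$. The only difference is that you make explicit the reduction step and the well-definedness over $\glie$, which the paper leaves implicit.
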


\begin{proof}
First consider the map induced by $1 \otimes x \mapsto l_x$. The $\glie$ action by $\overline{y} \in \glie$ sends $1 \otimes x$ to $r_y \otimes x + 1 \otimes xy$, which maps to $r_y l_x + l_{xy} \in UL(\g)$.  Correspondingly, the $\glie$-action on $UL(\g)$ sends $l_x$ to $l_x r_y$. The identity $l_{xy} = [l_x , r_y]$ implies that these are equal.
 
 Similarly, consider $1 \otimes x \mapsto r_x$. The image of $r_y \otimes x + 1 \otimes xy$ under this map is $r_y r_x + r_{xy}$, whereas the $\glie$-action on $UL(\g)$ sends $r_x$ to $r_x r_y$. The identity $r_{xy}= [r_x, r_y]$ implies that these are equal. 
\end{proof}

By construction, the complexes $W_\bullet (\g)\kurd $ and $V_\bullet (\g)\kurd$ admit augmentations 
in the category of left $UL(\g)$-modules:
\begin{eqnarray*}
W_\bullet (\g)\kurd & \stackrel{d_1} {\rightarrow} & U(\glie) \\
V_\bullet (\g)\kurd & \stackrel{d_0} {\rightarrow} & U(\glie)
\end{eqnarray*}
induced by $d_1$ and $d_0$ respectively. The key fact is that the natural right $\glie$-action on $U(\glie)$  lifts using the action of Lemma \ref{lem:right_action_X} to $W_\bullet(\g)\kurd$ and $V_\bullet(\g)\kurd$. 

\begin{thm}
\label{thm:glie_action_kurd}
For $\g$ a Leibniz algebra,
\begin{enumerate}
\item 
$W_\bullet (\g)\kurd$ is a complex of left $UL(\g)$, right $\glie$-modules; 
\item 
$V_\bullet (\g)\kurd$ is a complex of left $UL(\g)$, right $\glie$-modules; moreover 
  \begin{eqnarray}
  \label{eqn:V_kurd}
 0
 \rightarrow 
 UL(\g)
 \rightarrow 
 V_\bullet (\g)\kurd 
 \rightarrow 
( W_\bullet (\g)\kurd \otimes \g\rreg)[1]
 \rightarrow 
 0
 \end{eqnarray}
is a short exact sequence of complexes of left $UL(\g)$, right $\glie$-modules, where $\glie$ acts diagonally on $ W_\bullet (\g)\kurd \otimes \g\rreg$.
\end{enumerate}
There are natural isomorphisms of right $\glie$-modules:
\[
H_0 (W_\bullet (\g)\kurd) \cong U(\glie) \cong H_0 (V_\bullet (\g)\kurd)
\] 
where $U(\glie)$ is equipped with its canonical right $\glie$-module structure.
\end{thm}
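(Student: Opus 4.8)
The plan is to establish each claim by transporting the corresponding statement for the complexes $W_\bullet(\g)$, $V_\bullet(\g)$ in right $UL(\g)$-modules through the Kurdiani involution, then supplying the right $\glie$-actions by hand and checking that the differentials respect them. Concretely, for part (1) I would first write down explicitly the candidate right $\glie$-action on $W_n(\g)\kurd = UL(\g)\otimes \g^{\otimes n}$ given by Lemma \ref{lem:right_action_X} (the diagonal action, with $UL(\g)$ a right $\glie$-module via $s_0$, i.e. $\Phi \mapsto \Phi r_y$, as in Remark \ref{rem:right_glie_ULg}), and verify that the differential of Theorem \ref{thm:left_module_d1_d0}(1) is $\glie$-linear. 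This last point is where the work lies: the differential is a sum of three types of terms, and the ``leading'' term $1\otimes\langle x_1,\dots,x_n\rangle \mapsto l_{x_1}\otimes\langle x_2,\dots,x_n\rangle$ together with the ``$r$''-terms are handled by Lemma \ref{lem:glie-linear} (applied with the remaining tensor factors as spectators, using that the action is diagonal), while the purely ``bracket'' terms involving $x_ix_j$ require the Leibniz relation $x_i(x_jz)=(x_ix_j)z-(x_iz)x_j$ to reconcile ``differentiate then act'' with ``act then differentiate''; this is exactly the kind of computation underlying \cite[Proposition 3.1]{LP}, and Remark \ref{rem:caveat_LP} flags the sign/convention subtlety to watch. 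Once $\glie$-linearity of $d$ is checked, the commutation of the $\glie$-action with the left $UL(\g)$-structure is already granted by Lemma \ref{lem:right_action_X}, so $W_\bullet(\g)\kurd$ is a complex of left $UL(\g)$, right $\glie$-modules.

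For part (2), the short exact sequence of complexes \eqref{eqn:V_kurd} is, as complexes of left $UL(\g)$-modules, precisely the one of Theorem \ref{thm:left_module_d1_d0}(2); what remains is to check that all three terms carry compatible right $\glie$-actions making the maps $\glie$-linear. The term $UL(\g)$ in degree zero gets the action of Remark \ref{rem:right_glie_ULg}; the term $W_\bullet(\g)\kurd \otimes \g\rreg$ gets the diagonal action (using part (1) on the first factor and the adjoint $\glie$-action on $\g\rreg$ from Example \ref{exam:adjoint_rep}); and $V_\bullet(\g)\kurd$ in positive degrees coincides with $W_\bullet(\g)\kurd\otimes\g\rreg$ up to the degree shift, so it inherits that action, while $V_0(\g)\kurd = UL(\g)$. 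The only genuinely new verification is that the connecting differential $UL(\g)\otimes\g \to UL(\g)$, $1\otimes x \mapsto l_x$, is $\glie$-linear --- but that is exactly the first assertion of Lemma \ref{lem:glie-linear}. The inclusion $UL(\g)\hookrightarrow V_\bullet(\g)\kurd$ and the projection onto $(W_\bullet(\g)\kurd\otimes\g\rreg)[1]$ are then $\glie$-linear essentially by construction, so \eqref{eqn:V_kurd} is a short exact sequence in the stated category.

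Finally, for the homology computation, $W_\bullet(\g)$ resolves $U(\glie)\leibasym$ and $V_\bullet(\g)$ resolves $U(\glie)\leibsym$ by Theorem \ref{thm:Wg} and Proposition \ref{prop:resolution_Uglie_sym}, so by Theorem \ref{thm:left_module_d1_d0} the complexes $W_\bullet(\g)\kurd$ and $V_\bullet(\g)\kurd$ resolve $^{d_1}U(\glie)$ and $^{d_0}U(\glie)$ respectively; thus $H_0$ of each is $U(\glie)$ as a left $UL(\g)$-module. It remains only to identify the induced right $\glie$-action on this $H_0$ with the canonical one on $U(\glie)$. For this I would observe that the augmentations $W_\bullet(\g)\kurd \xrightarrow{d_1} U(\glie)$ and $V_\bullet(\g)\kurd \xrightarrow{d_0} U(\glie)$ are, by our choice of actions (the $UL(\g)$-action on $U(\glie)$ through $\glie$ restricting to the canonical right $\glie$-action), $\glie$-linear: indeed on $W_0(\g)\kurd = UL(\g)$ the action is $\Phi\mapsto\Phi r_y$ and $d_1(\Phi r_y) = d_1(\Phi)\overline{y}$ since $d_1(r_y)=\overline{y}$, and likewise $d_0(r_y)=\overline{y}$ for $V_\bullet(\g)\kurd$. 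Hence the isomorphism $H_0 \cong U(\glie)$ is one of right $\glie$-modules. The main obstacle throughout is the $\glie$-linearity of the differentials of $W_\bullet(\g)\kurd$ and $V_\bullet(\g)\kurd$, i.e.\ the bracket-term computation, which must be done carefully with Loday--Pirashvili's conventions in mind as signalled by Remark \ref{rem:caveat_LP}; the rest is bookkeeping that Lemmas \ref{lem:right_action_X} and \ref{lem:glie-linear} have already set up.
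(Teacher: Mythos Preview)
Your proposal is correct and follows essentially the same route as the paper's proof: reduce to checking $\glie$-linearity of the differential, split the differential of $W_\bullet(\g)\kurd$ into its $l_{x_1}$-, $r_{x_i}$-, and bracket-type terms, invoke Lemma~\ref{lem:glie-linear} for the first two and the Leibniz relation for the third, then handle $V_\bullet(\g)\kurd$ by observing that only the degree-$1$-to-$0$ differential needs checking, again via Lemma~\ref{lem:glie-linear}. Your treatment of $H_0$ via $\glie$-linearity of the augmentations $d_0,d_1$ (using $d_i(r_y)=\overline{y}$) is the same as the paper's appeal to $d_0s_0=\id=d_1s_0$ together with Remark~\ref{rem:right_glie_ULg}.
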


\begin{proof}
The final statement follows from the identities $d_0 s_0 = \id_{U(\glie)} = d_1 s_0$ and the identification of the right $\glie$-module structure on $UL(\g)$ given in Remark \ref{rem:right_glie_ULg}. Hence, to prove the result it suffices to show that the respective differentials are $\glie$-linear. 

First consider the case of $X_\bullet (\g)\kurd$. There are three types of term to consider in the differential:
\begin{enumerate}
\item 
the term related to $l_{x_1}$; 
\item 
the terms involving right multiplication by $r_{x_i}$ on $UL(\g)$; 
\item 
the terms involving the formation of the Leibniz product $x_i x_j$. 
\end{enumerate}
The first two contributions are treated by Lemma \ref{lem:glie-linear}. The final terms are treated by reducing to establishing the commutativity of the following diagram 
\[
\xymatrix{
(\g \otimes \g) \otimes \glie 
\ar[r]
\ar[d]_{\mu \otimes \id}
&
\g \otimes \g
\ar[d]^{\mu}
\\
\g \otimes \glie 
\ar[r]
&
\g, 
}
\]
where the horizontal morphisms are given by the $\glie$-action and $\mu$ denotes the Leibniz product of $\g$.  To see that the diagram commutes, consider the respective images of $(x_1 \otimes x_2) \otimes \overline {y}$. Passing around the top of the diagram corresponds to $(x_1 \otimes x_2) \otimes \overline {y} \mapsto (x_1y \otimes x_2) + (x_1 \otimes x_2 y) \mapsto (x_1 y) x_2 + x_1 (x_2 y)$. Passing around the bottom gives $(x_1 \otimes x_2) \otimes \overline {y} \mapsto (x_1 x_2 ) y$. These are equal by  the Leibniz relation. 
 
For the complex $V_\bullet (\g)\kurd$, since the complex identifies with $( W_\bullet (\g)\kurd \otimes \g\rreg)$ (up to shift) in strictly positive degrees, equipped with the diagonal $\glie$-action, it suffices to check that the differential between degrees $1$ and $0$ is $\glie$-linear. This is the map of left $UL(\g)$-modules $UL(\g) \otimes \g \rightarrow UL(\g)$ induced by $1 \otimes x \mapsto l_x$, hence the result follows from Lemma \ref{lem:glie-linear}.
\end{proof}

The analogous result holds for the complexes of right $UL(\g)$-modules $W_\bullet (\g)$ and $V_\bullet (\g)$. This is derived from the previous case by using the Kurdiani involution.

\begin{lem}
\label{lem:left_action_X}
For $\g$ a Leibniz algebra and $n \in \nat$, $(\g\rreg)^{\otimes n}\otimes UL(\g)$ is a left $\glie$-module for the diagonal action for the left $U(\glie)$-module ${}^{s_0} UL(\g)$ and the left $\glie$-action associated to the right $\glie$-module $ (\g\rreg)^{\otimes n}$.

This action commutes with the right $UL(\g)$-module structure, so that $(\g\rreg)^{\otimes n} \otimes UL(\g)$  acquires the structure of a right $UL(\g) \otimes U(\glie)\op$-module.
\end{lem}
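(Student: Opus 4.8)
The plan is to reduce the statement to two elementary ingredients: that $s_0 : U(\glie) \rightarrow UL(\g)$ is a morphism of associative algebras (Definition \ref{defn:s0d0d1}), and that $U(\glie)$ is a Hopf algebra, so that its category of left modules is symmetric monoidal under $\otimes_\kring$ with the diagonal action and a right module may be converted to a left module by restriction along the involution $\chi$. Since $s_0$ is an algebra map, ${}^{s_0}UL(\g)$ is a left $U(\glie)$-module, with $\overline{y} \in \glie$ acting by left multiplication by $s_0(\overline{y}) = r_y$; and the right $\glie$-module $(\g\rreg)^{\otimes n}$, with its diagonal action on the $n$ tensor-factors, determines a left $\glie$-module in the usual way, $\overline{y}$ acting by $v \mapsto -(v\overline{y})$, equivalently by restriction along $\chi$. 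Forming the tensor product over $\kring$ of these two left $U(\glie)$-modules with the diagonal action then equips $(\g\rreg)^{\otimes n} \otimes UL(\g)$ with the structure of a left $U(\glie)$-module, and hence of a left $\glie$-module; well-definedness of this diagonal action is automatic from the Hopf structure on $U(\glie)$ and requires no computation.

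Next I would check that this left $\glie$-action commutes with the evident right $UL(\g)$-module structure on $(\g\rreg)^{\otimes n} \otimes UL(\g)$, namely right multiplication on the $UL(\g)$ tensor-factor. The action of $\overline{y} \in \glie$ is the sum of a term acting only on the $(\g\rreg)^{\otimes n}$ factor, which trivially commutes with right multiplication on the other factor, and a term given by left multiplication by $r_y$ on the $UL(\g)$ factor, which commutes with right multiplication by associativity of $UL(\g)$. Having commuting left $\glie$- and right $UL(\g)$-actions is exactly the structure of a right module over $UL(\g) \otimes U(\glie)\op$, since a right $U(\glie)\op$-module is the same as a left $U(\glie)$-module; this is the assertion.

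There is no substantial obstacle here: the statement is the right-handed mirror of Lemma \ref{lem:right_action_X}, and one could alternatively deduce it from that lemma by applying the Kurdiani involution $\chi_L$ on the $UL(\g)$-side together with the conjugation $\chi$ on the $U(\glie)$-side, using their compatibility (Proposition \ref{prop:compat_chiL}); but the direct verification above is shorter. The only point requiring care is the bookkeeping of left versus right actions --- in particular passing through $\chi$ when turning the right $\glie$-module $(\g\rreg)^{\otimes n}$ into a left one, so that the diagonal action matches the sign conventions used elsewhere in the paper.
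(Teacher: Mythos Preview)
Your proposal is correct. The paper does not actually supply a proof of this lemma: it is stated without argument, the surrounding text indicating only that it is the right-handed analogue of Lemma~\ref{lem:right_action_X} and is ``derived from the previous case by using the Kurdiani involution''. Your direct verification via the Hopf-algebra structure of $U(\glie)$ and the algebra map $s_0$ is exactly the elementary check one would expect, and you also note the Kurdiani-involution route the paper alludes to; either suffices.
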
 

\begin{rem}
Explicitly, the left $\glie$-module structure on ${}^{s_0} UL(\g)$ is given by 
\begin{eqnarray*}
\glie \otimes UL(\g) & \rightarrow & UL(\g) \\
 \overline{y} \otimes \Phi   & \mapsto & r_y \Phi ,
\end{eqnarray*}
for $\Phi  \in UL(\g)$ and $y \in \g$. 

The left action on $\g$ is given for $x \in \g$ by $\overline{y} \otimes x \mapsto - xy$, where the sign arises from the conjugation $\chi$. 
\end{rem}

\begin{thm}
\label{thm:glie_action}
For $\g$ a Leibniz algebra, via the action of Lemma \ref{lem:left_action_X}:
\begin{enumerate}
\item 
$W_\bullet (\g)$ is a complex of right $UL(\g) \otimes U(\glie)\op$-modules; 
\item 
$V_\bullet (\g)$ is a complex of right  $UL(\g) \otimes U(\glie)\op$-modules; moreover 
  \begin{eqnarray}
\label{eqn:V} 
 0
 \rightarrow 
 UL(\g)
 \rightarrow 
 V_\bullet (\g)
 \rightarrow 
(\g\rreg \otimes  W_\bullet (\g) )[1]
 \rightarrow 
 0
 \end{eqnarray}
is a short exact sequence of complexes of right $UL(\g)\otimes U(\glie)\op$-modules, where $\glie$ acts diagonally on $\g\rreg \otimes  W_\bullet (\g) $.
\end{enumerate}
There are natural isomorphisms of left  $\glie$-modules:
\[
H_0 (W_\bullet (\g)) \cong U(\glie) \cong H_0 (V_\bullet (\g))
\] 
where $U(\glie)$ is equipped with the canonical left $\glie$-module structure. 
\end{thm}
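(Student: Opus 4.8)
The plan is to deduce Theorem \ref{thm:glie_action} from its `left module' counterpart, Theorem \ref{thm:glie_action_kurd}, by transporting everything across the Kurdiani involution. Concretely, the isomorphism $(Z \otimes UL(\g))\kurd \cong UL(\g) \otimes Z$ of Lemma \ref{lem:left_right_free} (applied with $Z = \g^{\otimes n}$, and more generally compatibly in $n$) identifies the complex $W_\bullet(\g)$ of right $UL(\g)$-modules with the complex $W_\bullet(\g)\kurd$ of left $UL(\g)$-modules, and similarly $V_\bullet(\g)$ with $V_\bullet(\g)\kurd$; this is precisely the construction already used to pass from Theorem \ref{thm:Wg} and Proposition \ref{prop:resolution_Uglie_sym} to Theorem \ref{thm:left_module_d1_d0}. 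So the differentials are automatically matched up, and what remains is to check that the right $\glie$-action of Lemma \ref{lem:left_action_X} on $(\g\rreg)^{\otimes n} \otimes UL(\g)$ corresponds, under this identification, to the left $\glie$-action of Lemma \ref{lem:right_action_X} on $UL(\g) \otimes \g^{\otimes n}$, once one uses the conjugation $\chi$ of $U(\glie)$ to convert left $\glie$-modules into right $\glie$-modules and vice versa.

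First I would record the compatibility of $\chi_L$ with $\chi$ via $s_0$, namely the left-hand square of Proposition \ref{prop:compat_chiL}: $\chi_L \circ s_0\op = s_0 \circ \chi$. This is exactly what is needed to see that the right $\glie$-module $UL(\g)\rreg$ (restriction along $s_0$) is carried by $\chi_L$ to the left $\glie$-module ${}^{s_0} UL(\g)$ twisted by $\chi$ — which is the content of the explicit formula $\overline{y}\otimes\Phi \mapsto r_y\Phi$ in the remark following Lemma \ref{lem:left_action_X}, since $\chi(\overline y) = -\overline y$ and $s_0(-\overline y) = -r_y$, composed with the anti-automorphism $\chi_L$. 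On the tensor factors $\g$, the twist by $\chi$ is what produces the sign in $\overline y \otimes x \mapsto -xy$. Since $\chi_L$ is an anti-automorphism and $\chi$ is an anti-automorphism of a Hopf algebra compatible with the diagonal, the diagonal right $\glie$-action on $(\g\rreg)^{\otimes n}\otimes UL(\g)$ matches the diagonal left $\glie$-action on $UL(\g)\otimes \g^{\otimes n}$ up to the coherent twist by $\chi$; and a right $\glie$-action twisted by $\chi$ is again a right $\glie$-action (this is how one produces the `left $\glie$-module' statements of the theorem from the `right $\glie$-module' statements of Theorem \ref{thm:glie_action_kurd}, reading the equivalence in the opposite direction). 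The commutation with the $UL(\g)$-action, and hence the bimodule statements over $UL(\g)\otimes U(\glie)\op$, then follow formally, since $\chi_L$ intertwines the left and right $UL(\g)$-structures.

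With the actions matched, parts (1) and (2) are immediate: $W_\bullet(\g)$ is a complex of right $UL(\g)\otimes U(\glie)\op$-modules because $W_\bullet(\g)\kurd$ is a complex of left $UL(\g)$, right $\glie$-modules by Theorem \ref{thm:glie_action_kurd}(1), and the identification is one of complexes carrying both structures; similarly for $V_\bullet(\g)$. The short exact sequence (\ref{eqn:V}) is obtained from (\ref{eqn:V_kurd}) by applying the $\chi_L$-twist, using $(W_\bullet(\g)\kurd \otimes \g\rreg)\cong \g\rreg \otimes W_\bullet(\g)$ from Lemma \ref{lem:left_right_free} once more, and noting that the diagonal $\glie$-action is preserved. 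Finally, the identification $H_0(W_\bullet(\g)) \cong U(\glie) \cong H_0(V_\bullet(\g))$ as left $\glie$-modules follows from the corresponding right $\glie$-module identification in Theorem \ref{thm:glie_action_kurd} together with Corollary \ref{cor:left_right_Uglie}, which says precisely that the $\chi_L$-twist sends $U(\glie)^{d_1}$ to ${}^{d_1}U(\glie)$ and $U(\glie)^{d_0}$ to ${}^{d_0}U(\glie)$ via $\chi$; equivalently one reads $d_0 s_0 = \id = d_1 s_0$ together with Remark \ref{rem:right_glie_ULg} on the other side.

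The main obstacle is bookkeeping the signs and the placement of the conjugation $\chi$: one must be careful that the anti-automorphism $\chi_L$ reverses the order of tensor factors (as in Lemma \ref{lem:left_right_free}, $z\otimes a \mapsto \chi_L(a)\otimes z$), so the diagonal action on an $n$-fold tensor power is carried to the diagonal action on the reversed $n$-fold tensor power, and that the Hopf-algebra antipode $\chi$ is used consistently to reinterpret `left $\glie$' as `right $\glie$' throughout — in particular the formula $\overline y\otimes x\mapsto -xy$ and the formula $\overline y\otimes\Phi\mapsto r_y\Phi$ must be exactly the images under $\chi_L$ of the corresponding formulas in Lemma \ref{lem:right_action_X} and Remark \ref{rem:right_glie_ULg}. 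Once these identifications are pinned down, no further computation is needed: every assertion transports formally from Theorem \ref{thm:glie_action_kurd}.
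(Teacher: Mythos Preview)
Your approach is correct and is exactly the one the paper indicates: the sentence preceding Lemma \ref{lem:left_action_X} says that Theorem \ref{thm:glie_action} ``is derived from the previous case by using the Kurdiani involution'', and your proposal spells out precisely this transport via $\chi_L$ and $\chi$, with the compatibility of Proposition \ref{prop:compat_chiL} and Corollary \ref{cor:left_right_Uglie} doing the work. One small slip: you refer to ``the right $\glie$-action of Lemma \ref{lem:left_action_X}'' and ``the left $\glie$-action of Lemma \ref{lem:right_action_X}'', but these are swapped --- Lemma \ref{lem:left_action_X} gives a \emph{left} $\glie$-action and Lemma \ref{lem:right_action_X} a \emph{right} one; the substance of your argument is unaffected.
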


\begin{rem}
\label{rem:caveat_LP}
In \cite[Section 3]{LP}, Loday and Pirashvili work with a {\em right} action. Moreover,  they define the action of $y \in \g$ ({\em not} $\glie$) on $UL(\g)$  by left multiplication by $l_y$ (as opposed to {\em left} multiplication by $-r_y$ as above). That this is also compatible with the differential is a consequence of the relation $(r_y + l_y ) l_x =0$. However, this does not in general pass to an action of $\glie$, since $l_y$  depends upon $y$ and not only upon $\overline{y} \in \glie$ in general. 
\end{rem}

 \section{Total derived functors}
\label{sect:total} 
 
 The resolutions of Section \ref{sect:resolutions}  give models for the total left derived functors of $\lbs , \  \lbas : \leibmod_\g \rightrightarrows \liemod_\glie$ and the total right derived functors of $(-)\sinv, \ (-)\asinv : \leibmod_\g \rightrightarrows \liemod_\glie$. 
 
 \subsection{The total left derived functors}
 
 Theorems \ref{thm:left_module_d1_d0} and \ref{thm:glie_action_kurd} established that $W_\bullet (\g)\kurd$ and $V_\bullet (\g) \kurd$ are complexes of left $UL(\g)$, right $\glie$-modules such that, as left $UL(\g)$-modules:
 \begin{enumerate}
 \item 
 $W_\bullet (\g)\kurd$ is a free resolution of ${}^{d_1} U(\glie)$; 
 \item 
 $V_\bullet (\g)\kurd$ is a free resolution of ${}^{d_0} U(\glie)$.
 \end{enumerate}

 \begin{cor}
 \label{cor:total_left}
Let  $\g$ be a Leibniz algebra and $M$ a $\g$-module. 
\begin{enumerate}
\item  
 The total left derived functor $\mathbb{L}\lbs M$ 
 is represented by the complex of right $\glie$-modules   $$
 M \otimes_{UL(\g)} W_\bullet (\g)\kurd,
 $$ 
 where the right $\glie$-action is provided by that on $ W_\bullet (\g)\kurd$.
\item 
The total left derived functor $\mathbb{L}\lbas M$ is represented by the complex of right $\glie$-modules   $$
 M \otimes_{UL(\g)} V_\bullet (\g)\kurd,
 $$ 
 where the right $\glie$-action is provided by that on $V_\bullet (\g)\kurd$.
 \end{enumerate}
Moreover, the short exact sequence of complexes (\ref{eqn:V_kurd}) induces a short exact sequence of complexes in right $\glie$-modules
\[
0
\rightarrow 
M\rreg 
\rightarrow 
 M \otimes_{UL(\g)} V_\bullet (\g)\kurd
 \rightarrow 
  M \otimes_{UL(\g)} (W_\bullet (\g)\kurd \otimes \g\rreg)[1]
  \rightarrow 
  0.
\] 
 \end{cor}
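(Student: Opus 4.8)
The plan is to recognise $\lbs$ and $\lbas$ as induction functors, to compute their total left derived functors by resolving the coefficient bimodule, and then to transfer this to the resolutions $W_\bullet(\g)\kurd$ and $V_\bullet(\g)\kurd$ by a balancing (double complex) argument, keeping track of the right $\glie$-action throughout.

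For the first two assertions, I would begin from the identification, furnished by Proposition \ref{prop:leibsym_adjoints} (equivalently Proposition \ref{prop:induct_coinduct_adjoints} applied to $d_1 : UL(\g) \to U(\glie)$), of $\lbs$ with the induction functor $- \otimes_{UL(\g)} {}^{d_1}U(\glie)$; here ${}^{d_1}U(\glie)$ is the $(UL(\g), U(\glie))$-bimodule that is $U(\glie)$ with left action through $d_1$ and right action by multiplication, and the right $\glie$-structure on $\lbs M$ is the one induced on the coefficient factor. Thus, for a projective resolution $P_\bullet \to M$ in $\leibmod_\g \simeq \uassmod_{UL(\g)}$, the complex of right $\glie$-modules $\lbs P_\bullet = P_\bullet \otimes_{UL(\g)} {}^{d_1}U(\glie)$ represents $\mathbb{L}\lbs M$. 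I would then form the double complex of right $\glie$-modules $P_\bullet \otimes_{UL(\g)} W_\bullet(\g)\kurd$, with $\glie$ acting through the factor $W_\bullet(\g)\kurd$ as in Theorem \ref{thm:glie_action_kurd}(1). Because each $W_n(\g)\kurd = UL(\g) \otimes \g^{\otimes n}$ is free, hence flat, as a left $UL(\g)$-module, tensoring it with the quasi-isomorphism $P_\bullet \to M$ gives a quasi-isomorphism, so the total complex is $\glie$-linearly quasi-isomorphic to $M \otimes_{UL(\g)} W_\bullet(\g)\kurd$. Because each $P_m$ is flat over $UL(\g)$ and $W_\bullet(\g)\kurd \to {}^{d_1}U(\glie)$ is a resolution whose augmentation is $\glie$-linear (the last assertion of Theorem \ref{thm:glie_action_kurd}), the total complex is also $\glie$-linearly quasi-isomorphic to $P_\bullet \otimes_{UL(\g)} {}^{d_1}U(\glie) = \lbs P_\bullet$. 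Putting these together shows that $M \otimes_{UL(\g)} W_\bullet(\g)\kurd$ represents $\mathbb{L}\lbs M$ as a complex of right $\glie$-modules; in homological degree zero the right exactness of $M \otimes_{UL(\g)} -$ combined with $H_0(W_\bullet(\g)\kurd) \cong U(\glie)$ (with its canonical right $\glie$-structure) recovers $\lbs M$ with the expected $\glie$-action, confirming that the structure is the stated one. Assertion (1) follows, and assertion (2) is proved by the identical argument after replacing $d_1$, $\lbs$, $W_\bullet(\g)\kurd$ by $d_0$, $\lbas$, $V_\bullet(\g)\kurd$ and appealing to Proposition \ref{prop:antisym_adjunction}, Theorem \ref{thm:left_module_d1_d0}(2) and Theorem \ref{thm:glie_action_kurd}(2).

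For the final short exact sequence, I would apply $M \otimes_{UL(\g)} -$ to the short exact sequence of complexes of left $UL(\g)$, right $\glie$-modules (\ref{eqn:V_kurd}). Its quotient $(W_\bullet(\g)\kurd \otimes \g\rreg)[1]$ is, in each homological degree, the free left $UL(\g)$-module $UL(\g) \otimes \g^{\otimes n} \otimes \g$, so the sequence splits degreewise over $UL(\g)$ and $M \otimes_{UL(\g)} -$ preserves its exactness, producing a short exact sequence of complexes of right $\glie$-modules. By assertions (1) and (2), its middle and right-hand terms are the complexes displayed in the statement, and $M \otimes_{UL(\g)} UL(\g) \cong M$ with $\overline{y} \in \glie$ acting by $m \mapsto m r_y$ — since, by Remark \ref{rem:right_glie_ULg}, the right $\glie$-structure on the subcomplex $UL(\g)$ appearing in (\ref{eqn:V_kurd}) is restriction along $s_0$ — and this is exactly $M\rreg$ by the description of $(-)\rreg$ following Definition \ref{defn:as_sym}.

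I do not expect a substantial obstacle. The one point needing care is that the auxiliary right $\glie$-structures survive every quasi-isomorphism used in the double complex argument — which they do, because $\glie$ acts solely through the $W_\bullet(\g)\kurd$ (respectively $V_\bullet(\g)\kurd$) tensor factor and the augmentations to ${}^{d_1}U(\glie)$ and ${}^{d_0}U(\glie)$ are $\glie$-linear by Theorem \ref{thm:glie_action_kurd} — and that the structure induced in homological degree zero is the canonical one, which is precisely the last statement of that theorem.
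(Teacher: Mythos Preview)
Your proposal is correct and follows essentially the same route as the paper: identify $\lbs$ (respectively $\lbas$) with induction $-\otimes_{UL(\g)}{}^{d_1}U(\glie)$ (respectively ${}^{d_0}U(\glie)$), then compute the total left derived functor by replacing the bimodule by its free resolution $W_\bullet(\g)\kurd$ (respectively $V_\bullet(\g)\kurd$), and finally tensor the short exact sequence (\ref{eqn:V_kurd}) with $M$. The only difference is emphasis: the paper invokes Theorems \ref{thm:left_module_d1_d0} and \ref{thm:glie_action_kurd} and takes the balancing of $\tor$ for granted, whereas you spell out the double-complex argument and the degreewise splitting explicitly; this extra care is harmless and the $\glie$-equivariance checks you flag are exactly the content of Theorem \ref{thm:glie_action_kurd}.
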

 
\begin{proof}
As in Proposition \ref{prop:leibsym_adjoints}, the functor $\lbs : \leibmod_\g \rightarrow \liemod_\glie$ identifies with the induction functor $- \otimes_{UL(\g)} {}^{d_1} U(\glie)$, where the $\glie$-action is provided by the right action on $U(\glie)$. The associated derived functor is represented by $- \otimes_{UL(\g)} W_\bullet (\g)\kurd$ by  Theorems \ref{thm:left_module_d1_d0} and \ref{thm:glie_action_kurd}. 

For the antisymmetric case, as in Proposition \ref{prop:antisym_adjunction}, the functor $\lbs : \leibmod_\g \rightarrow \liemod_\glie$ identifies with the induction functor $- \otimes_{UL(\g)} {}^{d_0} U(\glie)$. The conclusion follows analogously.

The final statement is immediate.
\end{proof} 
 
 \begin{rem}
\label{rem:identify_lder_complexes}
 The complexes appearing in Corollary \ref{cor:total_left} identify explicitly as follows:
 \begin{enumerate}
 \item 
$(M \otimes_{UL(\g)} W_\bullet (\g)\kurd)_n = M\otimes \g^{\otimes n}$ with  differential $ M\otimes \g^{\otimes n} \rightarrow M \otimes \g^{\otimes n-1}$ given  for $m \in M$ and $x_i \in \g$ by:
 \begin{eqnarray*}
  d (m \otimes \langle x_1 , \ldots ,x_n \rangle)
  &=&{x_1}m \otimes \langle x_2, \ldots , x_n\rangle + 
 \sum_{i\geq 1} (-1)^{i+1} m {x_i} \otimes \langle  x_1 , \ldots \widehat{x_i} , \ldots, x_n \rangle + \\
&&
 \sum_{1 \leq i< j} (-1)^{j+1} m \otimes \langle  x_1 , \ldots, x_ix_j, \ldots,  \widehat{x_j} , \ldots, x_n \rangle.
 \end{eqnarray*} 
 \item 
$
  (M \otimes_{UL(\g)} V_\bullet (\g)\kurd)_n = 
  M\otimes \g^{\otimes n -1} \otimes \g $ for $n>0$ and $M$ for $n=0$. 
 For $n\geq 2$, the differential 
$ M\otimes \g^{\otimes n -1}  \otimes \g
 \rightarrow
  M \otimes \g^{\otimes n-2}\otimes \g
  $ is given  for $m \in M$ and $x_i \in \g$ by:
 \begin{eqnarray*}
  d (m \otimes \langle x_1 , \ldots ,x_{n-1} \rangle \otimes x_n )
  &=&{x_1}m \otimes \langle x_2, \ldots , x_{n-1}\rangle \otimes  x_n
  + 
 \sum_{i= 1}^{n-1} (-1)^{i+1} m {x_i} \otimes \langle  x_1 , \ldots \widehat{x_i} , \ldots, x_{n-1} \rangle \otimes x_n
  + \\
&&
 \sum_{1 \leq i< j\leq n-1} (-1)^{j+1} m \otimes \langle  x_1 , \ldots, x_ix_j, \ldots,  \widehat{x_j} , \ldots, x_{n-1} \rangle \otimes x_n.
 \end{eqnarray*}
 The differential $M\otimes  \g
 \rightarrow
  M$ is given by $m \otimes x \mapsto  x m$. 
\end{enumerate} 
 \end{rem}

 \begin{rem}
 The final statement of Corollary \ref{cor:total_left} can be interpreted in the  derived category of  $\glie$-modules as giving the distinguished triangle:
 \[
 M\rreg 
 \rightarrow 
 \mathbb{L} \lbas M
 \rightarrow 
 \mathbb{L} \lbs M \otimes \g\rreg [1]
 \rightarrow .
 \]
 \end{rem}

In the symmetric case, the associated derived functors identify with Leibniz homology by  \cite{LP}:

\begin{prop} 
\label{prop:hom_lder}
For $\g$ a Leibniz algebra and $M$ a $\g$-module, there are natural isomorphisms:
\[
\mathbb{L}_*\lbs M
 \cong 
\tor _*^{UL(\g)} (M ,{}^{d_1} U(\glie))
\cong 
HL_* (\g; M\kurd).
\]
\end{prop}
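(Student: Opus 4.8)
The plan is to read off both isomorphisms from the explicit complex representing $\mathbb{L}\lbs M$ produced in Corollary~\ref{cor:total_left}, together with Loday and Pirashvili's description of Leibniz homology in \cite{LP}; there is essentially no new computation involved. For the first isomorphism, Corollary~\ref{cor:total_left}(1) represents $\mathbb{L}\lbs M$, as a complex of right $\glie$-modules, by $M \otimes_{UL(\g)} W_\bullet(\g)\kurd$. By Theorem~\ref{thm:left_module_d1_d0}(1), $W_\bullet(\g)\kurd$ is a free --- hence flat --- resolution of ${}^{d_1}U(\glie)$ in left $UL(\g)$-modules, so the homology of $M \otimes_{UL(\g)} W_\bullet(\g)\kurd$ is $\tor^{UL(\g)}_*(M, {}^{d_1}U(\glie))$. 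This gives the first isomorphism as graded $\kring$-vector spaces, natural in $M$; to promote it to an isomorphism of right $\glie$-modules one checks that the $\glie$-action placed on $W_\bullet(\g)\kurd$ in Theorem~\ref{thm:glie_action_kurd} agrees with the one induced on $\tor^{UL(\g)}_*(M, {}^{d_1}U(\glie))$ by the right multiplication of $U(\glie)$ on itself, which commutes with the left $UL(\g)$-structure; this is precisely the identification recorded in Remark~\ref{rem:right_glie_ULg} and Lemma~\ref{lem:right_action_X}.

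For the second isomorphism, forget the $\glie$-action. By Remark~\ref{rem:identify_lder_complexes}(1), $M \otimes_{UL(\g)} W_\bullet(\g)\kurd$ is the complex with $n$-th term $M \otimes \g^{\otimes n}$ whose differential is built from the left action $x_1 m$, the right actions $m x_i$, and the Leibniz products $x_i x_j$. This is exactly the Leibniz chain complex computing $HL_*(\g; M\kurd)$, by \cite{LP}. The coefficient enters as the left $UL(\g)$-module $M\kurd$, rather than as $M$ itself, because the passage between right and left modules here is effected by the Kurdiani involution $\chi_L$ (Proposition~\ref{prop:involution}); this can be made precise by applying the isomorphism $(Z \otimes UL(\g))\kurd \cong UL(\g)\otimes Z$ of Lemma~\ref{lem:left_right_free} levelwise to rewrite $M \otimes_{UL(\g)} W_\bullet(\g)\kurd$ as $W_\bullet(\g) \otimes_{UL(\g)} M\kurd$, which is manifestly the Leibniz chain complex of the left coefficient module $M\kurd$.

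The only point I expect to require real care is this last left/right bookkeeping: one must match the handedness conventions implicit in \cite{LP}'s definition of Leibniz homology with the right-$UL(\g)$-module conventions of the present paper, and verify that the Kurdiani twist $(-)\kurd$ is applied to the correct tensor factor and with the correct signs. Once the conventions are pinned down, everything reduces to a direct transcription of Corollary~\ref{cor:total_left}, Theorem~\ref{thm:left_module_d1_d0} and Remark~\ref{rem:identify_lder_complexes} on the one hand, and a citation of \cite{LP} on the other.
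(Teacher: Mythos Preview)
Your proposal is correct and follows exactly the line of argument implicit in the paper; indeed the paper gives no proof of this proposition at all, merely introducing it with the phrase ``the associated derived functors identify with Leibniz homology by \cite{LP}''. Your expansion of this citation---identifying $\lbs$ with induction along $d_1$ to obtain the $\tor$ description, and then unwinding the Kurdiani involution to match the explicit complex of Remark~\ref{rem:identify_lder_complexes} with the Leibniz chain complex of \cite{LP}---is precisely the intended justification, and your caution about the left/right bookkeeping is well placed but resolves as you indicate.
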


\subsection{The total right derived functors}

The consideration of the total right derived functors of $(-)\asinv, \  (-)\sinv : \leibmod_\g \rightrightarrows \liemod_\glie$ is analogous, using the complexes $W_\bullet (\g)$ and $V_\bullet (\g)$ of right $UL(\g)$, left $\glie$-modules provided by Theorem \ref{thm:Wg}, Proposition \ref{prop:resolution_Uglie_sym}, with   the $\glie$-action given by Theorem \ref{thm:glie_action}. In particular, as complexes of right $UL(\g)$-modules:
\begin{enumerate}
\item 
$W_\bullet (\g)$ is a free resolution of $U(\glie) \leibasym$; 
\item 
$V_\bullet (\g)$ is a free resolution of $U(\glie) \leibsym$.
\end{enumerate}

\begin{cor}
\label{cor:total_right}
Let $\g$ be a Leibniz algebra and $M$ a $\g$-module. 
\begin{enumerate}
\item 
The total right derived functor $\mathbb{R} (-)\asinv M$ is represented by the cochain complex of right $\glie$-modules:
\[
\hom_{UL(\g)} (W_\bullet(\g), M),
\]
where the right $\glie$-action is induced by the left action on $W_\bullet (\g)$. 
\item 
The total right derived functor $\mathbb{R} (-)\sinv M$ is represented by the cochain complex of right $\glie$-modules:
\[
\hom_{UL(\g)} (V_\bullet(\g), M),
\]
where the right $\glie$-action is induced by the left action on $V_\bullet (\g)$.
\end{enumerate}
Moreover, the short exact sequence of complexes (\ref{eqn:V}) induces a short exact sequence of cochain complexes of right $\glie$-modules:
\[
0
\rightarrow 
\hom_\kring (\g\rreg, \hom_{UL(\g)} (W_\bullet (\g), M ))[-1]
\rightarrow 
\hom_{UL(\g)} (V_\bullet(\g), M) 
\rightarrow 
M\rreg 
\rightarrow 
0. 
\]
\end{cor}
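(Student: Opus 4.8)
The plan is to treat (1) and (2) by realising $(-)\asinv$ and $(-)\sinv$ as $\hom_{UL(\g)}$-functors and computing their right derived functors by balancing against the free resolutions $W_\bullet(\g)$ and $V_\bullet(\g)$; the final short exact sequence is then obtained by applying $\hom_{UL(\g)}(-,M)$ to the degreewise split sequence (\ref{eqn:V}) and identifying the outer terms.

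For (1), I would begin from Corollary \ref{cor:right_inv_identify}, which gives a natural isomorphism $(-)\asinv \cong \hom_{UL(\g)}(U(\glie)\leibasym, -)$; being a right adjoint this functor is left exact (Proposition \ref{prop:antisym_adjunction}), so $\mathbb{R}(-)\asinv M$ is computed from an injective resolution $M \to I^\bullet$ in $\leibmod_\g$ as $(I^\bullet)\asinv$. Since $W_\bullet(\g)$ is a free resolution of $U(\glie)\leibasym$ (Theorem \ref{thm:Wg}), I would consider the first-quadrant double complex $C^{p,q} := \hom_{UL(\g)}(W_p(\g), I^q)$. Taking cohomology first in $q$ uses that each $W_p(\g)$ is free, hence projective, so $\hom_{UL(\g)}(W_p(\g),-)$ is exact and the $q$-cohomology collapses onto the row $\hom_{UL(\g)}(W_\bullet(\g), M)$; taking cohomology first in $p$ uses that each $I^q$ is injective, so $\hom_{UL(\g)}(-, I^q)$ is exact and, as $W_\bullet(\g)$ resolves $U(\glie)\leibasym$, the $p$-cohomology collapses onto the column $(I^\bullet)\asinv$. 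The two edge maps into $\mathrm{Tot}\,C^{\bullet,\bullet}$ are therefore quasi-isomorphisms, yielding $\hom_{UL(\g)}(W_\bullet(\g), M) \simeq \mathbb{R}(-)\asinv M$. To promote this to $D(\liemod_\glie)$, I would invoke Theorem \ref{thm:glie_action}: the left $\glie$-action on $W_\bullet(\g)$ commutes with the right $UL(\g)$-action, so every $\hom_{UL(\g)}(W_p(\g), -)$ carries the stated right $\glie$-action and the two edge maps are $\glie$-linear; crucially, the augmentation $W_\bullet(\g) \to U(\glie)\leibasym$ is $\glie$-equivariant for the canonical structure (the $H_0$-identification in Theorem \ref{thm:glie_action}), which forces the resulting right $\glie$-action on $\hom_{UL(\g)}(W_\bullet(\g),M)$ to agree with the canonical one carried by $\mathbb{R}(-)\asinv M$. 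Part (2) is proved identically, substituting $V_\bullet(\g)$, $U(\glie)\leibsym$ and $(-)\sinv$ (Corollary \ref{cor:sinv}, Proposition \ref{prop:resolution_Uglie_sym}, Proposition \ref{prop:leibsym_adjoints}) for $W_\bullet(\g)$, $U(\glie)\leibasym$ and $(-)\asinv$.

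For the final assertion I would apply the additive functor $\hom_{UL(\g)}(-,M)$ to (\ref{eqn:V}). That sequence is degreewise split as right $UL(\g)$-modules, since in each homological degree one of its three terms vanishes; hence $\hom_{UL(\g)}(-,M)$ preserves degreewise exactness and returns a short exact sequence of cochain complexes of right $\glie$-modules. The quotient term is $\hom_{UL(\g)}(UL(\g), M) \cong M$ via $f \mapsto f(1)$, and under the left $\glie$-action ${}^{s_0}UL(\g)$ of Lemma \ref{lem:left_action_X} ($\overline{y}\cdot\Phi = r_y\Phi$) the induced right $\glie$-action becomes $m\cdot\overline{y} = m r_y = my$, i.e. exactly $M\rreg$; the sub term is identified by the $\kring$-linear tensor–hom adjunction $\hom_{UL(\g)}(\g\rreg \otimes W_\bullet(\g), M) \cong \hom_\kring(\g\rreg, \hom_{UL(\g)}(W_\bullet(\g), M))$, with the homological shift $[1]$ passing to the cohomological shift $[-1]$ recorded in the statement.

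The hard part will be the $\glie$-equivariance of this last adjunction. I must verify that the right $\glie$-action coming from the diagonal left $\glie$-action on $\g\rreg \otimes W_\bullet(\g)$ — which acts on the $\g\rreg$-factor by $\overline{y}\cdot x = -xy$ (the remark following Lemma \ref{lem:left_action_X}) — corresponds under the adjunction to the diagonal right $\glie$-action of the closed structure on $\hom_\kring(\g\rreg,-)$ of Proposition \ref{prop:closed_tensor_lie}, where $fh(n) = f(n)h - f(nh)$. A direct evaluation on a simple tensor $x \otimes w$ shows that the contribution of the action on the $\g\rreg$-factor reproduces the subtraction term $-f(nh)$ while the contribution of the action on the $\hom_{UL(\g)}(W_\bullet(\g),M)$-factor reproduces the term $f(n)h$, so the signs are consistent and the adjunction is $\glie$-linear. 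Alongside this I would keep careful track of the homological-versus-cohomological shift conventions fixed in the introduction, so that $(\g\rreg \otimes W_\bullet(\g))[1]$ indeed contributes the cohomological shift $[-1]$ appearing in the displayed sequence.
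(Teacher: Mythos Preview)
Your proposal is correct and follows the approach the paper leaves implicit: no proof is given for this corollary, since it is the evident dual of the brief argument for Corollary~\ref{cor:total_left}, using Corollaries~\ref{cor:sinv} and~\ref{cor:right_inv_identify} to identify $(-)\sinv$ and $(-)\asinv$ as $\hom_{UL(\g)}(U(\glie)^{\mathrm{s}/\mathrm{a}},-)$ and then resolving the first variable by $V_\bullet(\g)$, $W_\bullet(\g)$. Your double-complex balancing and the explicit check of $\glie$-equivariance for the tensor--hom adjunction (including the sign coming from the left action $\overline{y}\cdot x = -xy$) spell out details the paper omits, but the route is the same.
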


\begin{rem}
Interpreted in the appropriate derived category, the short exact sequence of complexes induces the distinguished triangle:
\[
\hom_\kring (\g\rreg, \mathbb{R} (-)\asinv  M) [-1]
\rightarrow 
\mathbb{R} (-)\sinv M 
\rightarrow 
M \rreg 
\rightarrow .
\]
\end{rem}

In the antisymmetric case, the associated derived functors identify with Leibniz cohomology by the main result of \cite{LP}:

\begin{prop} 
\label{prop:cohom_rder}
For $\g$ a Leibniz algebra and $M$ a $\g$-module, there are natural isomorphisms:
\[
\mathbb{R}^*(-)\asinv M
\cong 
\ext^*_{UL(\g)} ( U(\glie)\leibasym, M)
\cong 
HL^* (\g; M) .
\]
\end{prop}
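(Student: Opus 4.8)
The plan is to identify the cochain complex $\hom_{UL(\g)}(W_\bullet(\g), M)$ and check that it computes both $\ext^*_{UL(\g)}(U(\glie)\leibasym, M)$ and $HL^*(\g; M)$, with the asserted isomorphisms being the composite. First I would invoke Theorem \ref{thm:Wg}, which provides $W_\bullet(\g)$ as a \emph{free} resolution of $U(\glie)\leibasym$ in right $UL(\g)$-modules. Since $\leibmod_\g \simeq \uassmod_{UL(\g)}$ has enough projectives (Proposition \ref{prop:ULg_modules}), $\ext^*_{UL(\g)}(U(\glie)\leibasym, M)$ is by definition the cohomology of $\hom_{UL(\g)}(P_\bullet, M)$ for any projective resolution $P_\bullet$ of $U(\glie)\leibasym$; taking $P_\bullet = W_\bullet(\g)$ gives the first isomorphism immediately. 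This also shows $\mathbb{R}^*(-)\asinv M$ is computed by the same complex: by Proposition \ref{prop:antisym_adjunction} the functor $(-)\asinv$ is right adjoint to the exact functor $(-)\leibasym$, hence by Corollary \ref{cor:right_inv_identify} one has $M\asinv \cong \hom_{\leibmod_\g}(U(\glie)\leibasym, M)$ naturally, so $\mathbb{R}^*(-)\asinv M \cong \ext^*_{UL(\g)}(U(\glie)\leibasym, M)$.

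Next I would address the identification with Leibniz cohomology $HL^*(\g; M)$. This is precisely the content of the main theorem of Loday and Pirashvili \cite{LP}: the Leibniz cochain complex $CL^*(\g; M)$ with $CL^n(\g; M) = \hom_\kring(\g^{\otimes n}, M)$ and the Leibniz coboundary computes $HL^*(\g; M)$ by definition, and \cite{LP} identifies $CL^*(\g; M)$ with $\hom_{UL(\g)}(W_\bullet(\g), M)$. Concretely, applying $\hom_{UL(\g)}(-, M)$ to $W_n(\g) = \g^{\otimes n} \otimes UL(\g)$ yields $\hom_\kring(\g^{\otimes n}, M)$ by the free-module adjunction $\hom_{UL(\g)}(Z \otimes UL(\g), M) \cong \hom_\kring(Z, M)$, and the differential of Theorem \ref{thm:Wg} transposes (using the left and right $\g$-actions on $M$) to the Leibniz coboundary. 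I would cite \cite[Section 3]{LP} for the precise matching of signs and the verification that the resulting coboundary is exactly the one defining $HL^*$. Combining, the cohomology of $\hom_{UL(\g)}(W_\bullet(\g), M)$ is simultaneously $\ext^*_{UL(\g)}(U(\glie)\leibasym, M)$, $\mathbb{R}^*(-)\asinv M$, and $HL^*(\g; M)$, all naturally in $M$.

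The main obstacle, such as it is, is purely bookkeeping: one must check that the right $\glie$-action on $\hom_{UL(\g)}(W_\bullet(\g), M)$ coming from the left $\glie$-action on $W_\bullet(\g)$ (Theorem \ref{thm:glie_action}) agrees with the standard $\glie$-module structure on $HL^*(\g; M)$ under the above identification —- but since we only need the isomorphism of graded $\kring$-vector spaces (and naturality in $M$ as a $\g$-module) for the statement as phrased, this compatibility is not strictly required here and can be deferred to where it is actually used. I expect no conceptual difficulty: everything reduces to Theorem \ref{thm:Wg} together with the definition of $\ext$ via free resolutions and the \cite{LP} identification of the Leibniz complex. The proof is therefore short, essentially a concatenation of the quoted results.
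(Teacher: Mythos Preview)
Your proposal is correct and matches the paper's approach: the paper does not give an explicit proof but simply attributes the result to the main theorem of \cite{LP}, and your argument unpacks precisely that citation using the free resolution $W_\bullet(\g)$ of Theorem~\ref{thm:Wg} together with Corollary~\ref{cor:right_inv_identify}. Your remark about the $\glie$-action being inessential for the statement as phrased is also accurate.
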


\subsection{Duality for total derived functors}

The duality isomorphisms of Corollary \ref{cor:nat_duality_iso} extend to explicit duality isomorphisms for the total derived functors at the level of the representing complexes:

\begin{prop}
\label{prop:duality_complexes}
For $\g$ a Leibniz algebra and $M$ a $\g$-module, there are natural isomorphisms of cochain complexes in right $\glie$-modules:
\begin{eqnarray*}
(M \otimes_{UL(\g)} W_\bullet (\g) \kurd) ^\sharp 
& 
\stackrel{\cong}{\rightarrow}
&
\hom_{UL(\g)} (W_\bullet (\g), M^\sharp) 
\\
(M \otimes_{UL(\g)} V_\bullet (\g) \kurd) ^\sharp 
& 
\stackrel{\cong}{\rightarrow}
&
\hom_{UL(\g)} (V_\bullet (\g), M^\sharp). 
\end{eqnarray*}
\end{prop}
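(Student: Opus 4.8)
The plan is to exhibit the claimed isomorphisms by combining two ingredients: first, the Kurdiani involution $\chi_L$, which identifies restriction along $\chi_L$ with $\kring$-linear duality at the level of free modules (Lemma \ref{lem:left_right_free}, recalled together with the compatibility of Proposition \ref{prop:compat_chiL}); and second, the standard adjunction identity $(M \otimes_{UL(\g)} P)^\sharp \cong \hom_{UL(\g)}(P, M^\sharp)$ valid for any right $UL(\g)$-module $M$ and any left $UL(\g)$-module $P$ that is finitely generated free in each degree, or more generally when one is careful about which side carries the dual. Since $W_\bullet(\g)\kurd$ and $V_\bullet(\g)\kurd$ are free left $UL(\g)$-modules, the tensor product $M \otimes_{UL(\g)} W_\bullet(\g)\kurd$ computes $\mathbb{L}\lbs M$ (Corollary \ref{cor:total_left}), and dualizing should land us in $\hom_{UL(\g)}(W_\bullet(\g), M^\sharp)$, which by Corollary \ref{cor:total_right} represents $\mathbb{R}(-)\asinv(M^\sharp)$.

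First I would set up the degreewise isomorphism. In degree $n$, $(M \otimes_{UL(\g)} W_\bullet(\g)\kurd)_n = M \otimes \g^{\otimes n}$ by Remark \ref{rem:identify_lder_complexes}, while $\hom_{UL(\g)}(W_n(\g), M^\sharp) = \hom_{UL(\g)}(\g^{\otimes n} \otimes UL(\g), M^\sharp) \cong \hom_\kring(\g^{\otimes n}, M^\sharp) \cong (M \otimes \g^{\otimes n})^\sharp$. Composing these with the vector-space duality functor gives the degreewise identification; one must check it is compatible with the right $\glie$-module structures, which on the left side come from the action on $W_\bullet(\g)\kurd$ of Lemma \ref{lem:right_action_X} and on the right side from the left $\glie$-action on $W_\bullet(\g)$ of Lemma \ref{lem:left_action_X}. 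The key point here is precisely the sign discrepancy recorded in the remarks following those lemmas (the left action on $\g$ is $\overline{y} \otimes x \mapsto -xy$, arising from $\chi$), and the compatibility of $\chi_L$ with $\chi$ via $s_0$ from Proposition \ref{prop:compat_chiL} is exactly what makes the two $\glie$-actions correspond under duality.

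Next I would verify that the degreewise isomorphisms assemble into an isomorphism of (co)chain complexes, i.e. that they intertwine the differentials. The differential on $M \otimes_{UL(\g)} W_\bullet(\g)\kurd$ is written out explicitly in Remark \ref{rem:identify_lder_complexes}, with its three families of terms (the $x_1 m$ term, the $m x_i$ terms, the Leibniz-product terms); the differential on $\hom_{UL(\g)}(W_\bullet(\g), M^\sharp)$ is induced by the differential of $W_\bullet(\g)$ from Theorem \ref{thm:Wg}. Dualizing the former term by term and matching signs against the latter is a bookkeeping exercise; the transpose of "multiply the $UL(\g)$-factor by $l_{x_1}$" becomes "act by $l_{x_1}$ on $M^\sharp$", and similarly for the $r_{x_i}$ terms, using that the left $UL(\g)$-action on $M^\sharp$ is restriction along $\chi_L$ of the right action on $M$ — this is where $\chi_L(l_x) = r_x + l_x$, $\chi_L(r_x) = -r_x$ enter. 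For the $V_\bullet$ statement, the argument is the same on the part of the complex that agrees with $W_\bullet(\g)\kurd \otimes \g\rreg$ (up to shift), and one checks separately the single differential $M \otimes \g \to M$, $m \otimes x \mapsto xm$, dualizes to the map $M^\sharp \to \hom_\kring(\g, M^\sharp)$ appearing in $\hom_{UL(\g)}(V_1(\g), M^\sharp) \to \hom_{UL(\g)}(V_0(\g), M^\sharp)$; equivalently, one invokes the short exact sequences (\ref{eqn:V_kurd}) and (\ref{eqn:V}), notes that duality is exact, and uses naturality to reduce the $V$-case to the $W$-case plus the degree-zero piece.

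The main obstacle will be the sign and variance bookkeeping: getting every transpose of a left/right $UL(\g)$-module map correct under $\chi_L$, and confirming that the Koszul-type signs in the Leibniz differential are preserved rather than negated when one dualizes a complex (recall the homological conventions fixed in the introduction, under which $[-1]$ is cohomological suspension). I would handle this by first treating the free module $UL(\g)$ itself — establishing $(UL(\g))^\sharp \cong UL(\g)$ as $UL(\g)$-bimodules-with-$\glie$-action in the relevant sense via $\chi_L$ and $\chi$, which is essentially Corollary \ref{cor:left_right_Uglie} combined with Lemma \ref{lem:left_right_free} — and then propagating degreewise, since each $W_n(\g)$ and $V_n(\g)$ is built from $UL(\g)$ by tensoring with the finite-dimensional vector space $\g^{\otimes n}$ (here finite-dimensionality of $\g$ is not needed because one only ever dualizes the vector-space factor $\g^{\otimes n}$, never $UL(\g)$, thanks to the $\hom$–$\otimes$ adjunction over $UL(\g)$). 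Once the bimodule-level statement is in place, naturality in $M$ and compatibility with the differentials follow formally.
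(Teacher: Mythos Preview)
Your approach matches the paper's, which says only that the degreewise isomorphism is ``clear by linear algebra'' and that the differential check ``extends the argument used in the proof of Corollary \ref{cor:nat_duality_iso} and is left to the reader''; you have correctly identified both the relevant linear algebra (the tensor--hom adjunction giving $(M \otimes \g^{\otimes n})^\sharp \cong \hom_\kring(\g^{\otimes n}, M^\sharp)$, which needs no finiteness hypothesis) and the term-by-term bookkeeping for the differential, including the $\glie$-equivariance via Proposition \ref{prop:compat_chiL}. One caution: your aside that $(UL(\g))^\sharp \cong UL(\g)$ as bimodules is false in general (it would require $UL(\g)$ to be finite-dimensional), but as you yourself observe, this statement is never actually used---the $\hom$--$\otimes$ adjunction over $UL(\g)$ does all the work---so simply drop that remark from the write-up.
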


\begin{proof}
This is clear at the level of the underlying graded objects, by linear algebra. The verification that the differentials correspond extends the argument used in the proof of Corollary \ref{cor:nat_duality_iso} and is left to the reader.
\end{proof}

\begin{rem}
The duality isomorphisms of Proposition  \ref{prop:duality_complexes} can be viewed at the level of total derived functors as:
\begin{eqnarray*}
(\mathbb{L} \lbs M)^\sharp 
&\cong & \mathbb{R} (-)\asinv (M^\sharp) \\
(\mathbb{L} \lbas M)^\sharp 
&\cong & \mathbb{R} (-)\sinv (M^\sharp).
\end{eqnarray*}
On passage to homology, the first isomorphism gives the duality isomorphism relating Leibniz homology and Leibniz cohomology:
\[
HL_*(\g; M\kurd) ^\sharp 
\cong 
HL^* (\g; M^\sharp),
\]
by the identifications of Propositions \ref{prop:hom_lder} and \ref{prop:cohom_rder}.
\end{rem}

\subsection{Splitting results}
Working with the explicit resolutions gives direct access to splitting results when coefficients are taken to be (anti)symmetric, as appropriate.

\begin{prop}
\label{prop:lder_sym_asym}
For $\g$ a Leibniz algebra and $N$ a right $\glie$-module,  there are natural isomorphisms of complexes of right $\glie$-modules:
\begin{eqnarray*}
 N\leibsym \otimes_{UL(\g)} W_\bullet (\g)\kurd
 &\cong &
 N
\  \oplus \ 
 \big(
 (N \otimes \g\rreg)\leibasym \otimes_{UL(\g)} W_\bullet (\g)\kurd
 \big)[1]
 \\
  N\leibasym \otimes_{UL(\g)} V_\bullet (\g)\kurd
 & \cong& 
  N 
\   \oplus \ 
(N\leibasym \otimes _{UL(\g)} W_\bullet (\g)\kurd )\otimes \g\rreg [1].  
\end{eqnarray*}
\end{prop}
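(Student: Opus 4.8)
The plan is to prove both isomorphisms directly at the level of chain complexes of right $\glie$-modules, using the explicit descriptions of the representing complexes in Remark~\ref{rem:identify_lder_complexes} together with the two defining features of the coefficients: in $N\leibsym$ the left $\g$-action satisfies $xn=-nx$, while in $N\leibasym$ it is trivial, $xn=0$.

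For the first isomorphism, set $C_\bullet := N\leibsym \otimes_{UL(\g)} W_\bullet(\g)\kurd$, so that $C_0 = N$ and $C_n = N \otimes \g^{\otimes n}$ for $n \geq 1$ by Remark~\ref{rem:identify_lder_complexes}(1). The differential $C_1 \to C_0$ is $n \otimes x \mapsto xn + nx$, which vanishes because $N\leibsym$ is symmetric; hence $C_\bullet$ splits canonically, as chain complexes, into its degree-zero part $N$ (with zero differential) and the subcomplex $C_{\geq 1}$ of terms in positive degrees. It then remains to identify $C_{\geq 1}$ with $\bigl((N \otimes \g\rreg)\leibasym \otimes_{UL(\g)} W_\bullet(\g)\kurd\bigr)[1]$, which I would do via the isomorphism of graded vector spaces $C_n = N \otimes \g^{\otimes n} \cong (N \otimes \g)\otimes \g^{\otimes (n-1)}$ pulling out the first tensor factor, $n \otimes \langle x_1, \ldots, x_n \rangle \mapsto (n \otimes x_1)\otimes \langle x_2, \ldots, x_n \rangle$. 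For the differentials: in the formula for $d$ on $C_\bullet$ the term $x_1 n \otimes \langle x_2, \ldots\rangle$ cancels the $i=1$ right-multiplication term $n x_1 \otimes \langle x_2, \ldots\rangle$; for $i \geq 2$ the two terms $(n x_i)\otimes x_1$ and $n \otimes (x_1 x_i)$ combine — since $n x_i = n\overline{x_i}$ and $x_1 x_i = x_1\cdot \overline{x_i}$ in $\g\rreg$, and elements of $\glie$ are primitive in $U(\glie)$ — into $(n\otimes x_1)\cdot\overline{x_i}$, the diagonal $\glie$-action of $\overline{x_i}$ on $N\otimes\g\rreg$, which is exactly the corresponding term in the $W$-differential for the coefficient module $(N\otimes\g\rreg)\leibasym$; finally the Leibniz-bracket terms $x_ix_j$ with $i,j\geq 2$ match directly, and the single overall change of sign is precisely the one introduced by the homological suspension $[1]$. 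For the $\glie$-actions: from Lemma~\ref{lem:right_action_X} and the identification used in Corollary~\ref{cor:total_left}, the $\glie$-action on $M \otimes_{UL(\g)} W_\bullet(\g)\kurd$ is the diagonal action on $M\rreg \otimes (\g\rreg)^{\otimes\bullet}$, and since $(N\leibsym)\rreg \cong N$ and $\bigl((N\otimes\g\rreg)\leibasym\bigr)\rreg \cong N\otimes\g\rreg$, both sides carry the full diagonal $\glie$-action on $N\otimes(\g\rreg)^{\otimes\bullet}$, so the isomorphism above is $\glie$-equivariant.

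For the second isomorphism I would start from the short exact sequence of complexes supplied by the final statement of Corollary~\ref{cor:total_left} with $M = N\leibasym$, namely
\[
0 \to (N\leibasym)\rreg \to N\leibasym \otimes_{UL(\g)} V_\bullet(\g)\kurd \to N\leibasym \otimes_{UL(\g)} \bigl(W_\bullet(\g)\kurd \otimes \g\rreg\bigr)[1] \to 0 .
\]
Here $(N\leibasym)\rreg \cong N$, and, since $UL(\g)$ acts only on the $W_\bullet(\g)\kurd$-factor of $W_\bullet(\g)\kurd \otimes \g\rreg$ and the diagonal $\glie$-actions match up, there is a natural isomorphism of complexes $N\leibasym \otimes_{UL(\g)} \bigl(W_\bullet(\g)\kurd \otimes \g\rreg\bigr) \cong \bigl(N\leibasym \otimes_{UL(\g)} W_\bullet(\g)\kurd\bigr)\otimes \g\rreg$; so it suffices to split this sequence in complexes of right $\glie$-modules. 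In homological degree zero the middle term is $N\leibasym \otimes_{UL(\g)} UL(\g) \cong N$ and the left-hand inclusion is an isomorphism there, while the right-hand term vanishes; hence a retraction of the inclusion is obtained by projecting the middle complex onto its degree-zero part followed by this isomorphism. That this projection is a chain map amounts to the vanishing of the differential from degree one to degree zero of $N\leibasym \otimes_{UL(\g)} V_\bullet(\g)\kurd$, which by Remark~\ref{rem:identify_lder_complexes}(2) is $n \otimes x \mapsto xn$, hence zero because $N\leibasym$ is antisymmetric; $\glie$-equivariance of the retraction is immediate from the identification of the degree-zero parts as $\glie$-modules.

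The only part of this argument that is not purely formal is the verification in the first isomorphism that, after the cancellation of the $l_{x_1}$-term, the residual differential on $N\leibsym \otimes_{UL(\g)} W_\bullet(\g)\kurd$ agrees — up to the suspension sign — with the differential of the $W$-complex for $(N\otimes\g\rreg)\leibasym$; I expect this sign and index bookkeeping to be the main (though routine) obstacle. Its substantive content is entirely captured by the primitivity of $\glie$ in $U(\glie)$, which converts the pair $(n x_i)\otimes x_1$, $n \otimes (x_1 x_i)$ into the diagonal action, together with the Leibniz relation, which is already built into the differential of $W_\bullet(\g)$ and so needs no separate argument.
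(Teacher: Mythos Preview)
Your proposal is correct and follows essentially the same route as the paper: for the first isomorphism you use the vanishing of $d:C_1\to C_0$ together with the cancellation of the $x_1m$ and $mx_1$ terms to identify the positive-degree part (up to the suspension sign) with the $W$-complex for $(N\otimes\g\rreg)\leibasym$, and for the second you split the short exact sequence of Corollary~\ref{cor:total_left} using the vanishing of the degree-one differential for antisymmetric coefficients. Your write-up is more explicit than the paper's ``by inspection'' about why the $i\geq 2$ terms regroup into the diagonal $\glie$-action on $N\otimes\g\rreg$, and you address $\glie$-equivariance directly, but the underlying argument is the same.
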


\begin{proof}
Consider the complex $ N\leibsym \otimes_{UL(\g)} W_\bullet (\g)\kurd$.
The differential between homological degree $1$ and $0$ is zero, since $N\leibsym$ is symmetric, which gives the splitting of complexes. It remains to identify the complex of terms of positive degree. 

The respective terms of the differential made explicit in  Remark \ref{rem:identify_lder_complexes}  with $x_1 m$ and $mx_1$ in the first factor sum to zero. 
Hence, by inspection, one has 
\[
 d (m \otimes \langle x_1 , \ldots ,x_n \rangle)
= 
- d ((m \otimes x_1) \otimes \langle x_2 , \ldots ,x_n \rangle)
\]
where the left hand side is calculated in $ N\leibsym \otimes_{UL(\g)} W_\bullet (\g)\kurd$ and the right hand side in  $(N  \otimes \g\rreg)\leibasym \otimes_{UL(\g)} W_\bullet (\g)\kurd$. This gives the required isomorphism of complexes. 

The antisymmetric case is more direct, since the short exact sequence of complexes given in Corollary \ref{cor:total_left} splits in this case.
\end{proof}

\begin{rem}
\label{rem:split_lder}
Working in the appropriate derived category of right $\glie$-modules, these splittings yield the natural  isomorphisms of total derived functors:
\begin{eqnarray*}
\mathbb{L} \lbs (N^s) &\cong & N \ \oplus \ \mathbb{L} \lbs ((N \otimes \g\rreg)\leibasym) [1] \\
\mathbb{L} \lbas (N^a) & \cong & N\  \oplus \ \mathbb{L} \lbs (N^a) \otimes \g\rreg [1].
\end{eqnarray*}
\end{rem}

\begin{exam}
\label{exam:HL_hom_splitting}
On passage to homology, for $N \in \ob \liemod_\glie$, one obtains the isomorphism
\[
HL_* (\g; (N\leibsym)\kurd) 
\cong 
N 
\oplus 
HL_{*-1} (\g; ((N \otimes \g\rreg)\leibasym)\kurd ).
\]
\end{exam}

The following is the counterpart of Proposition \ref{prop:lder_sym_asym} for right derived functors:

\begin{prop}
\label{prop:rder_asinv_sinv}
For $\g$ a Leibniz algebra and $N$ a right $\glie$-module, there are natural isomorphisms of cochain complexes of right $\glie$-modules:
\begin{eqnarray*}
\hom_{UL(\g)} (W_\bullet (\g), N\leibasym ) 
&\cong 
&
N  \ \oplus \ \hom _{UL(\g)} (W_\bullet (\g), \hom_\kring (\g \rreg , N)\leibsym) [-1] 
\\
\hom_{UL(\g)} (V_\bullet (\g), N\leibsym ) 
&\cong 
&
N\  \oplus \ 
\hom_\kring (\g \rreg , \hom _{UL(\g)} (W_\bullet (\g), N\leibsym)) [-1].
\end{eqnarray*}
\end{prop}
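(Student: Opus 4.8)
The plan is to mirror the argument of Proposition~\ref{prop:lder_sym_asym} after applying the $\kring$-linear duality functor $(-)^\sharp$, exploiting the compatibility isomorphisms of Proposition~\ref{prop:duality_complexes} together with the swap between symmetric and antisymmetric modules recorded in Proposition~\ref{prop:duality_compatibility}. Concretely, for $N$ a right $\glie$-module of finite dimension the cochain complex $\hom_{UL(\g)}(W_\bullet(\g), N\leibasym)$ is isomorphic to $(N^\sharp\leibsym \otimes_{UL(\g)} W_\bullet(\g)\kurd)^\sharp$: indeed $N\leibasym \cong ((N^\sharp)\leibsym)^\sharp$ by Proposition~\ref{prop:duality_compatibility}, and then Proposition~\ref{prop:duality_complexes} identifies the two complexes, all isomorphisms being natural in $N$ and compatible with the right $\glie$-actions. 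Dualizing the first splitting of Proposition~\ref{prop:lder_sym_asym} applied to $N^\sharp$ and translating back via these identifications yields the first asserted isomorphism; the finite-dimensionality restriction is then removed by a direct check at the level of the explicit complexes (see below), so that the hypotheses match those of the statement.

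Rather than rely on the finite-dimensional reduction, I would in fact argue directly, just as in the proof of Proposition~\ref{prop:lder_sym_asym}. For the first isomorphism: in $\hom_{UL(\g)}(W_\bullet(\g), N\leibasym)$ the coboundary from cochain degree $0$ to degree $1$ vanishes because $N\leibasym$ has trivial left $\g$-action, which is exactly the condition making the relevant term of the dual of the differential in Remark~\ref{rem:identify_lder_complexes} disappear. This splits off the copy of $N = \hom_{UL(\g)}(W_0(\g), N\leibasym)^{\ker}$ (matching $H^0$). For the remaining truncated complex, the two surviving terms of the differential — right multiplication by $r_{x_i}$ and the Leibniz-bracket terms — are precisely those of $\hom_{UL(\g)}(W_\bullet(\g), \hom_\kring(\g\rreg,N)\leibsym)$ after re-indexing $W_n(\g) = \g^{\otimes n}\otimes UL(\g)$ as $\g \otimes (\g^{\otimes n-1}\otimes UL(\g))$ and using the adjunction $\hom_{UL(\g)}(\g\otimes -, N\leibasym) \cong \hom_{UL(\g)}(-, \hom_\kring(\g\rreg, N)\leibasym)$, noting that $\hom_\kring(\g\rreg,N)\leibasym$ is forced to be $\hom_\kring(\g\rreg,N)\leibsym$ because the remaining differential only sees the right $\g$-action; the shift $[-1]$ accounts for the re-indexing. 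Tracking the $\glie$-actions through this identification is routine given Theorem~\ref{thm:glie_action}.

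For the second isomorphism I would use the short exact sequence of complexes~(\ref{eqn:V}) of Theorem~\ref{thm:glie_action}, apply $\hom_{UL(\g)}(-, N\leibsym)$ to get
\[
0 \to M\rreg\text{-part} \to \hom_{UL(\g)}(V_\bullet(\g), N\leibsym) \to \hom_\kring(\g\rreg, \hom_{UL(\g)}(W_\bullet(\g), N\leibsym))[-1] \to 0,
\]
more precisely the sequence of Corollary~\ref{cor:total_right} with $M = N\leibsym$, and then observe that it splits: the connecting map is governed by the differential $V_1(\g)\to V_0(\g)$ induced by $x\mapsto r_x+l_x$, whose image lands in $\ker d_1$, and pairing against the symmetric module $N\leibsym$ kills it since on $N\leibsym$ one has $r_x = -l_x$, so $r_x + l_x$ acts as zero. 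This gives the direct-sum decomposition with the $N$ summand identified as $N\leibsym\rreg \cong N$, and compatibility with $\glie$-actions again follows from Theorem~\ref{thm:glie_action}.

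The main obstacle I anticipate is bookkeeping rather than conceptual: one must carefully track the right $\glie$-module structures through the adjunction isomorphism $\hom_{UL(\g)}(\g\rreg \otimes -, N) \cong \hom_{UL(\g)}(-, \hom_\kring(\g\rreg, N))$ and the degree shift, checking that the diagonal $\glie$-action on $\g\rreg \otimes W_\bullet(\g)$ from Theorem~\ref{thm:glie_action} corresponds under the hom-tensor adjunction to the $\glie$-action on $\hom_\kring(\g\rreg, -)$ described in Proposition~\ref{prop:closed_tensor_lie}, with the signs from the conjugation $\chi$ coming out consistently; and in the second case, confirming that the splitting of the short exact sequence of Corollary~\ref{cor:total_right} is $\glie$-equivariant and not merely $\kring$-linear. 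These are the analogues of the verifications already deferred to the reader in Propositions~\ref{prop:lder_sym_asym} and~\ref{prop:duality_complexes}, so I would treat them at the same level of detail.
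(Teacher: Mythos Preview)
Your direct arguments are correct and match the paper's approach: for the first isomorphism the paper simply cites \cite[Lemma 1.4(b)]{FW}, whose proof is exactly the computation you outline (the $l_{x_1}$-term vanishes on antisymmetric coefficients, and the residual differential is that of the symmetric-coefficient complex after the shift and the $\hom$-tensor identification); for the second the paper says only that the short exact sequence of Corollary~\ref{cor:total_right} splits, and your observation that $r_x+l_x$ acts trivially on $N\leibsym$ is precisely the reason. One expository quibble: your phrase ``$\hom_\kring(\g\rreg,N)\leibasym$ is forced to be $\hom_\kring(\g\rreg,N)\leibsym$'' is misleading---these are different $\g$-modules; what you mean is that the residual differential you obtain coincides with the one for the \emph{symmetric} coefficient module, which is the actual content of the identification.
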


\begin{proof}
The first statement is established in the proof of \cite[Lemma 1.4(b)]{FW}. The second is an immediate consequence of the splitting of the short exact sequence given in Corollary \ref{cor:total_right}. 
\end{proof}

\begin{rem}
\label{rem:split_rder}
Working in the appropriate derived category, the isomorphisms of cochain complexes given in Proposition \ref{prop:rder_asinv_sinv} induce the natural isomorphisms:
\begin{eqnarray*}
\mathbb{R} (-)\asinv (N\leibasym) &\cong & N \ \oplus \  \mathbb{R} (-)\asinv (\hom_\kring (\g\rreg, N)\leibsym ) [-1] \\
\mathbb{R} (-)\sinv (N\leibsym) &\cong & N \ \oplus \  \hom_\kring (\g\rreg, \mathbb{R} (-)\asinv  (N\leibsym) ) [-1].
\end{eqnarray*}
\end{rem}

\begin{exam}
\label{exam:splitting_HL_cohom_antisymmetric}
On passing to cohomology, for $N \in \ob \liemod_\glie$ one recovers the isomorphism
\[
HL^* (\g; N\leibasym) 
\cong 
\left\{
\begin{array}{ll}
N& *=0 \\
 HL^{*-1} (\g; \hom_\kring (\g\rreg, N) \leibsym)
 & *>0
 \end{array}
 \right.
\]
of \cite[Lemma 1.4(b)]{FW}.
\end{exam}

\section{Extensions à la Loday-Pirashvili}
\label{sect:extensions_LP}

 The key to the (co)homological splitting isomorphisms is provided by  universal extensions inspired from Loday and Pirashvili's extension  \cite[equation (1.1)]{MR1383474}.

\begin{prop}
\label{prop:LP_ses}
For $\g$ a Leibniz algebra, there are exact functors 
\[
\lext, \rext : \liemod_\glie \rightrightarrows \leibmod_\g 
\]
such that, for $N \in \ob \liemod_\glie$:
\begin{enumerate}
\item 
$\rext (N)\rreg \cong N \oplus \hom_\kring (\g\rreg, N) $, where $\hom_\kring (\g\rreg, N)$ is equipped with the diagonal structure, and there is a natural short exact sequence 
\[
0
\rightarrow 
N\leibasym
\rightarrow 
\rext (N)
\rightarrow 
\hom_\kring (\g\rreg, N) \leibsym
\rightarrow 
0,
\]  
where the left action  $\g \otimes \rext (N) \rightarrow \rext (N)$ is determined by the component $\g \otimes \hom_\kring (\g , N) \rightarrow N$, which is given by the evaluation map; 
\item 
$\lext (N)\rreg \cong (N \otimes \g\rreg) \oplus N $, where $ (N \otimes \g\rreg)$ is equipped with the diagonal structure, and there is a natural short exact sequence 
\[
0
\rightarrow 
(N \otimes \g\rreg)\leibasym
\rightarrow 
\lext (N)
\rightarrow 
N\leibsym
\rightarrow 
0,
\]  
where the left action  $\g \otimes \lext (N)\rightarrow \lext (N)$ is determined by the component $\g \otimes N
\rightarrow (N \otimes \g)$, which is given by the transposition of tensor factors. 
\end{enumerate}
Moreover, there is a natural duality isomorphism 
$ 
\lext(N)^\sharp 
\cong 
\rext (N^\sharp)$.
\end{prop}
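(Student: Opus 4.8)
The plan is to apply the exact contravariant duality functor $(-)^\sharp$ of Proposition~\ref{prop:duality_compatibility} to the short exact sequence defining $\lext(N)$ and to recognise the outcome as the short exact sequence defining $\rext(N^\sharp)$. Applying $(-)^\sharp : (\leibmod_\g)\op \rightarrow \leibmod_\g$ (which is exact) to
\[
0 \rightarrow (N\otimes\g\rreg)\leibasym \rightarrow \lext(N) \rightarrow N\leibsym \rightarrow 0
\]
yields a short exact sequence
\[
0 \rightarrow (N\leibsym)^\sharp \rightarrow \lext(N)^\sharp \rightarrow \big((N\otimes\g\rreg)\leibasym\big)^\sharp \rightarrow 0
\]
of $\g$-modules. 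The compatibility isomorphisms of Proposition~\ref{prop:duality_compatibility} give $(N\leibsym)^\sharp\cong(N^\sharp)\leibasym$ and $\big((N\otimes\g\rreg)\leibasym\big)^\sharp\cong\big((N\otimes\g\rreg)^\sharp\big)\leibsym$, and the last term is handled by the standard $\kring$-linear isomorphism $(N\otimes\g\rreg)^\sharp\cong\hom_\kring(\g\rreg, N^\sharp)$; one checks it intertwines the diagonal $\glie$-structures directly from the formula $fh(n)=f(n)h-f(nh)$ recalled in Subsection~\ref{subsect:closed_tensor_lie} (it is just tensor--hom adjunction for $\glie$-modules). Thus the dualised sequence becomes
\[
0 \rightarrow (N^\sharp)\leibasym \rightarrow \lext(N)^\sharp \rightarrow \hom_\kring(\g\rreg, N^\sharp)\leibsym \rightarrow 0 ,
\]
which has precisely the outer terms of the sequence defining $\rext(N^\sharp)$.

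It then remains to verify that this is the \emph{same} extension as $\rext(N^\sharp)$. By the characterisations in the statement, an extension of $\hom_\kring(\g\rreg,N^\sharp)\leibsym$ by $(N^\sharp)\leibasym$ with $\glie$-splitting $\lext(N)^\sharp\rreg \cong \hom_\kring(\g\rreg,N^\sharp)\oplus N^\sharp$ is determined once one knows the $\glie$-linear map $\g\otimes\hom_\kring(\g\rreg,N^\sharp)\rightarrow N^\sharp$ recording the left action, so it suffices to show this map is the evaluation pairing. Here I would use that $(-)^\sharp$ is $\kring$-linear duality followed by restriction along the Kurdiani involution $\chi_L$ (with $\chi_L(l_x)=r_x+l_x$ and $\chi_L(r_x)=-r_x$, Proposition~\ref{prop:involution}): consequently the left $\g$-action on $\lext(N)^\sharp$ is computed from the combined left and right $\g$-actions on $\lext(N)$ via $\chi_L(l_x)=r_x+l_x$. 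Since $(N\otimes\g\rreg)\leibasym$ is antisymmetric, the left action on $\lext(N)$ contributes only through its transposition component $\g\otimes N\rightarrow N\otimes\g$, while the right action is just the diagonal $\glie$-action; tracing this through, the new component of the left action on $\lext(N)^\sharp$ is exactly the evaluation map $\g\otimes\hom_\kring(\g\rreg,N^\sharp)\rightarrow N^\sharp$. Hence the dualised sequence coincides with the one defining $\rext(N^\sharp)$, which gives the isomorphism $\lext(N)^\sharp\cong\rext(N^\sharp)$; it is natural in $N$ because every construction above is.

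The step I expect to be the main obstacle is precisely this last verification: transporting the left and right $\g$-module structures through the composite of $\kring$-linear duality and the Kurdiani involution $\chi_L$ without sign errors, and confirming that the transposition datum characterising $\lext$ dualises to the evaluation datum characterising $\rext$. Everything else — exactness of $(-)^\sharp$, the compatibility isomorphisms with $(-)\leibsym$ and $(-)\leibasym$, and the $\glie$-equivariance of $(N\otimes\g\rreg)^\sharp\cong\hom_\kring(\g\rreg,N^\sharp)$ — is either quoted directly from Section~\ref{sect:duality} or entirely routine. Conceptually, the result reflects the fact that $\chi_L$ exchanges $d_0$ and $d_1$, as in Proposition~\ref{prop:compat_chiL}, hence exchanges the two Loday--Pirashvili extensions up to duality.
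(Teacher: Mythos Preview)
Your proposal addresses only the duality isomorphism $\lext(N)^\sharp \cong \rext(N^\sharp)$ --- the ``Moreover'' clause --- while taking the existence of the functors $\lext$ and $\rext$ as $\g$-modules for granted. But the principal content of the Proposition is precisely that the data described in parts (1) and (2) do define $\g$-modules: one must check that the prescribed left and right actions satisfy the axioms $(MLL)$, $(LML)$, $(LLM)$ of Definition~\ref{defn:module_leibniz}. The paper's proof is largely devoted to this. For $\rext(N)$ it cites the Loday--Pirashvili extension \cite[(1.1)]{MR1383474}; for $\lext(N)$ it writes the actions on $(N\otimes\g\rreg)\oplus N$ explicitly,
\[
[m\otimes x,n]\,y = [my\otimes x + m\otimes xy,\ ny], \qquad z\,[m\otimes x,n] = [n\otimes z,\ -nz],
\]
and then verifies $(LML)$ and $(LLM)$ line by line (with $(MLL)$ immediate from the right action). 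Your proposal presupposes this step when you invoke ``the short exact sequence defining $\lext(N)$''. Exactness of the functors also goes unmentioned, though that part is genuinely routine once the module structures are in place.

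For the duality claim itself, your argument is sound and in fact more articulate than the paper's, which says only that it is ``established by direct verification''. Dualising the $\lext$-sequence, applying the compatibilities $(N\leibsym)^\sharp\cong(N^\sharp)\leibasym$ and $(N\otimes\g\rreg)^\sharp\cong\hom_\kring(\g\rreg,N^\sharp)$, and then matching the characterising left-action component (transposition dualises to evaluation, once one tracks the Kurdiani twist $\chi_L(l_x)=r_x+l_x$) is exactly how one carries out that verification. So this portion is fine; the gap is the omitted module-axiom check for $\lext(N)$.
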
 

\begin{proof}
The first statement corresponds to the extension given in \cite[Equation (1.1)]{MR1383474}. (The reader will check that this holds for $\g$ an arbitrary Leibniz algebra, although  it is only applied to the case $\g = \glie$ in \cite{MR1383474}; cf. the second case below.) It is clear that $\rext (-)$   gives an exact functor, as stated.
 
 For the second case, consider an element  of $\lext (N)$ denoted by $[m \otimes x , n]$ for $m , n \in N$ and $x \in \g$, using the splitting of right $\glie$-modules $\lext (N) \cong (N \otimes \g\rreg) \oplus N$. The left and right $\g$-actions are given explicitly for $y, z \in \g$ by:
\begin{eqnarray*}
[m \otimes x , n ] y &=& [my \otimes x + m \otimes  xy , ny ] \\
z [m \otimes x , n ] &=& [n \otimes z , -nz].
\end{eqnarray*}
To check that this defines a $\g$-module structure, one reduces to considering the action on $[0,n]$. 
 The relation $(MLL)$ is clear, since it only involves right multiplication. For $(LML)$ one uses the equalities:
 \begin{eqnarray*} 
 x ([0,n] y ) &=& x [0, ny] = [ny \otimes x , - (ny)x] \\
 (x[0,n])y &=& [n \otimes x, -nx ] y = [ny \otimes x + n \otimes xy , -(nx) y] \\
 (xy)[0,n] &=& [n \otimes xy , -n (xy)] 
 \end{eqnarray*}
to check that $ x ([0,n] y ) =  (x[0,n])y - (xy)[0,n]$, as required. 

Similarly, for $(LLM)$, one has the equalities:
\begin{eqnarray*}
x (y[0,n]) &=& x [n \otimes y , -ny] = [-ny \otimes x, (ny)x]\\
(xy) [0,n] &=& [n \otimes xy, -n (xy)]\\
(x[0,n]) y &=& [n \otimes x , -nx] y = [ny \otimes x + n \otimes xy, -(nx)y]
\end{eqnarray*}
which give $x (y[0,n]) = (xy) [0,n] - (x[0,n]) y$, as required.

It is clear that $\lext (-)$ gives an exact functor, as stated. The duality statement relating the two extensions is established by direct verification. 
\end{proof}

\begin{cor}
\label{cor:ext_classes}
For $\g$ a Leibniz algebra and $N \in \ob \liemod_\glie$, the extensions of Proposition \ref{prop:LP_ses} provide classes
\begin{eqnarray*}
{} [\lext (N)] &\in &\ext^1_{\leibmod_\g}(N\leibsym, (N \otimes \g\rreg)\leibasym)
 \\
{} [\rext(N)]& \in &\ext^1_{\leibmod_\g}(\hom_\kring (\g\rreg, N)\leibsym, N\leibasym )
 \end{eqnarray*}
that are natural in the following sense: if $f : N \rightarrow N'$ is a morphism of $\liemod_\glie$, then 
\begin{eqnarray*}
(f\otimes \g\rreg)_* [\lext(N)] &=& f^* [\lext(N')] \in  \ext^1_{\leibmod_\g}(N\leibsym, ((N' \otimes \g\rreg)\leibasym)
\\
f_* [\rext(N)] &=& (\hom_\kring (\g\rreg, f))^* [\rext(N')] \in \ext^1_{\leibmod_\g}(\hom_\kring (\g\rreg, N)\leibsym, (N')\leibasym ).
\end{eqnarray*}

Moreover, these extensions are related by:
\begin{eqnarray*}
{}[\lext (N) ] &=& \eta^*_N [\rext (N\otimes \g\rreg)]
\\
{} [\rext (N)]&=& (\epsilon_N)_* [\lext (\hom_\kring (\g\rreg, N))]
\end{eqnarray*}
where $\eta_N$ and $\epsilon_N$ are respectively the unit and counit of the adjunction 
$(-\otimes \g\rreg) \dashv \hom_\kring (\g\rreg, -)$.
\end{cor}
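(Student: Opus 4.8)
The plan is to verify the four assertions of Corollary~\ref{cor:ext_classes} directly, working with the explicit descriptions of $\lext(N)$ and $\rext(N)$ provided by Proposition~\ref{prop:LP_ses}. For the naturality statements, I would construct the relevant morphisms of extensions explicitly. Given $f : N \rightarrow N'$, the map $[m\otimes x, n] \mapsto [f(m)\otimes x, f(n)]$ should define a morphism $\lext(N) \rightarrow \lext(N')$ of $\g$-modules: one checks it is $\kring$-linear, compatible with the explicit formulas for the left and right actions (this is immediate since $f$ is $\glie$-linear and the actions are built from the module structure on $N$, the Leibniz product, and transposition of tensor factors, all of which commute with applying $f$). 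This morphism sits in a commutative ladder between the defining short exact sequences of $\lext(N)$ and $\lext(N')$, whose left-hand vertical is $f\otimes\g\rreg$ and right-hand vertical is $f\leibsym$; the equality $(f\otimes\g\rreg)_*[\lext(N)] = f^*[\lext(N')]$ in $\ext^1$ is then the standard consequence of having such a ladder. The statement for $\rext$ is entirely parallel, using the explicit extension from \cite[Equation~(1.1)]{MR1383474}, with the map induced by $f$ on the $N\leibasym$ summand and by $\hom_\kring(\g\rreg, f)$ on the $\hom_\kring(\g\rreg,N)\leibsym$ summand.

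For the two relations linking $\lext$ and $\rext$, I would exhibit explicit isomorphisms of $\g$-modules identifying the relevant pullbacks and pushouts. For $[\lext(N)] = \eta_N^*[\rext(N\otimes\g\rreg)]$, recall from Proposition~\ref{prop:closed_tensor_lie} that $\eta_N : N \rightarrow \hom_\kring(\g\rreg, N\otimes\g\rreg)$ sends $n$ to the map $x \mapsto n\otimes x$. The pullback $\eta_N^*\rext(N\otimes\g\rreg)$ is the $\g$-submodule of $\rext(N\otimes\g\rreg)$ lying over $N\subseteq \hom_\kring(\g\rreg, N\otimes\g\rreg)$ via $\eta_N$, with kernel $(N\otimes\g\rreg)\leibasym$. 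I would write down the candidate isomorphism $\lext(N) \rightarrow \eta_N^*\rext(N\otimes\g\rreg)$ sending $[m\otimes x, n]$ to the element over $\eta_N(n)$ with $(N\otimes\g\rreg)$-component $m\otimes x$, and check that the left $\g$-action formula $z[m\otimes x, n] = [n\otimes z, -nz]$ of Proposition~\ref{prop:LP_ses} matches the left action formula for $\rext$ (whose nontrivial component is the evaluation map $\g\otimes\hom_\kring(\g,N\otimes\g\rreg)\rightarrow N\otimes\g\rreg$), after transport along $\eta_N$; the evaluation of $z$ on $\eta_N(n) = (x\mapsto n\otimes x)$ is exactly $n\otimes z$, so the formulas agree, and this compatibility of the extension morphisms gives the equality in $\ext^1$. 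The dual relation $[\rext(N)] = (\epsilon_N)_*[\lext(\hom_\kring(\g\rreg,N))]$ is handled symmetrically: $\epsilon_N : \hom_\kring(\g\rreg,N)\otimes\g\rreg \rightarrow N$ is evaluation, the pushout replaces the kernel $(\hom_\kring(\g\rreg,N)\otimes\g\rreg)\leibasym$ of $\lext(\hom_\kring(\g\rreg,N))$ by $N\leibasym$ along $\epsilon_N\leibasym$, and one checks the resulting extension of $\hom_\kring(\g\rreg,N)\leibsym$ by $N\leibasym$ has left action given by evaluation, hence coincides with $\rext(N)$. Alternatively, this second relation can be deduced from the first together with the duality isomorphism $\lext(N)^\sharp\cong\rext(N^\sharp)$ of Proposition~\ref{prop:LP_ses} and the compatibility of duality with the adjunction unit/counit.

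The main obstacle I expect is purely bookkeeping: matching the two ad hoc descriptions of the left $\g$-action --- the one for $\lext$ given by transposition $\g\otimes N \rightarrow N\otimes\g$ and the one for $\rext$ given by evaluation $\g\otimes\hom_\kring(\g,N)\rightarrow N$ --- across the adjunction unit/counit, and tracking signs carefully (the $-nz$ term in the $\lext$ formula, and the sign conventions in the extension classes). There is no conceptual difficulty, since both sides are extensions with the same underlying $\glie$-module data and one need only verify that an explicit $\kring$-linear bijection respects all three module relations $(MLL)$, $(LML)$, $(LLM)$; by the argument pattern already used in the proof of Proposition~\ref{prop:LP_ses}, these reduce to the action on the canonical generators, so the verification is finite and routine. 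I would present the construction of the comparison morphism in each case and leave the relation-checking to the reader, exactly as in Proposition~\ref{prop:LP_ses}.
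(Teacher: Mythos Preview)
Your proposal is correct and takes essentially the same approach as the paper: the paper's proof simply states that the first part follows directly from the description of the functors $\lext(-)$ and $\rext(-)$ in Proposition~\ref{prop:LP_ses}, and that the compatibility relations are verified directly using the identification of $\eta_N$ and $\epsilon_N$ in Proposition~\ref{prop:closed_tensor_lie}. You have merely spelled out in greater detail what that direct verification entails (the explicit ladder morphism for naturality, and the explicit comparison of the left $\g$-actions across $\eta_N$ and $\epsilon_N$), which is entirely in line with the paper's intent.
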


\begin{proof}
The first part of the Corollary follows directly from the description of the functors $\lext(-)$ and $\rext(-)$ in Proposition \ref{prop:LP_ses}; in particular, this yields the naturality statement.

The compatibility relations are verified directly, using the identification of $\eta_N$ and $\epsilon_N$ given in Proposition \ref{prop:closed_tensor_lie}.
\end{proof}

\begin{lem}
\label{lem:lext_Uglie}
For $\g$ a Leibniz algebra, there is a natural isomorphism of $\g$-modules:
\[
UL(\g)
\cong 
\lext (U(\glie)).
\]
\end{lem}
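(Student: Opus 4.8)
The plan is to identify $UL(\g)$ with $\lext(U(\glie))$ by exhibiting a $\g$-module isomorphism that respects the splittings as right $\glie$-modules already established. First I would recall from Proposition \ref{prop:ker_d1} that the short exact sequence of $\g$-modules
\[
0 \to (\g\rreg \otimes U(\glie))\leibasym \to UL(\g) \xrightarrow{d_1} U(\glie)\leibsym \to 0
\]
splits as right $\glie$-modules via $s_0$, giving $UL(\g)\rreg \cong (\g\rreg \otimes U(\glie)) \oplus U(\glie)$; comparing this with the description $\lext(U(\glie))\rreg \cong (U(\glie) \otimes \g\rreg) \oplus U(\glie)$ from Proposition \ref{prop:LP_ses}(2) suggests the correspondence $s_0(\Theta)(r_y + l_y) \leftrightarrow [\Theta \otimes y, 0]$ and $s_0(\Theta) \leftrightarrow [0, \Theta]$ (after swapping tensor factors in $\g\rreg \otimes U(\glie)$ versus $U(\glie) \otimes \g\rreg$). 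I would define the candidate map on the splitting summands accordingly and note it is a right $\glie$-module isomorphism, since on the first summand it transports the diagonal $\glie$-action on $\g\rreg \otimes U(\glie)$ to that on $U(\glie) \otimes \g\rreg$, and on the second summand it is the identity of $U(\glie)$.

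The substance of the proof is then to check that this $\kring$-linear isomorphism is $\g$-linear, i.e.\ compatible with both the left and right $\g$-actions on $UL(\g)$ and on $\lext(U(\glie))$. For the right $\g$-action: by Proposition \ref{prop:right-restriction}(1) and the preceding discussion, the right action on either module is determined by the right $\glie$-action together with the formulas; in particular right multiplication by $r_z$ on $UL(\g)$ already matches the diagonal $\glie$-action by the computation in the proof of Proposition \ref{prop:ker_d1}, and the same holds on $\lext(U(\glie))$ by Proposition \ref{prop:LP_ses}(2), so the right actions correspond. For the left $\g$-action: since the subobject $(\g\rreg \otimes U(\glie))\leibasym \subset UL(\g)$ is antisymmetric (so $z$ acts as $0$ there), the left action is entirely captured by its effect on the summand $U(\glie) \cong s_0(U(\glie))$, and Proposition \ref{prop:ker_d1} records $s_0(\Theta) l_z = (z \otimes \Theta) - \Theta\overline{z}$. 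On the other side, Proposition \ref{prop:LP_ses}(2) gives $z[0,\Theta] = [\Theta \otimes z, -\Theta z]$. Under the correspondence $z \otimes \Theta \leftrightarrow \Theta \otimes z$ and $\Theta\overline z \leftrightarrow \Theta z$, these two formulas agree, so the left actions correspond as well.

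The main obstacle I expect is bookkeeping: getting the signs and the order of tensor factors to match between the two parametrizations (Kurdiani/Loday--Pirashvili conventions for $\g\rreg \otimes U(\glie)$ versus $U(\glie)\otimes \g\rreg$, and the sign in $z[0,n] = [n\otimes z, -nz]$ versus $s_0(\Theta)l_y = (y\otimes\Theta) - \Theta\overline y$), and verifying that the diagonal $\glie$-action on $\g\rreg\otimes U(\glie)$ really is transported correctly — this is where the identity $l_{yz} = [l_y, r_z]$ used in Proposition \ref{prop:ker_d1} re-enters. Once the linear map is pinned down, $\g$-linearity is a routine check reducing, as in Proposition \ref{prop:LP_ses}, to the generating summand $[0,\Theta]$; alternatively, one can phrase the argument more conceptually by observing that both modules are the unique (up to isomorphism) $\g$-module extension of $U(\glie)\leibsym$ by $(U(\glie)\otimes\g\rreg)\leibasym$ whose left action is given by transposition of tensor factors, i.e.\ that $[\lext(U(\glie))]$ is represented by $UL(\g)$ — which is exactly the statement that will be used in the sequel, so it is worth recording the isomorphism at the level of extensions rather than merely of modules.
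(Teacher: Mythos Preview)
Your proposal is correct and takes essentially the same approach as the paper: both identify $UL(\g)$ and $\lext(U(\glie))$ as extensions of $U(\glie)\leibsym$ by $(U(\glie)\otimes\g\rreg)\leibasym$ that split as right $\glie$-modules, and then match the left $\g$-actions via the explicit formula $s_0(\Theta)l_y = (y\otimes\Theta) - \Theta\overline{y}$ from Proposition~\ref{prop:ker_d1} against $z[0,n]=[n\otimes z,-nz]$ from Proposition~\ref{prop:LP_ses}. The paper's proof is terser (it merely points to Proposition~\ref{prop:lbs_ULg} and Corollary~\ref{cor:ker_d1} and says ``can be checked by comparison''), whereas you have actually written out the comparison; your bookkeeping with the swapped tensor factors is fine.
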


\begin{proof}
By construction, $\lext (U(\glie))$ occurs in an extension of the form
\[
0
\rightarrow 
(U(\glie) \otimes \g\rreg)\leibasym
\rightarrow 
\lext (U(\glie))
\rightarrow 
U(\glie)\leibsym
\rightarrow 
0
\]
which splits as right $\glie$-modules. That $\lext (U(\glie))$ is isomorphic to $ UL(\g)$ as a $\g$-module can be checked by comparison with Proposition \ref{prop:lbs_ULg} and the structure of $\ker d_1$ made explicit in the proof of Corollary \ref{cor:ker_d1}.
\end{proof}

 Propositions \ref{prop:lext_deriv_lbs} and \ref{prop:rext_deriv_asinv} below exhibit the key homological properties of $\lext (-)$ and $\rext(-)$ respectively.

\begin{prop}
\label{prop:lext_deriv_lbs}
For $N \in \ob \liemod_\glie$, there are natural isomorphisms:
\[
\mathbb{L}_* \lbs \lext(N) \cong 
\left\{ 
\begin{array}{ll}
N & *= 0 \\
0 & *>0.
\end{array}
\right.
\]
\end{prop}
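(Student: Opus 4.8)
The plan is to reduce everything to Lemma~\ref{lem:lext_Uglie}: since $\lext(U(\glie))\cong UL(\g)$ is free, the exact functor $\lext$ carries a free resolution in $\liemod_\glie$ to a projective resolution in $\leibmod_\g$, and the computation collapses to the observation that $\lbs\circ\lext$ is the identity functor.

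Concretely, I would first fix a free resolution $P_\bullet\twoheadrightarrow N$ in $\liemod_\glie$ (every right $\glie$-module being a quotient of a direct sum of copies of $U(\glie)$). The functor $\lext$ is exact by Proposition~\ref{prop:LP_ses}, so $\lext(P_\bullet)\twoheadrightarrow\lext(N)$ is again a resolution; moreover $\lext$ is additive and commutes with arbitrary direct sums, as is clear from the description in Proposition~\ref{prop:LP_ses} (there $\lext(N)\rreg\cong(N\otimes\g\rreg)\oplus N$ and the $\g$-action is given by the displayed pointwise formulas). Hence, by Lemma~\ref{lem:lext_Uglie}, each $\lext(P_n)$ is $\lext$ of a direct sum of copies of $U(\glie)$, so a direct sum of copies of $UL(\g)$, hence a free (in particular projective) right $UL(\g)$-module. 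Thus $\lext(P_\bullet)$ is a projective resolution of $\lext(N)$, and since $\lbs$ is right exact and preserves projectives (Proposition~\ref{prop:leibsym_adjoints}), $\mathbb{L}_*\lbs\,\lext(N)\cong H_*\!\big(\lbs\,\lext(P_\bullet)\big)$.

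Next I would show that $\lbs\circ\lext\colon\liemod_\glie\to\liemod_\glie$ is naturally isomorphic to the identity functor. For $N=U(\glie)$ this is exactly Lemma~\ref{lem:lext_Uglie} followed by the isomorphism $\lbs\,UL(\g)\cong U(\glie)$ of Proposition~\ref{prop:lbs_ULg}, and additivity propagates it to all free $\glie$-modules. Since $\lbs\circ\lext$ and $\mathrm{Id}$ are both additive and right exact (right exactness of $\lbs$ from Proposition~\ref{prop:leibsym_adjoints}, exactness of $\lext$ from Proposition~\ref{prop:LP_ses}) and agree on free modules, comparing them on a free presentation $F_1\to F_0\to N\to 0$ upgrades the agreement to a natural isomorphism on all of $\liemod_\glie$. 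Applying this termwise, $\lbs\,\lext(P_\bullet)\cong P_\bullet$ as complexes of $\glie$-modules, whose homology is $N$ in degree $0$ and vanishes in positive degrees. This is the assertion.

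The steps are essentially formal; the one point demanding care is checking that the isomorphism of Lemma~\ref{lem:lext_Uglie} is genuinely natural and additive, so that the two reductions above (preservation of projectives, and $\lbs\,\lext\cong\mathrm{Id}$) go through — this one reads off the explicit formulas in Proposition~\ref{prop:LP_ses}. As an alternative, one can argue directly with the representing complex $\lext(N)\otimes_{UL(\g)}W_\bullet(\g)\kurd$ of Corollary~\ref{cor:total_left}: tensoring the short exact sequence $0\to(N\otimes\g\rreg)\leibasym\to\lext(N)\to N\leibsym\to 0$ of Proposition~\ref{prop:LP_ses} with the complex of free left modules $W_\bullet(\g)\kurd$ (each term being flat) gives a short exact sequence of complexes, and, using the splitting of $N\leibsym\otimes_{UL(\g)}W_\bullet(\g)\kurd$ from Proposition~\ref{prop:lder_sym_asym} together with the left action $z[m\otimes x,n]=[n\otimes z,-nz]$ — so that $d([0,n]\otimes\langle x_1,\dots,x_n\rangle)$ has $(N\otimes\g\rreg)$-component $n\otimes x_1\otimes\langle x_2,\dots,x_n\rangle$ — one finds that the associated connecting morphism restricts to $\pm$ the identity on the summand $\big((N\otimes\g\rreg)\leibasym\otimes_{UL(\g)}W_\bullet(\g)\kurd\big)[1]$, whence the total complex is quasi-isomorphic to $N$. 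In this variant the main obstacle is the bookkeeping involved in pinning down that connecting morphism.
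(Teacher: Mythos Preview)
Your proposal is correct and rests on the same key input as the paper's proof, namely Lemma~\ref{lem:lext_Uglie}, but the organization differs slightly. The paper first computes $\lbs\,\lext(N)\cong N$ in degree zero by hand: for $n\in N\leibsym\subset\lext(N)$ one has $xn+nx=n\otimes x\in(N\otimes\g\rreg)\leibasym$, so $(\lext(N))_0=(N\otimes\g\rreg)\leibasym$ and the surjection $\lext(N)\twoheadrightarrow N\leibsym$ induces the required natural isomorphism. For the higher vanishing it then runs a dimension-shifting argument: embed $N$ in a short exact sequence $0\to K\to\bigoplus_{\mathcal I}U(\glie)\to N\to0$, apply the exact functor $\lext$, and read off $\mathbb{L}_1\lbs\,\lext(N)=0$ from the long exact sequence (the middle term being projective by Lemma~\ref{lem:lext_Uglie}); an induction finishes. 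Your route is a clean repackaging: since $\lext$ is exact and sends free $\glie$-modules to free $UL(\g)$-modules, it carries an entire free resolution $P_\bullet\to N$ to a projective resolution of $\lext(N)$, so the whole computation collapses to $H_*(\lbs\,\lext(P_\bullet))\cong H_*(P_\bullet)$. This avoids the induction, at the cost of needing the natural isomorphism $\lbs\circ\lext\cong\mathrm{Id}$ as functors (not just objectwise); the paper's degree-zero computation in fact provides the canonical natural transformation (the adjoint of $\lext(N)\twoheadrightarrow N\leibsym$) that your right-exactness comparison requires, so you may prefer to cite that rather than assemble it from Lemma~\ref{lem:lext_Uglie} and Proposition~\ref{prop:lbs_ULg}.
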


\begin{proof}
First consider $\lbs \lext (N)$, which is isomorphic to $\lext (N)/ (\lext(N))_0$, by Proposition \ref{prop:leibsym_adjoints}. 
Now, for $n \in N\leibsym \subset \lext (N)$ (using the splitting as $\kring$-modules) and any $x \in \g$, calculating in $\lext (N)$ gives 
\[
xn + nx = n \otimes x \in (N\otimes \g\rreg)\leibasym,
\]
by the definition of $\lext (N)$. One deduces that $(N \otimes \g\rreg) \leibasym = (\lext(N))_0$, giving the natural isomorphism $\lbs \lext (	N) \cong N$ induced by $\lext (N) \twoheadrightarrow N\leibsym$.

It remains to show the vanishing of $(\mathbb{L} \lbs)_* \lext(N) $ for $* >0$. By Lemma \ref{lem:lext_Uglie}, in the case $N= U(\glie)$, $\lext (N)$ is isomorphic to $UL(\g)$ as a $\g$-module. Since this is projective, the result follows  in this case. 

For a general $N$, there exists a short exact sequence of $\glie$-modules of the form:
\[
0
\rightarrow 
K 
\rightarrow 
\bigoplus_{\mathcal{I}} U(\glie)
\rightarrow 
N 
\rightarrow 
0,  
\]
for some indexing set $\mathcal{I}$. Applying the exact functor $\lext(-)$ gives the short exact sequence of $\g$-modules:
\[
0
\rightarrow 
\lext (K)
\rightarrow 
\bigoplus_{\mathcal{I}} \lext (U(\glie))
\rightarrow 
\lext(N) 
\rightarrow 
0.
\]

By the above, the associated long exact sequence has tail:
\[
0
\rightarrow 
(\mathbb{L} \lbs)_1 \lext(N)
\rightarrow 
K 
\rightarrow 
\bigoplus_{\mathcal{I}} U(\glie)
\rightarrow 
N
\rightarrow 
0,
\]
which implies that $(\mathbb{L} \lbs)_1 \lext(N)=0$, since $K \rightarrow \bigoplus_{\mathcal{I}} U(\glie)$ is injective.

This holds for any $\glie$-module $N$; a standard induction using the long exact sequence then shows vanishing in all positive degrees.
\end{proof} 

\begin{rem}
\label{rem:interpret_lext_deriv_lbs}
The result of Proposition \ref{prop:lext_deriv_lbs} can be interpreted conceptually in the derived category of right $\glie$-modules as the natural isomorphism
$
\mathbb{L}\lbs (\lext(N))
\stackrel{\cong}{\rightarrow}
N
$. 
The extension defining $\lext (N)$ gives the distinguished triangle 
\[
\lext (N) 
\rightarrow N^s 
\rightarrow 
(N \otimes \g\rreg)\leibasym [1]
\rightarrow
\]
in the derived category of $\g$-modules, where the morphism $N^s 
\rightarrow 
(N \otimes \g\rreg)\leibasym [1]$ corresponds to the class $[\lext (N)]$.

Then, applying the total derived functor $\mathbb{L}\lbs$  gives the distinguished triangle 
\[
\mathbb{L}\lbs (\lext(N)) \cong N 
\rightarrow 
\mathbb{L}\lbs (N^s)
\rightarrow 
\mathbb{L}\lbs  ((N \otimes \g\rreg)\leibasym) [1]
\rightarrow .
\]
This is split by the canonical morphism $\mathbb{L} \lbs (N^s) \rightarrow N$.  This splitting should be compared with that given in Remark \ref{rem:split_lder}. In particular,  the split surjection $\mathbb{L}\lbs (N^s)
\rightarrow 
\mathbb{L}\lbs  ((N \otimes \g\rreg)\leibasym) [1]$ is induced by $[\lext (N)]$.
\end{rem}

The  result corresponding to Proposition \ref{prop:lext_deriv_lbs} for $\rext (N)$ uses the functor $(-)\asinv$ and its right derived functors. However, the argument reverses the line of reasoning, starting from the analogue of the split distinguished triangle in Remark \ref{rem:interpret_lext_deriv_lbs}.

\begin{prop}
\label{prop:rext_deriv_asinv}
For $N\in \ob \liemod_\glie$, there are natural isomorphisms
\[
\mathbb{R}^* (-)\asinv \rext (N) 
\cong 
\left\{
\begin{array}{ll}
N & *=0 \\
0 & *>0.
\end{array}
\right.
\]
\end{prop}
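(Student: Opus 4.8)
The plan is to mirror the argument of Proposition \ref{prop:lext_deriv_lbs}, but using the duality relationship established in Proposition \ref{prop:LP_ses}, namely $\lext(N)^\sharp \cong \rext(N^\sharp)$, together with the duality isomorphism for total derived functors recorded in Remark following Proposition \ref{prop:duality_complexes}, i.e.\ $(\mathbb{L}\lbs M)^\sharp \cong \mathbb{R}(-)\asinv(M^\sharp)$. In the case where $\g$ is finite-dimensional and $N$ is finite-dimensional, this would immediately reduce the statement to Proposition \ref{prop:lext_deriv_lbs}: apply $(-)^\sharp$ to $\mathbb{L}\lbs\lext(N^\sharp)\cong N^\sharp$ concentrated in degree zero. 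The difficulty is that $N$ need not be finite-dimensional, so duality does not apply directly; this is the main obstacle, and the honest approach is to argue intrinsically.

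First I would identify $\mathbb{R}^0(-)\asinv\rext(N) \cong N\asinv$ (since $(-)\asinv$ is left exact). Using the short exact sequence of Proposition \ref{prop:LP_ses}(1),
\[
0 \rightarrow N\leibasym \rightarrow \rext(N) \rightarrow \hom_\kring(\g\rreg,N)\leibsym \rightarrow 0,
\]
and Proposition \ref{prop:antisym_adjunction}(2), an element of $\rext(N)\asinv$ is one killed by every left action of $\g$. By the explicit description of the left action on $\rext(N)$ — determined by the evaluation component $\g\otimes\hom_\kring(\g,N)\rightarrow N$ — one checks that the submodule $\hom_\kring(\g\rreg,N)\leibsym$ contributes nothing to $\rext(N)\asinv$ (no nonzero symmetric-module element is left-annihilated unless it is), while the sub $N\leibasym$, being antisymmetric, contributes all of $N$. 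This gives the natural isomorphism $\mathbb{R}^0(-)\asinv\rext(N)\cong N$ induced by $N\leibasym\hookrightarrow\rext(N)$.

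For the vanishing in positive degrees, I would follow the dévissage strategy of Proposition \ref{prop:lext_deriv_lbs}, but dualized to use injective presentations rather than projective ones. By Lemma \ref{lem:lext_Uglie} and the duality $\lext(U(\glie))^\sharp\cong\rext(U(\glie)^\sharp)$, together with Corollary \ref{cor:right_inv_identify} identifying $(-)\asinv$ with $\hom_{\leibmod_\g}(U(\glie)\leibasym,-)$, one sees that $\rext$ of a suitable \emph{cofree} (injective) $\glie$-module — namely one of the form $\hom_\kring(U(\glie),V)$ for a $\kring$-vector space $V$, whose associated $\g$-module is $\hom_{\kring}$-dual to $UL(\g)$, hence injective in $\leibmod_\g$ — has vanishing higher $\mathbb{R}(-)\asinv$. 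Embedding a general $N$ in such a cofree module $\bigoplus$-dually, i.e.\ $0\rightarrow N\rightarrow I\rightarrow Q\rightarrow 0$ with $I$ cofree, applying the exact functor $\rext(-)$, and examining the long exact sequence of $\mathbb{R}^*(-)\asinv$: the relevant segment reads
\[
0 \rightarrow N \rightarrow I \rightarrow Q \rightarrow \mathbb{R}^1(-)\asinv\rext(N) \rightarrow 0
\]
using the degree-zero identification just established, and surjectivity of $I\rightarrow Q$ forces $\mathbb{R}^1(-)\asinv\rext(N)=0$. A standard dimension-shifting induction then kills all higher terms. The step I expect to be delicate is verifying that $\rext$ applied to the chosen injective cogenerators is again $(-)\asinv$-acyclic — this requires matching the extension structure of $\rext(\hom_\kring(U(\glie),V))$ with the $\kring$-linear dual of $UL(\g)\otimes V$, which is where the duality isomorphism $\lext(N)^\sharp\cong\rext(N^\sharp)$ of Proposition \ref{prop:LP_ses} does the essential work, at least on finite-dimensional pieces, with a colimit argument handling the general case.
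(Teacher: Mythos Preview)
Your degree-zero computation agrees with the paper's. For positive degrees, however, the paper does \emph{not} dualise the d\'evissage of Proposition~\ref{prop:lext_deriv_lbs}. Instead it applies $\hom_{UL(\g)}(W_\bullet(\g),-)$ to the short exact sequence defining $\rext(N)$ and invokes the chain-level splitting of Proposition~\ref{prop:rder_asinv_sinv} (equivalently the Feldvoss--Wagemann argument) to show that the connecting homomorphism $\mathbb{R}^{q}(-)\asinv\big(\hom_\kring(\g\rreg,N)\leibsym\big)\to\mathbb{R}^{q+1}(-)\asinv(N\leibasym)$ is an isomorphism for every $q\geq 0$; the middle term of the long exact sequence is then squeezed to zero. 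This works uniformly in $N$ and requires no acyclicity input for special coefficients.

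Your d\'evissage is the natural mirror of Proposition~\ref{prop:lext_deriv_lbs} and can be made to work, but the step you flag as delicate has a genuine gap as written. Duality plus Lemma~\ref{lem:lext_Uglie} does yield $\rext(U(\glie)^\sharp)\cong UL(\g)^\sharp$, which is injective in $\leibmod_\g$ and hence $(-)\asinv$-acyclic. But your proposed ``colimit argument'' to extend this to $\rext(\hom_\kring(U(\glie),V))$ for arbitrary $V$ fails: neither $\hom_\kring(U(\glie),-)$ nor $\hom_\kring(UL(\g),-)$ commutes with filtered colimits when the algebras are infinite-dimensional, and $\rext$ itself need not commute with direct sums when $\g$ is infinite-dimensional (because of the $\hom_\kring(\g,-)$ factor). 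The correct repair is a \emph{product} argument: $U(\glie)^\sharp$ is an injective cogenerator of $\liemod_\glie$, the functor $\rext$ commutes with products (inspect the formula $\rext(N)\rreg\cong N\oplus\hom_\kring(\g\rreg,N)$), and products of injectives are injective; hence every $N$ embeds in some $\prod_J U(\glie)^\sharp$ with $\rext\big(\prod_J U(\glie)^\sharp\big)\cong\prod_J UL(\g)^\sharp$ injective. With that amendment your induction goes through.
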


\begin{proof}
Using the explicit description of $(-)\asinv$ given in Proposition \ref{prop:antisym_adjunction}, one checks that $(\rext(N))\asinv=N$. 

For the higher derived functors, use the identification of the total right derived functor 
\[
\mathbb{R} (-)\asinv M = \hom_{UL(\g)} (W_\bullet (\g), M)
\]
for $M \in \ob \leibmod_\g$, given by Corollary \ref{cor:total_right}.

The defining short exact sequence for $\rext (N)$ therefore gives the short exact sequence of complexes:
\[
\xymatrix{
0
\ar[r]
&
\hom_{UL(\g)} (W_\bullet(\g), N\leibasym)
\ar[r]
\ar[d]^\cong
&
\hom_{UL(\g)} (W_\bullet(\g), \rext(N))
\ar[r]
\ar[d]^{\cong}
&
\hom_{UL(\g)} (W_\bullet(\g), \hom_\kring (\g\rreg, N)\leibsym)
\ar[r]
\ar[d]^\cong
&
0
\\
0
\ar[r]
&
\hom_\kring (\g^{\otimes \bullet} , N) 
\ar[r]
&
\hom_\kring (\g^{\otimes \bullet} , \rext(N))
\ar[r]
&
 \hom_\kring (\g^{\otimes \bullet +1} , N) 
\ar[r]
&
0,
}
\]
where the isomorphism $\hom_\kring (\g^{\otimes \bullet}, \hom_\kring (\g\rreg, N))\cong \hom_\kring (\g^{\otimes \bullet +1} , N)$ has been used for the bottom right hand term. 

The differential of the complex $\hom_\kring (\g^{\otimes \bullet} , N) $ takes into account that the $\g$-module $N\leibasym$ is antisymmetric and the differential of $\hom_{\leibmod_\g} (W_\bullet(\g), \hom_\kring (\g\rreg, N)\leibsym)$ takes into account that this complex arose from the symmetric coefficients $\hom_\kring (\g\rreg, N)\leibsym$.

The proof of \cite[Lemma 1.4(b)]{FW} shows that the outer complexes are isomorphic, up to the shift of degree. More precisely, the argument shows that the connecting isomorphism in the associated long exact sequence for cohomology is an isomorphism in positive degrees. The result follows. 
\end{proof}

\begin{rem}
\label{rem:rder_rext}
The class $[\rext (N)]$ is represented by a morphism $\hom_\kring (\g \rreg, N) \leibsym [-1] \rightarrow  N\leibasym$ in the derived category of $\g$-modules. Proposition \ref{prop:rext_deriv_asinv} shows that, on applying $\mathbb{R} (-)\asinv$, one obtains the split inclusion in the derived category of right $\glie$-modules:
\[
\mathbb{R} (-)\asinv (\hom_\kring (\g \rreg, N) \leibsym )[-1]
\rightarrow 
\mathbb{R} (-)\asinv (N\leibasym)
\]
corresponding to the splitting of Remark \ref{rem:split_rder}.
\end{rem}

\section{Relating $\ext$ as $\g$-modules and as $\glie$-modules}
\label{sect:ext}

Let $\g$ be a Leibniz algebra. The symmetric and antisymmetric module functors
\[
(-)\leibsym, \ (-)\leibasym 
: 
\liemod_\glie
\rightrightarrows 
\leibmod_\g
\]
are exact and have both left and right adjoints (see Section \ref{sect:sym_asym}). In particular, one has the following:

\begin{lem}
\label{lem:ext_leib(a)sym}
For $\g$ a Leibniz algebra and $ N_1, N_2$ $\glie$-modules, 
\begin{enumerate}
\item 
$(-)\leibsym$ induces a natural split monomorphism
$\ext^*_{\liemod_\glie}  (N_1, N_2) 
\hookrightarrow 
\ext^*_{\leibmod_\g}  (N_1\leibsym, N_2\leibsym)$; 
\item 
$(-)\leibasym$ induces a natural split monomorphism
$\ext^*_{\liemod_\glie}  (N_1, N_2) 
\hookrightarrow 
\ext^*_{\leibmod_\g}  (N_1\leibasym, N_2\leibasym)$.
\end{enumerate}
\end{lem}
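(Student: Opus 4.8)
The plan is to deduce this directly from the existence of adjoints established in Section \ref{sect:sym_asym}, using the standard fact that if a functor $F$ between abelian categories with enough injectives and projectives is exact and admits both an exact left adjoint and an exact right adjoint, then the induced map on $\ext$-groups is a split monomorphism. More precisely, for the symmetric case, I would argue as follows. First, since $(-)\leibsym : \liemod_\glie \rightarrow \leibmod_\g$ is exact (Proposition \ref{prop:leibsym_adjoints}), it induces a natural transformation $\ext^*_{\liemod_\glie}(N_1, N_2) \rightarrow \ext^*_{\leibmod_\g}(N_1\leibsym, N_2\leibsym)$, obtained by applying $(-)\leibsym$ to Yoneda extensions (or, equivalently, via the derived-functor description, using that $(-)\leibsym$ sends an injective resolution of $N_2$ to a complex computing the right-hand side, since $(-)\leibsym$ preserves injectives — it is right adjoint to the exact functor $\lbs$, so it preserves injectives — and is exact).

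The second step is to produce a retraction. Because $(-)\leibsym$ has an exact left adjoint $\lbs$, we get for each $i$ a natural map $\ext^*_{\leibmod_\g}(N_1\leibsym, N_2\leibsym) \rightarrow \ext^*_{\liemod_\glie}(\lbs(N_1\leibsym), N_2)$: the exactness of $\lbs$ means it carries Yoneda extensions of $\g$-modules to Yoneda extensions of $\glie$-modules, and adjunction identifies $\hom_{\leibmod_\g}(N_1\leibsym, -)$-style data with $\hom_{\liemod_\glie}(\lbs(N_1\leibsym), -)$-data (one should note that the adjunction counit $\lbs(N_2\leibsym) \rightarrow N_2$ is an isomorphism, which also follows from the description in Proposition \ref{prop:leibsym_adjoints}, since $(N_2)_0 = 0$ for a symmetric module, or from Proposition \ref{prop:lext_deriv_lbs} applied after noting $N_2\leibsym$ is a summand of $\lext(N_2)$). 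Composing with the iso $\lbs(N_1\leibsym) \cong N_1$ gives a map $\ext^*_{\leibmod_\g}(N_1\leibsym, N_2\leibsym) \rightarrow \ext^*_{\liemod_\glie}(N_1, N_2)$, and the unit–counit triangle identities show this composite with the original map is the identity, giving the splitting. The antisymmetric case is identical, using instead the exact left adjoint $\lbas$, the fact that $(-)\leibasym$ preserves injectives (right adjoint to the exact $\lbas$), and the isomorphism $\lbas(N_2\leibasym) \cong N_2$ (Proposition \ref{prop:antisym_adjunction}, since $(N_2)_1 = 0$ for an antisymmetric module).

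Alternatively — and this may be cleaner to write — I would phrase everything through the homological algebra identifications already available. The functor $(-)\leibsym$ corresponds to restriction along $d_1 : UL(\g) \rightarrow U(\glie)$, which admits the splitting $s_0$ with $d_1 s_0 = \id$. Restriction along a split surjection of rings always induces split monomorphisms on $\ext$ (the section $s_0$ induces the retraction, essentially by functoriality of $\ext$ in the ring variable together with the fact that restriction along the composite $d_1 s_0 = \id$ is the identity functor); one must only check that the relevant restriction functors behave well with respect to resolutions, which is guaranteed by flatness/projectivity of $U(\glie)$ over $UL(\g)$ — but this is exactly the content of the fact that $\lbs = - \otimes_{UL(\g)}^{d_1} U(\glie)$ preserves projectives and is exact. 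Likewise $(-)\leibasym$ corresponds to restriction along $d_0$, split by $s_0$ with $d_0 s_0 = \id$.

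The main obstacle, such as it is, is bookkeeping rather than conceptual: one must be careful that the natural map on $\ext$ induced by $(-)\leibsym$ really is split by the map induced by $\lbs$ \emph{as a natural transformation in both variables}, and in particular that the counit $\lbs(N_2\leibsym) \rightarrow N_2$ being an isomorphism is used correctly to identify the target. There is no hard estimate or delicate construction here — the entire statement is a formal consequence of the adjunction structure and exactness properties recorded in Propositions \ref{prop:leibsym_adjoints} and \ref{prop:antisym_adjunction}, so the proof should be short, with the only real choice being whether to present it at the level of Yoneda $\ext$, of derived functors, or of the algebra maps $d_0, d_1, s_0$.
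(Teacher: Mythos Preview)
Your alternative approach via the section $s_0$ is correct and is exactly what the paper does: the retract is induced by the exact restriction functor $(-)\rreg : \leibmod_\g \rightarrow \liemod_\glie$, which is restriction along $s_0$, and the identities $d_1 s_0 = \id = d_0 s_0$ make $(-)\rreg \circ (-)\leibsym$ and $(-)\rreg \circ (-)\leibasym$ the identity on $\liemod_\glie$. Since all functors involved are exact, functoriality of Yoneda $\ext$ gives the splitting immediately.

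Your primary approach, however, has a genuine gap: you repeatedly assert that $\lbs$ is exact (e.g.\ ``$(-)\leibsym$ has an exact left adjoint $\lbs$'', ``flatness/projectivity of $U(\glie)$ over $UL(\g)$'', ``$\lbs \ldots$ is exact''). This is false in general. Proposition~\ref{prop:leibsym_adjoints} only gives that $\lbs$ is right exact; its higher left derived functors are nontrivial---by Proposition~\ref{prop:hom_lder} they compute Leibniz homology $HL_*(\g;-)$. Equivalently, ${}^{d_1}U(\glie)$ is not flat as a left $UL(\g)$-module. Consequently $(-)\leibsym$ need not preserve injectives, and applying $\lbs$ to a Yoneda extension in $\leibmod_\g$ does not produce an exact sequence in $\liemod_\glie$, so your proposed retraction via $\lbs$ does not go through as written. (The same objection applies to $\lbas$ in the antisymmetric case.) Drop this line of argument and keep only the $s_0$/$(-)\rreg$ version; that is both correct and what the paper does.
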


\begin{proof}
The natural morphisms are induced by the respective exact functors $(-)\leibsym$ and $(-)\leibasym$. The restriction functor $(-)\rreg : \leibmod_\g \rightarrow \liemod_\glie$ of Proposition \ref{prop:right-restriction} is exact and induces the natural retract.
\end{proof}

The main result of the paper  refines this by identifying the complement:

\begin{thm}
\label{thm:reduction}
For $\g$ a Leibniz algebra and $ N_1, N_2$ $\glie$-modules, 
\begin{enumerate}
\item 
there is a natural isomorphism
\[
\ext^* _{\leibmod_\g}(N_1\leibsym, N_2 \leibsym )
\cong 
\ext^*_{\liemod_\glie} (N_1 , N_2) 
\oplus
\ext^{*-1} _{\leibmod_\g}((N_1 \otimes \g\rreg)\leibasym, N_2 \leibsym )
\]
and the inclusion $\ext^{*-1} _{\leibmod_\g}((N_1 \otimes \g\rreg)\leibasym,  N_2  \leibsym )
\hookrightarrow \ext^* _{\leibmod_\g}( N_1 \leibsym,  N_2  \leibsym )
$ is given by Yoneda product with the class  $[\lext (N_1)]$;
\item 
there is a natural isomorphism
\[
\ext^* _{\leibmod_\g}( N_1 \leibasym,  N_2  \leibasym )
\cong 
\ext^*_{\liemod_\glie} (N_1 , N_2) 
\oplus
\ext^{*-1} _{\leibmod_\g}( N_1 \leibasym, \hom_\kring (\g\rreg, N_2) \leibsym )
\]
and the inclusion $\ext^{*-1} _{\leibmod_\g}( N_1 \leibasym, \hom_\kring (\g\rreg, N_2) \leibsym )
\hookrightarrow \ext^* _{\leibmod_\g}( N_1 \leibasym,  N_2  \leibasym )
$ is given by Yoneda product with the class  $[\rext(N_2)]$.
\end{enumerate}
\end{thm}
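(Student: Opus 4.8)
The plan is to exploit the defining short exact sequences for $\lext(N_1)$ and $\rext(N_2)$ from Proposition~\ref{prop:LP_ses}, together with the vanishing results Propositions~\ref{prop:lext_deriv_lbs} and~\ref{prop:rext_deriv_asinv}, reformulated via the adjunctions $\lbs \dashv (-)\leibsym$ and $(-)\leibasym \dashv (-)\asinv$. For part~(1), I would apply the contravariant functor $\ext^*_{\leibmod_\g}(-, N_2\leibsym)$ to the short exact sequence
\[
0 \rightarrow (N_1 \otimes \g\rreg)\leibasym \rightarrow \lext(N_1) \rightarrow N_1\leibsym \rightarrow 0,
\]
obtaining a long exact sequence. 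The connecting maps in this sequence are, by definition of the Yoneda product, given by composition with $[\lext(N_1)]$. So the whole theorem reduces to showing that this long exact sequence breaks up into short exact sequences, i.e.\ that the maps $\ext^*_{\leibmod_\g}(\lext(N_1), N_2\leibsym) \rightarrow \ext^*_{\leibmod_\g}((N_1\otimes\g\rreg)\leibasym, N_2\leibsym)$ vanish, together with identifying the remaining term $\ext^*_{\leibmod_\g}(\lext(N_1), N_2\leibsym)$ with $\ext^*_{\liemod_\glie}(N_1, N_2)$.

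For the first claim, I would use the adjunction isomorphism: since $\lbs$ is left adjoint to $(-)\leibsym$ and preserves projectives (Proposition~\ref{prop:leibsym_adjoints}), there is a hyper-Ext / Grothendieck-type spectral sequence, or more simply, because $\lext(N_1)$ has a projective resolution whose image under $\lbs$ computes $\mathbb{L}\lbs \lext(N_1)$, one gets $\ext^*_{\leibmod_\g}(\lext(N_1), N_2\leibsym) \cong \ext^*_{\liemod_\glie}(\mathbb{L}\lbs\lext(N_1), N_2)$; by Proposition~\ref{prop:lext_deriv_lbs} the derived functor is concentrated in degree $0$ and equals $N_1$, so this is $\ext^*_{\liemod_\glie}(N_1, N_2)$. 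A cleaner way to package this: take a projective resolution $P_\bullet \twoheadrightarrow N_1$ in $\liemod_\glie$; then $\lext(P_\bullet)$ need not be projective, but one can compare with the adjunction and the vanishing of $\mathbb{L}_{>0}\lbs\lext(N_1)$ to deduce that $R\hom_{\leibmod_\g}(\lext(N_1), N_2\leibsym) \simeq R\hom_{\liemod_\glie}(N_1, N_2)$. Combined with Lemma~\ref{lem:ext_leib(a)sym} giving the split injection $\ext^*_{\liemod_\glie}(N_1,N_2) \hookrightarrow \ext^*_{\leibmod_\g}(N_1\leibsym, N_2\leibsym)$, compatibility of the splitting forces the long exact sequence to degenerate into the claimed direct sum decomposition, with the complementary summand included via Yoneda product with $[\lext(N_1)]$.

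Part~(2) is dual: apply $\ext^*_{\leibmod_\g}(N_1\leibasym, -)$ to
\[
0 \rightarrow N_2\leibasym \rightarrow \rext(N_2) \rightarrow \hom_\kring(\g\rreg, N_2)\leibsym \rightarrow 0,
\]
and use that $(-)\asinv$ is right adjoint to $(-)\leibasym$ and preserves injectives (Proposition~\ref{prop:antisym_adjunction}), so $\ext^*_{\leibmod_\g}(N_1\leibasym, \rext(N_2)) \cong \ext^*_{\liemod_\glie}(N_1, \mathbb{R}(-)\asinv\rext(N_2)) \cong \ext^*_{\liemod_\glie}(N_1, N_2)$ by Proposition~\ref{prop:rext_deriv_asinv}. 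Again the connecting homomorphism is Yoneda product with $[\rext(N_2)]$, the relevant maps vanish for the same adjunction reason, and Lemma~\ref{lem:ext_leib(a)sym}(2) pins down the splitting. The compatibility with the duality isomorphism $\lext(N)^\sharp \cong \rext(N^\sharp)$ of Proposition~\ref{prop:LP_ses} and Corollary~\ref{cor:duality_ext} could also be invoked, but since $\g$ need not be finite-dimensional I would argue~(2) directly rather than deduce it from~(1).

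The main obstacle I anticipate is the passage from ``$\mathbb{L}\lbs\lext(N_1)$ is concentrated in degree $0$'' to the clean identification $\ext^*_{\leibmod_\g}(\lext(N_1), N_2\leibsym) \cong \ext^*_{\liemod_\glie}(N_1, N_2)$ as \emph{natural} isomorphisms compatible with the Yoneda-product structure and with the retraction of Lemma~\ref{lem:ext_leib(a)sym}. Technically this is a base-change/adjunction spectral sequence argument (a Cartan--Eilenberg style change-of-rings computation along $d_1 : UL(\g) \to U(\glie)$, using that $\lext(N_1)$ plays the role of an ``induced-up'' resolution of $N_1$), and the care required is in checking the edge morphisms match the maps coming from functoriality of $(-)\leibsym$ — i.e.\ that the abstract splitting of the long exact sequence really is the one prescribed in the statement. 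Everything else — exactness of the functors, the shape of the long exact sequence, the identification of connecting maps with Yoneda products — is formal.
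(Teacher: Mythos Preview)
Your proposal is correct and follows essentially the same route as the paper: apply $\ext^*_{\leibmod_\g}(-,N_2\leibsym)$ (resp.\ $\ext^*_{\leibmod_\g}(N_1\leibasym,-)$) to the defining short exact sequence of $\lext(N_1)$ (resp.\ $\rext(N_2)$), identify $\ext^*_{\leibmod_\g}(\lext(N_1),N_2\leibsym)\cong\ext^*_{\liemod_\glie}(N_1,N_2)$ via the collapsing Grothendieck spectral sequence for the adjunction $\lbs\dashv(-)\leibsym$ together with Proposition~\ref{prop:lext_deriv_lbs}, and split the long exact sequence using Lemma~\ref{lem:ext_leib(a)sym}. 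The paper packages the edge-morphism compatibility you flag as a potential obstacle into a separate Proposition~\ref{prop:alternative_lext_rext}, which asserts precisely that the composite $\ext^*_{\liemod_\glie}(N_1,N_2)\to\ext^*_{\leibmod_\g}(N_1\leibsym,N_2\leibsym)\to\ext^*_{\leibmod_\g}(\lext(N_1),N_2\leibsym)$ is the spectral-sequence edge isomorphism; once this is established the splitting is immediate from the commutative triangle, exactly as you outline.
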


\begin{rem}
The rôle of the extension classes $[\lext (N_1)]$ and $[\rext (N_2)]$ in Theorem \ref{thm:reduction} was inspired by the usage of the extension $\rext (-)$ by Loday and Pirashvili (see 
 \cite[Remark 3.2]{MR1383474}).
\end{rem}

Theorem \ref{thm:reduction} is  a consequence of the following, which highlights the homological rôle of the functors $\lext (-)$ and $\rext (-)$. 

\begin{prop}
\label{prop:alternative_lext_rext}
For $\g$ a Leibniz algebra and $ N_1, N_2$ $\glie$-modules, 
\begin{enumerate}
\item 
the composite 
\[
\ext^*_{\liemod_\glie} (N_1, N_2) 
\rightarrow 
\ext^*_{\leibmod_\g} (N_1\leibsym, N_2 \leibsym) 
\rightarrow 
\ext^*_{\leibmod_\g} (\lext (N_1), N_2 \leibsym) 
\]
is an isomorphism, where the first map is induced by the functor $(-)\leibsym$ and the second by the canonical surjection $\lext (N_1) \twoheadrightarrow N_1\leibsym$;
\item 
the composite 
\[
\ext^*_{\liemod_\glie} (N_1, N_2) 
\rightarrow 
\ext^*_{\leibmod_\g} (N_1\leibasym, N_2 \leibasym) 
\rightarrow 
\ext^*_{\leibmod_\g} (N_1\leibasym, \rext (N_2)) 
\]
is an isomorphism, where the first map is induced by the functor $(-)\leibasym$ and the second by the canonical inclusion $N_2 \leibasym \hookrightarrow \rext (N_2)$.
\end{enumerate}
\end{prop}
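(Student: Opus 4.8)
The plan is to prove statement (1) by applying the long exact sequence of $\ext^*_{\leibmod_\g}(-, N_2\leibsym)$ to the defining short exact sequence of $\lext(N_1)$, and then to identify the relevant terms using the derived-functor adjunction coming from $\lbs \dashv (-)\leibsym$ together with the homological vanishing established in Proposition \ref{prop:lext_deriv_lbs}. Statement (2) will then follow by a dual argument using $(-)\asinv$ and Proposition \ref{prop:rext_deriv_asinv}, or alternatively by invoking the duality isomorphisms of Section \ref{sect:duality}.

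For (1), I would begin with the short exact sequence of $\g$-modules
\[
0
\rightarrow
(N_1 \otimes \g\rreg)\leibasym
\rightarrow
\lext (N_1)
\rightarrow
N_1\leibsym
\rightarrow
0
\]
of Proposition \ref{prop:LP_ses}(2), and apply $\ext^*_{\leibmod_\g}(-, N_2\leibsym)$ to obtain a long exact sequence relating $\ext^*_{\leibmod_\g}(N_1\leibsym, N_2\leibsym)$, $\ext^*_{\leibmod_\g}(\lext(N_1), N_2\leibsym)$ and $\ext^*_{\leibmod_\g}((N_1\otimes\g\rreg)\leibasym, N_2\leibsym)$. The key point is to compute $\ext^*_{\leibmod_\g}(\lext(N_1), N_2\leibsym)$ independently: since $(-)\leibsym = (d_1)^!$ is exact and $\lbs$ is its left adjoint, taking a projective resolution $P_\bullet \to \lext(N_1)$ gives $\ext^*_{\leibmod_\g}(\lext(N_1), N_2\leibsym) \cong H^*\hom_{\liemod_\glie}(\lbs P_\bullet, N_2)$. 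By Proposition \ref{prop:lext_deriv_lbs}, $\lbs P_\bullet$ is a resolution of $N_1$ in $\liemod_\glie$ (it computes $\mathbb{L}_*\lbs\lext(N_1)$, which is concentrated in degree $0$ with value $N_1$); hence $\ext^*_{\leibmod_\g}(\lext(N_1), N_2\leibsym) \cong \ext^*_{\liemod_\glie}(N_1, N_2)$. The remaining step is to check that the composite in the statement is precisely this isomorphism: the first map $\ext^*_{\liemod_\glie}(N_1, N_2) \to \ext^*_{\leibmod_\g}(N_1\leibsym, N_2\leibsym)$ is the adjunction-induced map (functoriality of $(-)\leibsym = (d_1)^!$ combined with the unit $N_1 \to \lbs N_1\leibsym = N_1$), and composing with restriction along $\lext(N_1) \twoheadrightarrow N_1\leibsym$ unwinds, via naturality of the derived adjunction, to the identity on $\ext^*_{\liemod_\glie}(N_1, N_2)$; this is a diagram-chase at the level of the comparison of resolutions.

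For (2), the cleanest route is to dualize: since $(N^\sharp)\leibsym \cong (N\leibasym)^\sharp$ and $\lext(N)^\sharp \cong \rext(N^\sharp)$ by Propositions \ref{prop:duality_compatibility} and \ref{prop:LP_ses}, statement (2) for finite-dimensional modules follows formally from (1) by applying $(-)^\sharp$; the general case is then obtained by writing an arbitrary $\glie$-module as a filtered colimit, or more robustly by repeating the argument of (1) with $(-)\asinv$ in place of $\lbs$: apply $\ext^*_{\leibmod_\g}(N_1\leibasym, -)$ to the short exact sequence $0 \to N_2\leibasym \to \rext(N_2) \to \hom_\kring(\g\rreg, N_2)\leibsym \to 0$, use that $(-)\leibasym = (d_0)^!$ has right adjoint $(-)\asinv$ to identify $\ext^*_{\leibmod_\g}(N_1\leibasym, \rext(N_2)) \cong \ext^*_{\liemod_\glie}(N_1, \mathbb{R}(-)\asinv\rext(N_2)) \cong \ext^*_{\liemod_\glie}(N_1, N_2)$ by Proposition \ref{prop:rext_deriv_asinv}, and verify the composite is this isomorphism by the dual diagram-chase.

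The main obstacle I anticipate is not the vanishing input — that is already packaged in Propositions \ref{prop:lext_deriv_lbs} and \ref{prop:rext_deriv_asinv} — but rather the bookkeeping needed to check that the \emph{specific} composite written in the statement agrees with the abstractly-produced isomorphism, i.e.\ that the map induced by the surjection $\lext(N_1) \twoheadrightarrow N_1\leibsym$ (respectively the inclusion $N_2\leibasym \hookrightarrow \rext(N_2)$) is compatible with the derived-adjunction identification. This requires carefully tracking the adjunction unit $N_1 \xrightarrow{\cong} \lbs(N_1\leibsym)$ (Proposition \ref{prop:leibsym_adjoints}) and the isomorphism $\mathbb{L}\lbs\lext(N_1) \cong N_1$ through the comparison of projective resolutions, and confirming that the triangle of these three maps commutes; this is essentially a naturality statement for the counit of the derived adjunction evaluated on the distinguished triangle of Remark \ref{rem:interpret_lext_deriv_lbs}.
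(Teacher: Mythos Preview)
Your proposal is correct and its core coincides with the paper's: both reduce to the vanishing input of Propositions \ref{prop:lext_deriv_lbs} and \ref{prop:rext_deriv_asinv}. The paper packages this via the Grothendieck spectral sequence $\ext^p_{\liemod_\glie}(\mathbb{L}_q\lbs M, N) \Rightarrow \ext^{p+q}_{\leibmod_\g}(M, N\leibsym)$ (and its dual for $(-)\asinv$), which collapses at $E_2$ for $M = \lext(N_1)$ precisely because the higher $\mathbb{L}_q\lbs$ vanish, the desired map being the edge isomorphism. Your argument is the unpacked version of this degenerate case: since $\lbs$ preserves projectives and $\mathbb{L}_*\lbs\lext(N_1)$ is concentrated in degree zero, applying $\lbs$ to a projective resolution of $\lext(N_1)$ yields a projective resolution of $N_1$ in $\liemod_\glie$, whence the isomorphism directly. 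This is more elementary and avoids invoking spectral-sequence machinery, at no real cost; the identification of the composite with the edge map is the same bookkeeping in either language. Two minor remarks: the long exact sequence you open with is not actually used for the proposition itself (it is what drives the subsequent Theorem \ref{thm:reduction}, so you may as well defer it); and your first suggestion for (2) via $(-)^\sharp$ should be discarded in favour of the direct argument you also give, since duality is not an equivalence on all of $\leibmod_\g$ (cf.\ Remark \ref{rem:duality_not_iso}) and the filtered-colimit reduction you propose does not interact well with $\ext$ in the first variable.
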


\begin{proof}
For $\g$ a Leibniz algebra, $M$ a $\g$-module and $N$ a $\glie$-module, there are natural, convergent first quadrant cohomological Grothendieck spectral sequences: 
\begin{eqnarray*}
\ext^p_{\liemod_\glie} ( \mathbb{L}_q\lbs M, N) 
&\Rightarrow &
\ext^{p+q}_{\leibmod_\g}  (M,  N\leibsym)
\\
\ext^p_{\liemod_\glie} ( N , \mathbb{R}^q(-)\asinv M) ) 
&\Rightarrow &
\ext^{p+q}_{\leibmod_\g}  ( N\leibasym, M).
\end{eqnarray*}

For the first statement, take $M= \lext (N_1)$ and $N=N_2$. Proposition \ref{prop:lext_deriv_lbs} gives that 
$\mathbb{L}_q\lbs (\lext (N_1))=0$ for $q>0$ and is $N_1$ for $q=0$. It follows that the first spectral sequence collapses at the $E_2$-page and the required isomorphism is the edge isomorphism of the spectral sequence. 

For the second statement, take $M = \rext (N_2)$ and $N= N_1$. Proposition \ref{prop:rext_deriv_asinv} gives that $\mathbb{R}^q (-)\asinv (\rext (N_2))=0$ for $q>0$ and is $N_2$ for $q=0$. Thus the second spectral sequence collapses at the $E_2$-page and, once again, the required isomorphism is given by the edge isomorphism.
\end{proof}

\begin{proof}[Proof of Theorem \ref{thm:reduction}]
This follows  from Proposition \ref{prop:alternative_lext_rext}. The argument is given for the first case; the second is proved similarly. 

Consider the short exact sequence $0\rightarrow (N_1 \otimes \g \rreg)\leibasym \rightarrow \lext (N_1) \rightarrow N_1 \leibsym \rightarrow 0$ in $\leibmod_\g$. Applying $\ext^*_{\leibmod_\g} (-, N_2\leibsym)$ gives 
\[
\xymatrix{
&&&
\ext^*_{\liemod_\glie} (N_1, N_2) 
\ar[d]_\cong 
\ar[ld]
\\
\ldots
\ar[r]
&
\ext^{*-1}_{\leibmod_\g} ((N_1 \otimes \g\rreg)\leibasym, N_2 \leibsym) 
\ar[r]
&
\ext^*_{\leibmod_\g} (N_1\leibsym, N_2 \leibsym) 
\ar[r]
&
\ext^*_{\leibmod_\g} (\lext(N_1), N_2 \leibsym)
\ar[r]
& \ldots
 }
\]
in which the horizontal row is the associated long exact sequence and the commutative triangle is provided by Proposition \ref{prop:alternative_lext_rext}. This yields the required splitting.
\end{proof}

\begin{rem}
A conceptual proof of Theorem \ref{thm:reduction} can  be given  working at the level of the appropriate derived categories, as sketched below. 

For the first statement, as in Remark \ref{rem:interpret_lext_deriv_lbs}, applying the total derived functor $\mathbb{L} \lbs $ to the  distinguished triangle in the derived category of $\leibmod_\g$ 
associated to $\lext (N_1)$: 
\[
\lext (N_1) 
\rightarrow 
N_1\leibsym 
\rightarrow 
(N_1 \otimes \g\rreg)\leibasym
[1]
\rightarrow 
\]
gives the split distinguished triangle
\[
N_1
\rightarrow 
\mathbb{L}\lbs(N_1\leibsym )
\rightarrow 
\mathbb{L}\lbs \big((N_1 \otimes \g\rreg)\leibasym\big) 
[1]
\rightarrow 
\]
in the derived category of $\liemod_\glie$. Calculating morphisms $[-, N_2[*]]$ in this derived category then gives the result, using that $\mathbb{L}\lbs$ is left adjoint to $(-)\leibsym$.  
 
 The approach to the second statement is similar, this time appealing to Remark \ref{rem:rder_rext}. Namely, there is a distinguished triangle in the derived category of $\leibmod_\g$ associated to $\rext (N_2)$:
 \[
 \hom_\kring (\g\rreg, N_s)\leibsym [-1]
 \rightarrow 
 N_2\leibasym 
 \rightarrow 
 \rext (N_2) 
 \rightarrow 
 \]
where the left hand morphism represents $[\rext (N_2)]$. Applying $\mathbb{R} (-)\asinv$ gives the distinguished triangle: 
\[
\mathbb{R} (-)\asinv\big(\hom_\kring (\g \rreg, N_2) \leibsym \big)[-1]
 \rightarrow 
 \mathbb{R} (-)\asinv(N_2  \leibasym)
  \rightarrow
 N_2 \rightarrow, 
\]
since $\mathbb{R} (-)\asinv(\rext (N_2)) \cong N_2$ in the derived category of $\liemod_\glie$, by Proposition \ref{prop:rext_deriv_asinv}. This distinguished triangle is split. The result follows by applying $[N_1, -[*]]$ using that $\mathbb{R} (-) \asinv$ is right adjoint to $(-)\leibasym$.
\end{rem}

\subsection{Example applications}

Recall that Lie algebra cohomology $H^* (\glie; N)$ with coefficients in a $\glie$-module $N$ is naturally isomorphic to $\ext^* _{\liemod_\glie} (\kring, N)$. Thus Theorem \ref{thm:reduction} implies:

\begin{cor}
\label{cor:reduction_one_trivial}
For $\g$ a Leibniz algebra and $N$ a $\glie$-module, there are natural isomorphisms:
\begin{eqnarray*}
\ext^* _{\leibmod_\g}(\kring, N \leibsym )
&\cong & 
H^* (\glie; N) 
\oplus
\ext^{*-1} _{\leibmod_\g}((\g\rreg)\leibasym, N \leibsym )
\\
\ext^* _{\leibmod_\g}(N \leibsym, \kring )
&\cong & 
H^* (\glie; N^\sharp)  
\oplus
\ext^{*-1} _{\leibmod_\g}((N \otimes \g\rreg)\leibasym, \kring )
\\
\ext^* _{\leibmod_\g}(\kring, N \leibasym )
&\cong &  
 H^* (\glie; N)
\oplus
\ext^{*-1} _{\leibmod_\g}(\kring , \hom_\kring (\g\rreg, N) \leibsym )
\\
\ext^* _{\leibmod_\g}(N \leibasym, \kring )
&\cong &
H^* (\glie; N^\sharp)  
\oplus
\ext^{*-1} _{\leibmod_\g}(N\leibasym,  ((\g\rreg)^\sharp) \leibsym),
\end{eqnarray*}
where $\kring$ is the trivial $\g$-module.

Hence there are natural isomorphisms:
\begin{eqnarray*}
\ext^* _{\leibmod_\g}(N \leibsym, \kring )
&\cong & 
H^* (\glie; N^\sharp)  
\oplus
H^{*-1} (\glie; (N \otimes \g\rreg)^\sharp)
\oplus
\ext^{*-2} _{\leibmod_\g}((N \otimes \g\rreg)\leibasym ,  ((\g\rreg)^\sharp) \leibsym)
\\
\ext^* _{\leibmod_\g}(\kring, N \leibasym )
&\cong &  
 H^* (\glie; N)
\oplus
H^{*-1} (\glie;  \hom_\kring (\g\rreg, N))
\oplus 
\ext^{*-2} _{\leibmod_\g}((\g\rreg)\leibasym, \hom_\kring (\g\rreg, N) \leibsym ).
\end{eqnarray*}
\end{cor}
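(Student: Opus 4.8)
The plan is to specialize Theorem~\ref{thm:reduction} to the case in which one of the two coefficient $\glie$-modules is the trivial module $\kring$. The point of departure is the observation that, regarded as a $\g$-module, the trivial module is simultaneously symmetric and antisymmetric, so that $\kring\leibsym \cong \kring \cong \kring\leibasym$; in addition one has the evident isomorphisms of $\glie$-modules $\kring \otimes \g\rreg \cong \g\rreg$ and $\hom_\kring(\g\rreg, \kring) \cong (\g\rreg)^\sharp$.

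Next I would record the two identifications of Lie-algebra $\ext$-groups that are needed. The first, $\ext^*_{\liemod_\glie}(\kring, N) \cong H^*(\glie; N)$, is the description of Lie algebra cohomology recalled above. The second, $\ext^*_{\liemod_\glie}(N, \kring) \cong H^*(\glie; N^\sharp)$, I would deduce from the closed tensor structure of Proposition~\ref{prop:closed_tensor_lie} together with the fact that $U(\glie)$ is a Hopf algebra: choosing a projective resolution $Q_\bullet \twoheadrightarrow \kring$ in $\liemod_\glie$ and tensoring with $N$ produces a projective resolution $Q_\bullet \otimes N \twoheadrightarrow N$ (each $Q_q \otimes N$ is projective, since for $Q_q$ free the $\glie$-module $U(\glie)\otimes N$ with the diagonal action is again free, by the standard untwisting isomorphism), while the adjunction $(-\otimes N) \dashv \hom_\kring(N, -)$ identifies the cochain complex $\hom_{\liemod_\glie}(Q_\bullet \otimes N, \kring)$ with $\hom_{\liemod_\glie}(Q_\bullet, N^\sharp)$; here $N^\sharp$ carries the $\chi$-twisted action, consistently with Proposition~\ref{prop:duality_compatibility}.

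With these in hand the four displayed isomorphisms follow at once: the first is Theorem~\ref{thm:reduction}(1) taken with $N_1 = \kring$, $N_2 = N$; the second is Theorem~\ref{thm:reduction}(1) with $N_1 = N$, $N_2 = \kring$; the third is Theorem~\ref{thm:reduction}(2) with $N_1 = \kring$, $N_2 = N$; and the fourth is Theorem~\ref{thm:reduction}(2) with $N_1 = N$, $N_2 = \kring$ --- in each case one rewrites the Lie-algebra summand by the previous paragraph and simplifies the residual Leibniz summand using the isomorphisms of the first paragraph. The two concluding isomorphisms are then obtained by iteration: in the second displayed isomorphism the residual term $\ext^{*-1}_{\leibmod_\g}((N \otimes \g\rreg)\leibasym, \kring)$ has antisymmetric source and antisymmetric target $\kring \cong \kring\leibasym$, so Theorem~\ref{thm:reduction}(2) applies once more with $N_1 = N \otimes \g\rreg$, $N_2 = \kring$; in the third, the residual term $\ext^{*-1}_{\leibmod_\g}(\kring, \hom_\kring(\g\rreg, N)\leibsym)$ has symmetric source $\kring \cong \kring\leibsym$ and symmetric target, so Theorem~\ref{thm:reduction}(1) applies once more with $N_1 = \kring$, $N_2 = \hom_\kring(\g\rreg, N)$.

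The argument is essentially formal, so there is no serious obstacle: the work lies in the bookkeeping, namely tracking the homological degree shifts through the iteration and checking that the naturality in $N$ claimed in the statement survives all of the identifications above --- which it does, since Theorem~\ref{thm:reduction} and each auxiliary isomorphism invoked is natural. The single non-tautological ingredient is the identification $\ext^*_{\liemod_\glie}(N, \kring) \cong H^*(\glie; N^\sharp)$, which is precisely where the Hopf-algebra structure of $U(\glie)$ --- absent for $UL(\g)$, cf.\ Remark~\ref{rem:duality_not_iso} --- enters in an essential way.
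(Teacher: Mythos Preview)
Your proposal is correct and follows the same approach as the paper: direct specialization of Theorem~\ref{thm:reduction} with one coefficient equal to $\kring$, followed by iteration for the last two isomorphisms. You supply more detail than the paper does --- in particular the justification of $\ext^*_{\liemod_\glie}(N,\kring)\cong H^*(\glie;N^\sharp)$ via the Hopf-algebra untwisting argument, which the paper leaves implicit --- but the underlying argument is the same.
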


\begin{rem}
There are duality relations between the above statements (cf. Corollary \ref{cor:duality_ext}). Moreover, there are related duality  {\em isomorphisms} when $\g$ is finite-dimensional and one restricts to working in the derived category of finite-dimensional modules (cf. Remark \ref{rem:duality_not_iso}).
\end{rem}

\begin{exam}
Let $\g$ be a Leibniz algebra over a field of characteristic zero  such that $\glie$ is semisimple; thus Whitehead's lemmas imply that $H^1 (\glie ; N) = H^2(\glie; N) =0$ for any finite-dimensional $\glie$-module $N$. Moreover, Weyl's theorem implies that the category of finite-dimensional $\glie$-modules is semisimple.

Thus, in cohomological dimension $\leq 2$, Theorem \ref{thm:reduction} and its Corollary \ref{cor:reduction_one_trivial} give generalizations of the results in \cite{MR1383474} and \cite{MW}.
\end{exam}

\section{The case of $\tor$}
\label{sect:tor}

This Section explains the counterparts of the  splitting results of Section \ref{sect:ext} for $\tor$. Throughout, $M$ will be taken to be a right $\glie$-module and $N$ a left $\glie$-module. Adapting Notation \ref{nota:kurd} to $U(\glie)$-modules using the conjugation $\chi$, one has the associated left $\glie$-module $M\kurd$ and the right $\glie$-module ${\kurd}N$. 

The analogue of considering both coefficients to be symmetric or both coefficients to be antisymmetric is given 
by  the following bifunctors of $M$ and $N$: 
\begin{eqnarray*}
M^{d_1} \otimes_{UL(\g)} {}^{d_1} N \\
M^{d_0} \otimes_{UL(\g)} {}^{d_0} N 
\end{eqnarray*}
together with their derived functors, which give the homological analogues of those considered in Theorem \ref{thm:reduction}. In this context, the Kurdiani involution allows reduction to the first case, as established below by Proposition \ref{prop:kurd_tor}.

This relies upon the  following Lemma, which is closely related to Corollary \ref{cor:left_right_Uglie} and is proved using Proposition \ref{prop:compat_chiL}:

\begin{lem}
\label{lem:kurd_glie}
For $N$ a left $\glie$-module, there are natural isomorphisms of right $UL(\g)$-modules:
\begin{eqnarray*}
{}\kurd({}^{d_1} N) &\cong & ({}\kurd N) ^{d_0} \\
{}\kurd({}^{d_0} N) &\cong & ({}\kurd N) ^{d_1}.
\end{eqnarray*}
\end{lem}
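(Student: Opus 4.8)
The plan is to deduce Lemma \ref{lem:kurd_glie} directly from the commutative diagram of Proposition \ref{prop:compat_chiL}, which is precisely the compatibility statement that relates $\chi_L$ to $\chi$ via $s_0$, $d_0$ and $d_1$. Unwinding the definitions, for a left $\glie$-module $N$, the right $UL(\g)$-module ${}\kurd({}^{d_1}N)$ has underlying vector space $N$ with $a \in UL(\g)$ acting on the right by $\chi_L(a)$ acting on the left, which in turn is $d_1(\chi_L(a))$ acting on the left via the left $\glie$-structure (i.e.\ via $d_1 \circ \chi_L : UL(\g)\op \to U(\glie)$). On the other hand, $({}\kurd N)^{d_0}$ is the right $U(\glie)$-module ${}\kurd N$ restricted along $d_0$; and ${}\kurd N$ is $N$ with $b \in U(\glie)$ acting on the right by $\chi(b)$ acting on the left, i.e.\ via $\chi : U(\glie)\op \to U(\glie)$. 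So $({}\kurd N)^{d_0}$ has $a$ acting via $\chi \circ d_0\op$, i.e.\ via $\chi(d_0(a))$ acting on the left. The square $d_1 \circ \chi_L = \chi \circ d_0\op$ from Proposition \ref{prop:compat_chiL} (reading off the appropriate composite around the right-hand square) then shows these two right $UL(\g)$-module structures on $N$ coincide, so the identity map on $N$ is the desired isomorphism. The second isomorphism is obtained symmetrically, using instead the identity $d_0 \circ \chi_L = \chi \circ d_1\op$ extracted from the same diagram (together with the fact that $\chi_L$ and $\chi$ are involutions, so the diagram may be read in either direction).

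First I would set up the notation carefully, recalling Notation \ref{nota:kurd} and the convention that $\chi$ plays the rôle for $U(\glie)$-modules that $\chi_L$ plays for $UL(\g)$-modules, so that ${}\kurd N$ denotes the right $U(\glie)$-module obtained from the left $\glie$-module $N$ by restriction along $\chi$. Then I would write out, as above, the two right $UL(\g)$-module structures on the vector space $N$ that are being compared, expressing each as "restriction along a composite $UL(\g)\op \to U(\glie)$ followed by the left action". The point is purely formal once both sides are written in this uniform way: an isomorphism of modules obtained by restriction along two algebra maps is simply the identity on the underlying module, provided the two algebra maps agree. The naturality in $N$ is then automatic, since the identity map is evidently natural.

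The main — indeed the only — subtlety is bookkeeping with opposite algebras and the direction of the arrows in Proposition \ref{prop:compat_chiL}: one must be careful that $d_1\op : UL(\g)\op \to U(\glie)\op$ composed with $\chi : U(\glie)\op \to U(\glie)$ is what appears, and that this equals $d_0 \circ \chi_L$ read as a map $UL(\g)\op \to U(\glie)$, so the relevant commuting square is the right-hand one in the diagram of Proposition \ref{prop:compat_chiL}. Because $\chi$ and $\chi_L$ are involutive anti-automorphisms, one can pass freely between the two readings, but it is worth stating explicitly which composite equals which to avoid a sign or variance error. No genuine computation is needed beyond this; the substance is entirely contained in Proposition \ref{prop:compat_chiL}, which is why the lemma is flagged as "closely related to Corollary \ref{cor:left_right_Uglie}" — indeed it is the module-theoretic shadow of that corollary, with $U(\glie)$ replaced by an arbitrary left $\glie$-module $N$.
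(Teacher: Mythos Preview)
Your proposal is correct and is precisely the argument the paper intends: the paper does not give an explicit proof of this lemma, merely stating that it ``is proved using Proposition \ref{prop:compat_chiL}'', and your unwinding of the restriction-along-composites argument is exactly the verification required. The only point to tidy is the bookkeeping you yourself flag: the right-hand square of Proposition \ref{prop:compat_chiL} literally reads $\chi \circ d_1\op = d_0 \circ \chi_L$, and the identity $d_1 \circ \chi_L = \chi \circ d_0$ you use follows from this upon invoking that $\chi$ and $\chi_L$ are involutions (as you note).
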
 

\begin{rem}
As in Section \ref{sect:sym_asym}, the right $UL(\g)$-module $M^{d_1}$ identifies with $M\leibsym$. By  Lemma \ref{lem:kurd_glie}, 
there is a natural isomorphism 
\[
{}^{d_1} N
\cong 
\big( ({}\kurd N)\leibasym \big)\kurd.
\]
When working with both left and right $UL(\g)$-modules, the notation $(-)^{d_1}$ and ${}^{d_1} (-)$ is more appropriate.
\end{rem}

\begin{prop}
\label{prop:kurd_tor}
The Kurdiani involution induces  natural isomorphisms
\begin{eqnarray*}
M^{d_1} \otimes _{UL(\g)} {}^{d_1} N 
&\cong &
({}\kurd N)^{d_0} \otimes_{UL(\g)} {}^{d_0} (M\kurd)
\\
\tor^{UL(\g)} _* (M^{d_1} , {}^{d_1} N) 
&\cong& 
\tor^{UL(\g)} _* ( ( {}\kurd N)^{d_0} , {}^{d_0} (M\kurd)).
\end{eqnarray*}
\end{prop}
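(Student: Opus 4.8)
The plan is to deduce Proposition \ref{prop:kurd_tor} directly from the Kurdiani involution together with Lemma \ref{lem:kurd_glie}. The key observation is that the involutive anti-automorphism $\chi_L : UL(\g)\op \stackrel{\cong}{\rightarrow} UL(\g)$ of Proposition \ref{prop:involution} induces an equivalence between right $UL(\g)$-modules and left $UL(\g)$-modules, hence an isomorphism on tensor products: for any right $UL(\g)$-module $P$ and any left $UL(\g)$-module $Q$, there is a natural isomorphism $P \otimes_{UL(\g)} Q \cong ({}\kurd Q) \otimes_{UL(\g)} (P\kurd)$, since restriction along $\chi_L$ simply interchanges the two sides of the tensor product (compatibly with the $UL(\g)$-bimodule bar resolution, say). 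This isomorphism is natural in both variables.

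First I would apply this general isomorphism with $P = M^{d_1}$ and $Q = {}^{d_1} N$, giving
\[
M^{d_1} \otimes_{UL(\g)} {}^{d_1} N
\cong
{}\kurd({}^{d_1} N) \otimes_{UL(\g)} (M^{d_1})\kurd.
\]
Then I would rewrite each of the two factors on the right. By Lemma \ref{lem:kurd_glie}, ${}\kurd({}^{d_1} N) \cong ({}\kurd N)^{d_0}$ as right $UL(\g)$-modules. For the second factor, note that $M^{d_1}$ is $M$ viewed as a right $UL(\g)$-module via $d_1$, so $(M^{d_1})\kurd$ is $M$ viewed as a left $UL(\g)$-module via $\chi_L \circ d_1\op$; by Proposition \ref{prop:compat_chiL} one has $\chi_L \circ d_1\op = d_0 \circ \chi$ (reading the commutative square), which on modules gives $(M^{d_1})\kurd \cong {}^{d_0}(M\kurd)$, where $M\kurd$ is $M$ regarded as a left $\glie$-module via $\chi$. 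Substituting these two identifications yields the first displayed isomorphism. The second follows by deriving: since $\tor^{UL(\g)}_*$ is the derived functor of $-\otimes_{UL(\g)}-$ and the isomorphism of bifunctors above is induced by an exact equivalence of module categories (restriction along the anti-automorphism $\chi_L$), it sends projective resolutions to projective resolutions, hence passes to an isomorphism on all higher $\tor$.

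I expect the main (though modest) obstacle to be the careful bookkeeping of variances: one must check that restriction along an \emph{anti}-automorphism turns a right module into a left module consistently on both tensor factors, and that the resulting interchange isomorphism on $P\otimes_{UL(\g)} Q$ is the one compatible with the bar resolution, so that it genuinely derives. The other delicate point is correctly reading off $\chi_L \circ d_1\op = d_0 \circ \chi$ from the diagram in Proposition \ref{prop:compat_chiL} and tracking that this is exactly the identification $(M^{d_1})\kurd \cong {}^{d_0}(M\kurd)$ rather than its variant with $d_0, d_1$ swapped. Once these compatibilities are in place, the argument is formal.
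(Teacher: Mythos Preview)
Your proposal is correct and follows essentially the same approach as the paper. The paper's proof is terse---it says only that the first isomorphism is a consequence of Lemma \ref{lem:kurd_glie} and the second follows on passage to derived functors---and your write-up is simply a faithful expansion of this: you make explicit the general swap isomorphism $P\otimes_{UL(\g)}Q\cong({}\kurd Q)\otimes_{UL(\g)}(P\kurd)$ coming from the anti-automorphism, then identify the two factors via Lemma \ref{lem:kurd_glie} (and its companion for $(M^{d_1})\kurd$, read off from Proposition \ref{prop:compat_chiL}), and finally derive.
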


\begin{proof}
The first isomorphism is a consequence of Lemma \ref{lem:kurd_glie} and the second follows on passage to derived functors.
\end{proof}

\begin{rem}
Proposition \ref{prop:kurd_tor} does not have a precise counterpart when considering $\ext$. In the latter context one has to use the duality functor $(-)^\sharp$ in place of the equivalence of categories arising from the Kurdiani involution. This is not an equivalence of categories and the induced morphism on $\ext$  (cf. Corollary \ref{cor:duality_ext}) is not in general an isomorphism.
\end{rem}

\begin{thm}
\label{thm:tor}
For $\g$ a Leibniz algebra, $M$ a right $\glie$-module and $N$ a left $\glie$-module, there is a natural isomorphism:
\[
\tor^{UL(\g)}_* (M^{d_1}, {}^{d_1}N) 
\cong 
\tor^{U(\glie)}_* (M, N)
\oplus 
 \tor^{UL(\g)}_{*-1} ((M\otimes \g\rreg)^{d_0}, {}^{d_1}N) 
\] 
where the morphism $\tor^{UL(\g)}_* (M^{d_1}, {}^{d_1}N)  \rightarrow \tor^{UL(\g)}_{*-1} ((M\otimes \g\rreg)^{d_0}, {}^{d_1}N) $ is induced by cap product with the class $[\lext (M)]$.
\end{thm}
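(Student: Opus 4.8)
The plan is to mirror the proof of Theorem \ref{thm:reduction}(1) (equivalently Proposition \ref{prop:alternative_lext_rext}(1)) on the homological side, replacing the functor $\ext^*_{\leibmod_\g}(-, N_2\leibsym)$ by $-\otimes_{UL(\g)} {}^{d_1}N$ and reversing all the arrows. First I would observe that, by the adjunction between induction and restriction (Proposition \ref{prop:induct_coinduct_adjoints}) applied to $d_1 : UL(\g) \rightarrow U(\glie)$, for any $\g$-module $A$ there is a natural isomorphism $A^{d_1} \otimes_{UL(\g)} {}^{d_1}N \cong (\lbs A) \otimes_{U(\glie)} N$, and more generally, by composing derived functors, a homological Grothendieck spectral sequence
\[
\tor^{U(\glie)}_p(\mathbb{L}_q\lbs A, N) \Rightarrow \tor^{UL(\g)}_{p+q}(A^{d_1}, {}^{d_1}N).
\]
Taking $A = \lext(M)$ and invoking Proposition \ref{prop:lext_deriv_lbs}, which gives $\mathbb{L}_q\lbs \lext(M) = 0$ for $q > 0$ and $\cong M$ for $q = 0$, the spectral sequence collapses, yielding the natural isomorphism
\[
\lext(M)^{d_1} \otimes_{UL(\g)} {}^{d_1}N \text{-derived} \ \cong \ \tor^{U(\glie)}_*(M, N).
\]

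Next I would feed the defining short exact sequence of $\g$-modules $0 \rightarrow (M\otimes \g\rreg)\leibasym \rightarrow \lext(M) \rightarrow M\leibsym \rightarrow 0$ from Proposition \ref{prop:LP_ses}(2) into $-\otimes_{UL(\g)} {}^{d_1}N$. Since $N$ is a field-vector space and we are over a field, this produces a long exact sequence
\[
\cdots \rightarrow \tor^{UL(\g)}_*((M\otimes\g\rreg)\leibasym \,{}^{d_0}?, {}^{d_1}N) \rightarrow \tor^{UL(\g)}_*(M^{d_1}, {}^{d_1}N) \rightarrow \tor^{UL(\g)}_*(\lext(M)^{d_1}, {}^{d_1}N) \rightarrow \cdots
\]
where I must be careful that the restriction of the submodule $(M\otimes\g\rreg)\leibasym$ along the inclusion into $\lext(M)$ is its $UL(\g)$-module structure coming from $d_0$ (it is antisymmetric), so the first term is $\tor^{UL(\g)}_*((M\otimes\g\rreg)^{d_0}, {}^{d_1}N)$ as in the statement. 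Combining with the collapse result above, the boundary map $\tor^{UL(\g)}_*(\lext(M)^{d_1}, {}^{d_1}N) = \tor^{U(\glie)}_*(M,N) \rightarrow \tor^{UL(\g)}_{*-1}((M\otimes\g\rreg)^{d_0}, {}^{d_1}N)$ in the long exact sequence must be shown to vanish; the cleanest way is to exhibit a splitting of the edge map using the canonical map $M^{d_1} \rightarrow \lext(M)^{d_1}$ (dual to $\lext(M)\twoheadrightarrow M\leibsym$), which on applying $\mathbb{L}\lbs$ recovers the identity of $M$ by Proposition \ref{prop:lext_deriv_lbs}. This forces the long exact sequence to break into short exact sequences $0 \rightarrow \tor^{UL(\g)}_{*-1}((M\otimes\g\rreg)^{d_0}, {}^{d_1}N) \rightarrow \tor^{UL(\g)}_*(M^{d_1},{}^{d_1}N) \rightarrow \tor^{U(\glie)}_*(M,N) \rightarrow 0$, split by the retraction, giving the direct sum decomposition.

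Finally I would identify the inclusion $\tor^{UL(\g)}_{*-1}((M\otimes\g\rreg)^{d_0}, {}^{d_1}N) \hookrightarrow \tor^{UL(\g)}_*(M^{d_1}, {}^{d_1}N)$ with cap product against $[\lext(M)] \in \ext^1_{\leibmod_\g}(M\leibsym, (M\otimes\g\rreg)\leibasym)$. This is the standard compatibility: the connecting homomorphism in the long $\tor$-sequence associated to a short exact sequence of modules coincides, up to sign, with the cap product (Yoneda composition) against the class of that extension, so the boundary map from the sequence above is precisely cap product with $[\lext(M)]$; naturality is then immediate from the naturality statement in Corollary \ref{cor:ext_classes}. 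The main obstacle I anticipate is the bookkeeping of which restriction structure ($d_0$ versus $d_1$) each factor carries at each stage — in particular, checking that the submodule $(M\otimes\g\rreg)\leibasym \subset \lext(M)$ really does inherit the $d_0$-structure and that the spectral-sequence / long-exact-sequence comparison respects these identifications — together with pinning down the sign and the precise form of the cap-product identification; the homological algebra itself (spectral sequence collapse, splitting) is routine once Proposition \ref{prop:lext_deriv_lbs} is in hand.
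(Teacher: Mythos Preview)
Your strategy is precisely the paper's: the proof there is literally ``analogous to that of Theorem \ref{thm:reduction}, \emph{mutatis mutandis}'', i.e.\ collapse the Grothendieck spectral sequence for $\mathbb{L}\lbs$ at $\lext(M)$ via Proposition \ref{prop:lext_deriv_lbs}, then read off the splitting from the long exact $\tor$-sequence attached to the extension defining $\lext(M)$.

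There is, however, a covariance slip running through your middle paragraph that garbles the identifications. Since $\tor$ is covariant in the first variable, the long exact sequence reads
\[
\cdots \to \tor^{UL(\g)}_*(\lext(M), {}^{d_1}N) \to \tor^{UL(\g)}_*(M^{d_1}, {}^{d_1}N) \stackrel{\partial}{\longrightarrow} \tor^{UL(\g)}_{*-1}((M\otimes\g\rreg)^{d_0}, {}^{d_1}N) \to \cdots,
\]
not as you wrote it; the connecting map $\partial$ (cap product with $[\lext(M)]$) is therefore the \emph{surjection} from degree $*$ to degree $*-1$, exactly as the theorem states, not an inclusion. The split short exact sequence is
\[
0 \to \tor^{U(\glie)}_*(M,N) \to \tor^{UL(\g)}_*(M^{d_1}, {}^{d_1}N) \stackrel{\partial}{\longrightarrow} \tor^{UL(\g)}_{*-1}((M\otimes\g\rreg)^{d_0}, {}^{d_1}N) \to 0,
\]
and the splitting is furnished by the edge map $\tor^{UL(\g)}_*(M^{d_1}, {}^{d_1}N) \to \tor^{U(\glie)}_*(\lbs(M^{d_1}),N) = \tor^{U(\glie)}_*(M,N)$ (equivalently, restriction along $s_0$). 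There is no $\g$-module map $M^{d_1} \to \lext(M)$ splitting the projection, since $[\lext(M)]$ is generally nonzero; the retraction lives only at the level of $\mathbb{L}\lbs$. Finally, a notational point: $\lext(M)$ is already a right $UL(\g)$-module, so writing $\lext(M)^{d_1}$ or $A^{d_1}$ for a $\g$-module $A$ is spurious.
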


\begin{proof}
The proof is analogous to that of Theorem \ref{thm:reduction}, {\em mutatis mutandis}.
\end{proof}

\begin{rem}
Theorem \ref{thm:tor} gives a generalization of Example \ref{exam:HL_hom_splitting} to $\tor$ in the same way that Theorem \ref{thm:reduction} generalizes Example \ref{exam:splitting_HL_cohom_antisymmetric} to $\ext$.
\end{rem}



\providecommand{\bysame}{\leavevmode\hbox to3em{\hrulefill}\thinspace}
\providecommand{\MR}{\relax\ifhmode\unskip\space\fi MR }
\providecommand{\MRhref}[2]{%
  \href{http://www.ams.org/mathscinet-getitem?mr=#1}{#2}
}
\providecommand{\href}[2]{#2}

\end{document}